\documentclass[11pt]{amsart}

\usepackage[usenames]{xcolor}
\usepackage{graphicx}
\usepackage{hyperref}
\usepackage{subfig}
\hypersetup{
    linktocpage=true,
    colorlinks=true,
    linkcolor = blue,
    citecolor = purple,
}

\usepackage{amsthm}
\usepackage[centering,margin=1in]{geometry}

\def \integers{\mathbb{Z}}
\def \reals{\mathbb{R}}

\usepackage[numbers]{natbib}
\usepackage{amsmath,amssymb,enumerate}
\usepackage[all]{xy}
\usepackage{mathtools}
\usepackage{ytableau}
\ytableausetup{boxsize=2.4ex,centertableaux}

\theoremstyle{plain} 
\newtheorem{theorem}{Theorem}[section]
\newtheorem{lemma}[theorem]{Lemma}
\newtheorem{proposition}[theorem]{Proposition}
\newtheorem{corollary}[theorem]{Corollary}
\newtheorem{conjecture}[theorem]{Conjecture}

\theoremstyle{definition}
\newtheorem{example}[theorem]{Example}
\newtheorem{definition}[theorem]{Definition}

\theoremstyle{remark}
\newtheorem{remark}[theorem]{Remark}
\newtheorem{remarks}[theorem]{Remarks}

%
\newlength{\cellsize}
\cellsize=2.5ex


\newcommand\tableau[1]{
\vcenter{
\let\\=\cr
\baselineskip=-16000pt
\lineskiplimit=16000pt
\lineskip=0pt
\halign{&\tableaucell{##}\cr#1\crcr}}}


\newcommand{\tableaucell}[1]{{%
\def \arg{#1}\def \void{}%

\ifx \void \arg
\vbox to \cellsize{\vfil \hrule width \cellsize height 0pt}%
\else
\unitlength=\cellsize
\begin{picture}(1,1)
\put(0,0){\makebox(1,1){$#1$}}
\put(0,0){\line(1,0){1}}
\put(0,1){\line(1,0){1}}
\put(0,0){\line(0,1){1}}
\put(1,0){\line(0,1){1}}
\end{picture}%
\fi}}
%

\newcommand{\ct}{\mathrm{ct}}

\newcommand{\inner}[2]{
\langle #1, #2 \rangle 
}

\renewcommand{\l}[1] {
l^{J}_{#1}
}

\newcommand{\qstep}{
\lhd
}

\def \A{\mathcal{A}}
\def \B{B^{\otimes \lambda} }
\def \sgn{\mathrm{sgn}}
\def \sign{\mathrm{sign}}
\def \R{\mathbb{R}}
\def \Z{\mathbb{Z}}
\def \hh{\mathfrak{h}}
\def \g{\mathfrak{g}}
\def \wt{\mathrm{wt}}
\def \charge{ \mathrm{charge} }
\def \height{\mathrm{height} }
\def \htroot{\mathrm{ht}}
\def \fill{\mathrm{fill}}
\def \sfill{\mathrm{sfill}}
%

\def \Bzero{\mathbf{0}}

\begin{document}

\title[Quantum alcove model]{A generalization of the alcove model and its applications} 
\author{Cristian Lenart} 
\address{Department of Mathematics and Statistics, State University of New York at Albany,
Albany, NY 12222, USA}
\email{clenart@albany.edu}
\author{Arthur Lubovsky}
\address{Department of Mathematics and Statistics, State University of New York at Albany,
Albany, NY 12222, USA}
\email{alubovsky@albany.edu}
\thanks{Both authors were partially supported by the NSF grant DMS--1101264. The first author gratefully acknowledges the hospitality of the Max-Planck-Institut f\"ur Mathematik in Bonn, where part of this work was carried out.}
\subjclass[2010]{Primary 05E10. Secondary 20G42.}
\keywords{Kirillov-Reshetikhin crystals, energy function, alcove model, quantum Bruhat graph, Kashiwara-Nakashima columns}

\begin{abstract}

    The alcove model of the first author and A. Postnikov uniformly 
    describes highest weight crystals of semisimple Lie algebras.  
    We construct a generalization, called the quantum alcove model. In joint work of the first author with S. Naito, D. Sagaki, A. Schilling, and M. Shimozono, this was shown to uniformly describe tensor products of column shape Kirillov-Reshetikhin crystals in all untwisted affine types; moreover, an efficient formula for the corresponding energy function is available. In
    the second part of this paper, we specialize the quantum alcove model to types $A$ and $C$. We give explicit affine crystal isomorphisms from the specialized quantum
    alcove model to the corresponding tensor products of column shape
    Kirillov-Reshetikhin crystals, which are realized in terms of Kashiwara-Nakashima columns. 
   

\end{abstract}
\maketitle

\section{Introduction}

Kashiwara's {\em crystals} \cite{kascbq} are colored directed graphs encoding the structure of certain bases (called crystal bases) for certain representations of quantum groups $U_q({\mathfrak g})$ as $q$ goes to zero. The first author and A. Postnikov \cite{lapawg,lapcmc} defined the so-called {\em alcove model} for highest weight crystals associated to a semisimple Lie algebra $\mathfrak g$ (in fact, the model was defined more generally, for symmetrizable Kac-Moody algebras $\mathfrak g$). A related model is the one of Gaussent-Littelmann, based on {\em LS-galleries} \cite{gallsg}. Both models are discrete counterparts of the celebrated {\em Littelmann path model} \cite{litlrr,litpro}.

In this paper we construct a generalization of the alcove model, which we
call the \emph{quantum alcove model}, as it is based on enumerating paths in
the so-called {\em quantum Bruhat graph} of the corresponding finite Weyl
group. This graph originates in the quantum
cohomology theory for flag varieties \cite{fawqps}, and was first studied in \cite{bfpmbo}. The path enumeration is
determined by the choice of a certain sequence of alcoves (an alcove
path or, equivalently, a $\lambda$-chain of roots), like in the classical alcove model. If we restrict to paths in
the Hasse diagram of the Bruhat order, we recover the classical alcove model. The
mentioned paths in the quantum Bruhat graph first appeared in
\cite{Lenart}, where they index the terms in the specialization $t=0$
of the Ram-Yip formula \cite{raycfm} for {\em Macdonald polynomials} $P_{\lambda}(X;q,t)$.
We construct combinatorial crystal operators on the mentioned paths, and prove various properties of them. 


The main application \citep{unialcmod,unialcmod2}
is that the new model uniformly describes tensor
products of column shape {\em Kirillov-Reshetikhin (KR) crystals}
\citep{karrym}, for all untwisted affine types. (KR crystals correspond to certain finite-dimensional  representations of affine algebras.) More precisely, the model realizes the crystal operators on the mentioned tensor product, and also gives an efficient formula (based on the so-called height statistic) for the corresponding {\em energy function} \cite{hkorff}. (The energy  can be viewed as an affine grading on a tensor product of KR crystals \cite{NS08,KRcrystals_energy}.) This result, combined with the Ram-Yip formula for Macdonald polynomials \cite{raycfm}, implies that the graded character of a tensor product of column shape KR modules (the grading being by the energy function) concides with the corresponding Macdonald polynomial specialized at $t=0$ \cite{unialcmod2}. 

In the second part of this paper, we specialize the quantum alcove model to types $A$ and $C$, and prove that the bijections
constructed in \citep{Lenart}, from the objects of the specialized quantum alcove
model to the tensor products of the corresponding {\em Kashiwara-Nakashima (KN) columns}
\citep{kancgr}, are affine crystal isomorphisms. (A column shape KR crystal is realized by a KN column in these cases.) Note that this result has no overlap with the type-independent result in \citep{unialcmod,unialcmod2}, because the $\lambda$-chains on which the quantum alcove model is based in the two cases are different. Moreover, note that having such explicit bijections to  models based on diagram fillings, which are known to be crystal isomorphisms, is important for the following reason. It is often easier to define extra structure on the quantum alcove model, which describes additional structure of the crystal, and only then translate it to the models based on fillings via the bijection. One such example was mentioned above, in connection with the energy function; the height statistic in the quantum alcove model has been translated to the so-called {\em charge} statistic for fillings in types $A$ and $C$ in \cite{Lenart}, while type $B$ is currently under investigation in \cite{balcst}. Another example is related to the combinatorial $R$-matrix mentioned below. 

We also conjecture that, like the alcove model, its generalization given here is independent of the $\lambda$-chain of roots on which the whole construction is based, cf. \cite{lenccg}. This conjecture is currently under investigation in \cite{lalurc}. We intend to realize an affine crystal isomorphism between the models based on two $\lambda$-chains by extending to the quantum
alcove model the alcove model version of Sch\"utzenberger's {\em jeu
    de taquin} \cite{fulyt} on Young tableaux in \cite{lenccg}; the latter is based
on so-called {\em Yang-Baxter moves}.
Another application of this construction would be a uniform realization of the  {\em
    combinatorial $R$-matrix} (i.e., the unique affine crystal isomorphism
commuting factors in a tensor product of KR crystals).

\section{Background}

Let $\mathfrak{g}$ be a complex semisimple Lie algebra, and $\mathfrak{h}$ a Cartan subalgebra, whose rank is $r$. 
Let $\Phi \subset \mathfrak{h}^*$ be the corresponding irreducible \emph{root system}, 
$\mathfrak{h}^*_{\reals}\subset \mathfrak{h}$ the real span of the roots, and $\Phi^{+} \subset \Phi$ the set of positive roots.
Let $\Phi^{-} := \Phi \backslash \Phi^{+}$.
For $ \alpha \in \Phi$ we will say that $ \alpha > 0 $ if $ \alpha \in \Phi^{+}$,
and $ \alpha < 0 $ if $ \alpha \in \Phi^{-}$.
The sign of the root $\alpha$, denoted $\sgn(\alpha)$, is defined to be $1$ if $\alpha \in \Phi^{+}$, and $-1$ otherwise.
Let $| \alpha | = \sgn( \alpha ) \alpha $.
Let $\rho := \frac{1}{2}(\sum_{\alpha \in \Phi^{+}}\alpha)$. Let 
$\alpha_1, \ldots , \alpha_r \in \Phi^{+}$ be the corresponding \emph{simple roots}, and $s_i:=s_{\alpha_i}$ the corresponding simple reflections. We denote 
$\inner{\cdot}{\cdot}$ the nondegenerate scalar product on 
$\mathfrak{h}^{*}_{\reals}$ induced by the Killing form. Given a root $\alpha$, we consider the corresponding \emph{coroot} $\alpha^{\vee} := 2\alpha/ \inner{\alpha}{\alpha}$ and reflection
$s_{\alpha}$.
If $\alpha= \sum_i c_i \alpha_i$, then the \emph{height} of $\alpha$,
denoted by $\htroot(\alpha)$, is given by 
$\htroot(\alpha):=\sum_i c_i$. 
We denote by $\widetilde{\alpha}$ the highest root in $\Phi^{+}$; we let 
$\theta = \alpha_0 := -\widetilde{\alpha} $ and $s_0:=s_{\widetilde{\alpha}}$.

Let $W$ be the corresponding \emph{Weyl group}. The length function on $W$ is denoted by $\ell(\cdot)$. The \emph{Bruhat order} on $W$ is defined by its covers $w \lessdot ws_{\alpha}$, for $\alpha \in \Phi^{+}$,
if $\ell(ws_{\alpha}) = \ell(w) + 1$. 
Define 
$w \qstep ws_{\alpha}$, for $\alpha \in \Phi^{+}$, if 
$\ell(ws_{\alpha}) =  \ell(w) - 2\htroot( \alpha^\vee )  + 1$.
The \emph{quantum Bruhat graph} \cite{fawqps} is the directed graph on $W$ with edges labeled by positive roots
\begin{equation}
	w \stackrel{\alpha}{\longrightarrow} 
	ws_{\alpha} \quad 
	\text{ for  } 
	w \lessdot ws_{\alpha} \,\mbox{ or }\, w \qstep ws_{\alpha}\,;
	\label{eqn:qbruhat_edge}
\end{equation}
see Example \ref{qbgex}. 

The \emph{weight lattice} $\Lambda$ is given by 
\begin{equation}
	\Lambda := \left\{ \lambda \in \mathfrak{h}_{\reals}^{*} \, : \,
	\inner{\lambda}{\alpha^{\vee}} \in \integers \text{ for any } \alpha \in \Phi \right\}.
	\label{eqn:weight_lattice}
\end{equation}
The weight lattice $\Lambda$ is generated by the \emph{fundamental weights} 
$\omega_1, \ldots \omega_r$, which form the dual basis to the basis of simple coroots, i.e., 
$\inner{\omega_i}{\alpha_j^{\vee}}= \delta_{ij}$. The set $\Lambda^{+}$ of \emph{dominant weights}
is given by 
\begin{equation}
	\Lambda^{+} := \left\{  \lambda \in \Lambda \, : \, 
	\inner{\lambda}{\alpha^{\vee}} \geq 0 \text{ for any } \alpha \in \Phi^{+}
	\right\}.
	\label{eqn:dominant_weights}
\end{equation}

Given $\alpha \in \Phi$ and $k \in \integers$, we denote by $s_{\alpha,k}$ the reflection in the affine hyperplane
\begin{equation}
	H_{\alpha,k}:= \left\{  \lambda \in \mathfrak{h}^{*}_{\reals} \, : \, 
	\inner{\lambda}{\alpha^{\vee}} = k \right\}
	\label{eqn:affine_hyperplane}.
\end{equation}
These reflections generate the \emph{affine Weyl group} $W_{\textrm{aff}}$ for the 
\emph{dual root system} $\Phi^{\vee}:= \left\{ \alpha^{\vee} \, |\, \alpha \in \Phi \right\}$.
The hyperplanes $H_{\alpha,k}$ divide the real vector space $\mathfrak{h}^{*}_\reals$ into open regions, called \emph{alcoves.} The \emph{fundamental alcove} $A_{\circ}$ is given by
\begin{equation}
	A_{\circ} := \left\{ \lambda \in \mathfrak{h}_{\reals}^{*} \, | \, 
	0 < \inner{\lambda}{\alpha^{\vee}} < 1 \text{ for all } \alpha \in \Phi^{+}
	\right\}.
	\label{eqn:fundamental_alcove}
\end{equation}

We will need the following properties of the quantum Bruhat graph which were proved in \cite{unialcmod}; more precisely, Lemma \ref{lemma:theta} below is a simplified version of Proposition 5.4.2 in the mentioned paper, whereas Lemmas \ref{prop:deodhar} and \ref{prop:deodhar0} below are simplified versions of different parts of the Diamond Lemma 5.5.2.
\begin{lemma}
	Let $w \in W$. We have $w^{-1}(\theta)>0$ if and only if $w \qstep s_{\theta}w$. 
	We also have $w^{-1}(\theta) < 0$ if and only if $s_{\theta}w \qstep w$.
	\label{lemma:theta}
\end{lemma}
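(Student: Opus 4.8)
The plan is to translate both equivalences into a single statement about the length drop along the reflection $s_\beta$, where $\beta := |w^{-1}(\theta)| \in \Phi^+$. Since $w^{-1}s_\theta w = s_{w^{-1}(\theta)} = s_\beta$, we always have $s_\theta w = w s_\beta$, so the putative edge $w \qstep s_\theta w$ is precisely a quantum edge $w \qstep w s_\beta$, which by definition requires $\ell(ws_\beta) = \ell(w) - 2\langle\rho,\beta^\vee\rangle + 1 < \ell(w)$, i.e.\ $w(\beta) < 0$. Because $\theta = -\widetilde{\alpha}$ lies in the $W$-orbit of $\widetilde{\alpha}$ and $w$ is an isometry, $\beta$ is a long root. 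First I would dispose of the easy direction: if $w^{-1}(\theta) < 0$ then $\beta = -w^{-1}(\theta)$ and $w(\beta) = \widetilde{\alpha} > 0$, so $\ell(ws_\beta) > \ell(w)$ and no quantum edge $w \qstep s_\theta w$ can exist. I would also reduce the second equivalence to the first: applying the first equivalence to $u := s_\theta w$ and using $(s_\theta w)^{-1}(\theta) = -w^{-1}(\theta)$ together with $s_\theta u = w$, the statement ``$u^{-1}(\theta) > 0 \iff u \qstep s_\theta u$'' becomes exactly ``$w^{-1}(\theta) < 0 \iff s_\theta w \qstep w$''.

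This leaves the core case $w^{-1}(\theta) > 0$, where $\beta = w^{-1}(\theta)$ and $w(\beta) = \theta = -\widetilde{\alpha}$; I must show the length drop is exactly $2\langle\rho,\beta^\vee\rangle - 1$. Writing $N(u) := \{\gamma \in \Phi^+ : u(\gamma) \in \Phi^-\}$, so that $\ell(u) = |N(u)|$, I would first record the inversion-set identity
\begin{equation*}
\ell(w) - \ell(ws_\beta) = 2\,\#\{\epsilon \in N(s_\beta) : w(\epsilon) < 0\} - \ell(s_\beta),
\end{equation*}
obtained by reindexing $N(ws_\beta) = \{\gamma \in \Phi^+ : w(s_\beta\gamma) < 0\}$ through the involution $s_\beta$: the contributions from $\Phi^+\setminus N(s_\beta)$ (permuted by $s_\beta$) cancel against those of $N(w)$, leaving only the terms indexed by $N(s_\beta) = \{\epsilon \in \Phi^+ : s_\beta\epsilon < 0\}$. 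Thus it suffices to prove two facts: (A) $N(s_\beta) \subseteq N(w)$, which forces the counting term to equal $\ell(s_\beta)$ and hence the drop to equal $\ell(s_\beta)$; and (B) $\ell(s_\beta) = 2\langle\rho,\beta^\vee\rangle - 1$ for the long root $\beta$.

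For (A), the crucial input is the defining property of the highest root: $\langle\gamma, \widetilde{\alpha}^\vee\rangle \ge 0$ for every $\gamma \in \Phi^+$ (since $\gamma + \widetilde{\alpha} \notin \Phi$). Take $\epsilon \in N(s_\beta)$ and suppose $w(\epsilon) > 0$; put $\gamma := w(\epsilon) > 0$. Using $s_\beta = w^{-1}s_{\widetilde{\alpha}}w$ and $w^{-1}(\widetilde{\alpha}) = -\beta$, I compute $s_\beta(\epsilon) = w^{-1}(\gamma) - \langle\gamma,\widetilde{\alpha}^\vee\rangle\, w^{-1}(\widetilde{\alpha}) = \epsilon + \langle\gamma,\widetilde{\alpha}^\vee\rangle\,\beta$; since $\epsilon, \beta > 0$ and $\langle\gamma,\widetilde{\alpha}^\vee\rangle \ge 0$, this is a positive root, contradicting $\epsilon \in N(s_\beta)$. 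Hence $w(\epsilon) < 0$, proving (A). For (B), I would use that $\beta$ is long, so $c(\gamma) := \langle\gamma,\beta^\vee\rangle \in \{-2,-1,0,1,2\}$ with $|c(\gamma)| = 2$ only for $\gamma = \pm\beta$; then $2\langle\rho,\beta^\vee\rangle = \sum_{\gamma \in \Phi^+} c(\gamma) = 2 + |P_1| - |P_{-1}|$, where $P_{\pm 1} = \{\gamma\in\Phi^+ : c(\gamma) = \pm 1\}$. A $\beta$-string argument (the map $\gamma \mapsto \gamma - \beta$ carries $\{\gamma \in P_1 : \gamma - \beta > 0\}$ bijectively onto $P_{-1}$) shows $N(s_\beta) = \{\beta\} \cup \{\gamma \in P_1 : \gamma - \beta < 0\}$ has size $1 + |P_1| - |P_{-1}| = 2\langle\rho,\beta^\vee\rangle - 1$. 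Combining (A) and (B) gives $\ell(w) - \ell(ws_\beta) = 2\langle\rho,\beta^\vee\rangle - 1$, i.e.\ $w \qstep s_\theta w$, completing the first equivalence. The main obstacle is step (A): everything hinges on exploiting that $w$ carries $\beta$ to the \emph{lowest} root, so that the highest-root inequality $\langle\gamma,\widetilde{\alpha}^\vee\rangle \ge 0$ can be leveraged to force all of $N(s_\beta)$ into $N(w)$.
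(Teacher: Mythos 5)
Your proof is correct and complete, but note that there is nothing in the paper to compare it against: Lemma \ref{lemma:theta} is stated as a quoted result from \cite{unialcmod} and no proof is given here, so your argument serves as a self-contained substitute for that citation. Structurally, it reconstructs the two ingredients that the literature proof (going back to Brenti--Fomin--Postnikov's work on quantum Bruhat operators, and used in \cite{unialcmod}) also relies on: (i) the root $\beta=|w^{-1}(\theta)|$, being in the $W$-orbit of the highest root, is a \emph{quantum root}, i.e.\ $\ell(s_\beta)=2\inner{\rho}{\beta^\vee}-1$; and (ii) when $w^{-1}(\theta)>0$, the length drop $\ell(w)-\ell(ws_\beta)$ equals the maximal possible value $\ell(s_\beta)$. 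I checked your three nontrivial steps and they all hold: the inversion-set identity $\ell(w)-\ell(ws_\beta)=2\#\bigl(N(w)\cap N(s_\beta)\bigr)-\ell(s_\beta)$ is the standard reindexing computation; your step (A) correctly combines the highest-root inequality $\inner{\gamma}{\widetilde{\alpha}^\vee}\ge 0$ for $\gamma\in\Phi^+$ with the identity $s_\beta(\epsilon)=\epsilon+\inner{w(\epsilon)}{\widetilde{\alpha}^\vee}\,\beta$ to force $N(s_\beta)\subseteq N(w)$, which pins the drop at exactly $\ell(s_\beta)$; and your step (B), the $\beta$-string bijection giving $\ell(s_\beta)=1+|P_1|-|P_{-1}|=2\inner{\rho}{\beta^\vee}-1$, invokes the longness of $\beta$ precisely where it is needed, namely to exclude $\inner{\gamma}{\beta^\vee}=\pm 2$ for $\gamma\ne\pm\beta$. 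The two framing reductions are also sound: the case $w^{-1}(\theta)<0$ is killed because then $w(\beta)=\widetilde{\alpha}>0$ makes the length go up, while a quantum edge must strictly decrease length; and the second equivalence follows from the first applied to $u=s_\theta w$, since $u^{-1}(\theta)=-w^{-1}(\theta)$ and $s_\theta u=w$. The one stylistic caveat: your phrase ``which by definition requires $\ell(ws_\beta)=\ell(w)-2\inner{\rho}{\beta^\vee}+1<\ell(w)$, i.e.\ $w(\beta)<0$'' should be read as extracting a necessary condition (the full quantum-edge condition is of course stronger than $w(\beta)<0$); that is exactly how you use it, so no harm is done.
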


\begin{lemma}
	\label{prop:deodhar}
	Let $w \in W$, let $\alpha$ be a simple root, $\beta \in \Phi^+$, and assume $s_{\alpha}w \ne ws_{\beta}$. 
	Then $w \lessdot s_{\alpha}w$ and $ w \longrightarrow ws_{\beta}$ if and only if
	  $ws_{\beta} \lessdot s_{\alpha}ws_{\beta}$ and 
	$s_{\alpha}w \longrightarrow s_{\alpha}ws_{\beta}$, cf. the diagram below. 
	Furthermore, in this context we have $ w \lessdot ws_{\beta}$  if and only if
	$s_{\alpha}w \lessdot s_{\alpha}ws_{\beta}$. 
        	\begin{displaymath}
		\xymatrix{
		& s_{\alpha} ws_{\beta} & \\
		s_{\alpha}w \ar[ru] & & ws_{\beta} \ar[lu] \\
		& w \ar[ru] \ar[lu] &  }
	\end{displaymath}

\end{lemma}
\begin{lemma}
	\label{prop:deodhar0}
	Let $w \in W$,  
	$\beta \in \Phi^+$, 
	and assume $s_{\theta}w \ne ws_{\beta}$. 
	Then $w \qstep s_{\theta}w$ and
	$ w \longrightarrow ws_{\beta}$ if and only if
	  $ws_{\beta} \qstep s_{\theta}ws_{\beta}$ 
	  and 
	$s_{\theta}w \longrightarrow s_{\theta}ws_{\beta}.$ 
\end{lemma}
\subsection{Kirillov-Reshetikhin (KR) crystals}
\label{subsection:KR-crystals}

A $\g$-{\em crystal} (for a symmetrizable Kac-Moody $\g$) is a nonempty set $B$ together with maps $e_i,f_i:B\to B\cup 
\{ \Bzero \}$ for $i\in I$ ($I$ indexes the simple roots, as usual, and $\Bzero \not \in B$), and 
$\wt:B \to \Lambda$. We require $b'=f_i(b)$ if and only if $b=e_i(b')$, and $\wt(f_i(b))=\wt(b)-\alpha_i$. The maps $e_i$ and 
$f_i$ are called {\em crystal operators} 
and are represented as arrows $b \to b'=f_i(b)$ colored $i$; thus they endow $B$ with the structure of a colored directed graph.
For $b\in B$, we set $\varepsilon_i(b) := \max\{k \mid e_i^k(b) \neq \Bzero \}$, and 
$\varphi_i(b) := \max\{k \mid f_i^k(b) \neq \Bzero \}$.
Given two $\g$-crystals $B_1$ and $B_2$, we define their tensor product $B_1 \otimes B_2$ 
as follows.
As a set, $B_1\otimes B_2$ is the Cartesian
product of the two sets. For $b=b_1 \otimes b_2\in B_1 \otimes B_2$, the weight function is simply
$\wt(b) := \wt(b_1) + \wt(b_2)$. 
The crystal operators $f_i$ and $e_i$ are given by
\begin{equation}\label{tens1}
	f_i (b_1 \otimes b_2):=
	\begin{cases}
	f_i  (b_1) \otimes b_2 & \text{if $\varepsilon_i(b_1) \geq \varphi_i(b_2)$}\\
    b_1 \otimes f_i (b_2)  & \text{ otherwise }
	\end{cases}
\end{equation}
\begin{equation}\label{tens2}
	e_i (b_1 \otimes b_2):=
	\begin{cases}
	e_i  (b_1) \otimes b_2 & \text{if $\varepsilon_i(b_1) > \varphi_i(b_2)$}\\
    b_1 \otimes e_i (b_2)  & \text{ otherwise .}
	\end{cases}
\end{equation}
The {\em highest weight crystal} $B(\lambda)$ of highest weight $\lambda\in \Lambda^+$ is a 
certain crystal with a unique element 
$u_\lambda$ such that $e_i(u_\lambda)=\Bzero$ for all $i\in I$ 
and $\wt(u_\lambda)=\lambda$.
It encodes the structure of the crystal basis of the $U_q(\mathfrak{g})$-irreducible 
representation with highest weight $\lambda$ as $q$ goes to 0.

A {\em Kirillov-Reshetikhin (KR) crystal} \cite{karrym} is a finite crystal $B^{r,s}$ for an affine algebra, associated to a rectangle of height $r$ and width $s$, where $r\in I\setminus\{0\}$ and $s$ is any positive integer. We refer, throughout, to the untwisted affine types $A_{n-1}^{(1)}-G_2^{(1)}$. 

We now describe the models based on diagram fillings for KR crystals $B^{r,1}$ of type $A_{n-1}^{(1)}$ 
and $C_n^{(1)}$, 
where
$r\in \{1,2,\ldots,n-1\}$ and $r\in \{1,2,\ldots,n\}$, respectively. 
As a classical type $A_{n-1}$ 
(resp. $C_n$) 
crystal, 
the KR crystal $B^{r,1}$ is isomorphic to the corresponding 
$B(\omega_r)$. Therefore, we can use the corresponding models in terms of fillings, as mentioned below.

In type $A_{n-1}^{(1)}$, an element $b\in B^{r,1}$ is 
represented by a strictly increasing filling 
of a height $r$ column, with entries in $[n]:=\{1, \dots , n \}$. 
We will now describe the crystal operators on a tensor product of type $A_{n-1}^{(1)}$ KR crystals $B^{r,1}$ in terms of the so-called 
signature rule, which is just a translation of the tensor product rules (\ref{tens1})-(\ref{tens2}). 
To apply $f_i$ (or $e_i$) on $b := b_1 \otimes \cdots \otimes b_k$ in 
$B^{i_1,1} \otimes \cdots \otimes B^{i_k,1}$, consider the word with letters
$i$ and $i+1$, if $1\leq i \leq n-1$
(resp., the letters $n$ and $1$, if $i=0$) formed by recording these letters in $b_1,\ldots,b_k$, which are scanned from left to right and bottom to top; we make the convention that if $i=0$ and a column contains both $1$ and $n$, then we discard this column. We replace the letter $i$ with the symbol $+$ and the letter $i+1$ with $-$ (resp., $n$ with $+$ and $1$ with $-$, if $i=0$). 
Then, we remove from our binary word adjacent pairs $-+$, as long as this is possible. At the end of this process, we are left with a word
\begin{equation}
    \rho_i(b) = \underbrace{++ \ldots +}_x\underbrace{-- \ldots -}_y\,,
    \label{eqn:reduced_word}
\end{equation}
called the $i$-signature of $b$.
\begin{definition}
{\rm (1)} If $y > 0$, then $e_i(b)$ is obtained by replacing  in $b$ the letter 
        $i+1$ which corresponds to the leftmost $-$ in
        $\rho_i(b)$ with the letter $i$ (resp., the letter $1$ with $n$, after which we sort the column, if $i=0$). 
        If $y=0$, then $e_i(b) = \Bzero$.

 {\rm (2)} If $x > 0$, then $f_i(b)$ is obtained by replacing  in $b$ the
        letter $i$ which corresponds to the rightmost $+$ in
        $\rho_i(b)$ with the letter $i+1$ (resp., 
        the letter $n$ with $1$, after which we sort the column, if $i=0$).
        If $x=0$, then $f_i(b) =\Bzero$. 
\label{definition:crystal_operators}
\end{definition}

\begin{example}
	\label{example:root0}
	Let $n=3$, 	$b = \tableau{2 & 1 & 1 \\ 3 & 2 }$ $\mapsto$
    $\ytableaushort{2,3} \otimes \ytableaushort{1,2} \otimes
    \ytableaushort{1}$, and has $+--$ as its $0$-signature. So we have $f_0\left(\ytableaushort{211,32}\right) =
    \ytableaushort{111,22}$.
\end{example}

In type $C_{n}^{(1)}$, the elements of $B^{r,1}$ are represented by {\em Kashiwara-Nakashima (KN) columns} 
\citep{kancgr} of height $r$, with
entries in the set $[\overline{n}]:= 
\left\{ 1< \dots< n<\overline{n}<  \dots <\overline{1} \right\}$, which
we will now describe.
\begin{definition}
   A column-strict filling $C = x_1 \ldots x_r$  with entries in
   $[\overline{n}]$ is a KN column if there is no pair $(z,\overline{z})$
       of letters in $C$ such that:
       \[
           z=x_p,\; \overline{z} = x_q,\; q-p \leq r-z.
       \]
       \label{definition:KN_columns}
\end{definition}

Crystal operators $f_i$ and $ e_i$ are defined on tensor products of KN
columns in a similar way to type $A_{n-1}^{(1)}$. 
To apply $f_i$ (or $e_i$) on $b := b_1 \otimes \cdots \otimes b_k$ in 
$B^{i_1,1} \otimes \cdots \otimes B^{i_k,1}$, consider the word with letters
$i, i+1, \overline{\imath}, \overline{i+1}$, if $1\leq i \leq n-1$
(resp., the letters $n$ and $\overline{n}$, if $i=n$, or  $\overline{1}$ and $1$, if $i=0$) formed by recording these letters in $b_1,\ldots,b_k$, which are scanned from from left to right and bottom to top; note that the letters $1$ and $\overline{1}$ cannot simultaneously appear in a column, so we do not need an exception like in type $A_{n-1}^{(1)}$ if $i=0$. We replace the letters $i,\overline{i+1}$ with the symbol $+$ 
and the letters 
$i+1, \overline{\imath}$ with $-$, if $1\leq i \leq n-1$ 
(if $i=n$ we replace $n$ with $+$ and $\overline{n}$ with $-$, and if $i=0$ we replace $\overline{1}$ with $+$ and $1$ with $-$). We proceed like in type $A_{n-1}^{(1)}$ by cancelling adjacent pairs $-+$ as long as possible, and we obtain the $i$-signature $\rho_i(b)$. 
The crystal operators $f_i$ and $e_i$ are again given in terms $\rho_i(b)$, by a similar procedure to the one in Definition $\ref{definition:crystal_operators}$.
Namely, if $1\leq i \leq n-1$, changing $+$ to $-$ means changing $i$ to $i+1$, if
$+$ corresponds to $i$, and changing
$\overline{i+1}$ to $\overline{\imath}$, if $+$ corresponds to
$\overline{i+1}$; similarly changing $-$ to $+$ means changing
$i+1$ to $i$ or $\overline{\imath}$ to $\overline{i+1}$. On another hand, changing $+$ to $-$ means changing $n$ to $\overline{n}$ if $i=n$, and changing $\overline{1}$ to $1$ if $i=0$. 

We will need a different definition of KN columns which was proved to
be equivalent to the one above in \cite{shsjdt}.

\begin{definition}
Let $C$ be column and $I = \left\{ z_1 > \ldots > z_k \right\}$ the
set of 
unbarred letters $z$ such that the pair $(z,\overline{z})$ occurs in
$C$. The column $C$ can be \emph{split} when there exists a set of $k$
unbarred letters $J = \left\{ t_1 > \ldots > t_k \right\} \subset [n]$
such that:
\begin{itemize}
    \item $t_1$ is the greatest letter in $[n]$ satisfying: 
        $t_1 < z_1$, $t_1\not \in C$, and $\overline{t_1} \not \in C$,
    \item  for $i = 2, \ldots, k$, the letter $t_i$ is the greatest
        one in $[n]$ satisfying $t_i < \min(t_{i-1}, z_i)$, 
        $t_i \not \in C$, and $\overline{t_i} \not \in C$.
\end{itemize}

In this case we write:

\begin{itemize}
    \item  $rC$ for the column obtained by changing $\overline{z_i}$
    into $\overline{t_i}$ in $C$ for each letter $z_i \in I$, and by
    reordering if necessary,
    \item $lC$ for the column obtained by changing $z_i$ into
        $t_i$ in $C$ for each letter $z_i \in I$, and by reordering if
        necessary.
\end{itemize}
The pair $(lC,rC)$ will be called a \emph{split column}, which we well
sometimes denote by $lCrC$.
\label{definition:KN_doubled_columns}
\end{definition}

\begin{example}
    The following is a KN column of height 5 in type $C_n$ for 
    $n \ge 5$, together with the corresponding split column:

    \[ 
        C=
        \tableau{4 \\ 5\\ \overline{ 5 }\\ \overline{ 4 }\\ \overline{ 3 } }\,,
        \quad 
        (lC,rC) =
        \tableau{1 \\ 2\\ \overline{ 5 }\\ \overline{ 4 }\\ \overline{ 3 } }
        \tableau{4 \\ 5\\ \overline{ 3 }\\ \overline{ 2 }\\ \overline{ 1 } }
    \]
We used the fact that $ \left\{ z_1 > z_2 \right\}= \left\{ 5>4
\right\} $, so $ \left\{ t_1 > t_2 \right\}= \left\{ 2 > 1 \right\} $.
\end{example}

\begin{proposition}\label{typeCcryst}
 ~
\begin{itemize}
\item[{\rm (1)}] {\rm \cite{shsjdt}} A column with entries in $[\overline{n}]$ is a KN column if and only if it can be split. 
\item[{\rm (2)}] {\rm \citep[Theorem 5.1]{kasscb}} The splitting is compatible with the action of the crystal operators, i.e., if $f_i(C)=C'$ then $lC'rC'=f_i^2(lCrC)$. This holds more generally, for tensor products of columns, i.e., if $f_i(C_1\ldots C_n)=C_1'\ldots C_n'$, then $lC_1'rC_1'\ldots lC_n'rC_n'=f_i^2(lC_1rC_1\ldots lC_nrC_n)$. 
\end{itemize}
\end{proposition}

In what follows we will use Definition \ref{definition:KN_doubled_columns} as the
definition of KN columns.

We refer again to KR crystals of arbitrary (untwisted) type. Let $\lambda=(\lambda_1  \geq \lambda_2\geq \ldots)$ be a partition, which encodes a dominant weight in classical types; let $\lambda'$ be the conjugate partition.
We define 
\begin{equation}\label{btensl}\B:=\bigotimes_{i=1}^{\lambda_1} B^{\lambda_i',1}\,,
\end{equation}
 assuming that the corresponding column shape KR crystals exist. We denote such a tensor product generically by $B$. It is known that $B$ is connected as an affine crystal, but disconnected as a classical crystal (i.e., with the $0$-arrows removed). 

\begin{definition}
An arrow $b\rightarrow f_i(b)$ in $B$ is called a \emph{Demazure arrow} if $i \ne 0$, or $i=0$ and 
$\varepsilon_0(b)\ge 1$.
\end{definition}
Demazure arrows exclude $0$-arrows at the beginning of a string of
$0$-arrows.
We are interested in excluding $0$-arrows at the end of a string of
$0$-arrows. We call these arrows dual Demazure.
\begin{definition}
	\label{definition:pdemazure_arrows}
An arrow $b\rightarrow f_i(b)$ in $B$ is called a \emph{dual Demazure arrow} if $i \ne 0$, or $i=0$ and 
$\varphi_0(b)\geq 2$. 
\end{definition}

The \emph{energy function} $D=D_B$ 
is a function from $B$ to the integers, defined 
by summing the so-called local 
energies of all pairs of tensor factors \cite{hkorff}. We will only refer here to the so-called tail energy \cite{unialcmod2}, so we will not make this specification. (There are two conventions in defining the local energy of a pair of tensor factors: commuting the right one towards the head of the tensor product, or the left one towards the tail; the tail energy corresponds to the second choice.) 
We will only need the following property of the energy function, which defines it as an affine grading on $B$. 

\begin{theorem}{\rm \cite{NS08,KRcrystals_energy}}
	\label{theorem:energy_recursion}
The energy is preserved by the classical crystal operators $f_i$, i.e., $i\ne 0$.
	If $b\rightarrow f_0(b)$ is a dual Demazure arrow, then $D(f_0(b))=D(b)-1$.
\end{theorem}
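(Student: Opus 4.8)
The plan is to reduce both assertions to the standard definition of the energy as a sum of \emph{local energies} over pairs of tensor factors, and then to exploit two structural properties of the combinatorial $R$-matrix and of the local energy function $H$. Recall that on $\B=B_1\otimes\cdots\otimes B_k$, with each $B_j$ a column KR crystal, the energy can be written as
\[
  D(b_1\otimes\cdots\otimes b_k)=\sum_{1\le i<j\le k} H\!\left(\widetilde{b}_i^{\,(j)}\otimes b_j\right)+\sum_{j=1}^{k} D_{B_j}(b_j),
\]
where $\widetilde{b}_i^{\,(j)}$ is the element of $B_i$ obtained by carrying $b_i$ rightward past $b_{i+1},\dots,b_{j-1}$ through successive applications of $R$, and $D_{B_j}$ is the intrinsic energy of a single factor. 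In our setting each $D_{B_j}$ is constant: since $B_j\cong B(\omega_{r_j})$ is a single classical component and the intrinsic energy is constant on classical components, it is constant on all of $B_j$ and so only fixes the overall normalization. The two properties I will invoke are: (i) $R$ is an isomorphism of classical crystals, hence commutes with $e_l,f_l$ for $l\ne 0$, and $H$ is constant on classical components; and (ii) the local energy obeys the rule $H(f_0 x)-H(x)=-1$ if $f_0$ acts on the left factor of both $x$ and $R(x)$, $+1$ if it acts on the right factor of both, and $0$ otherwise (the signs depending on the chosen normalization).

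For the classical invariance, note that since $R$ commutes with $f_l$ for $l\ne 0$, the tail elements $\widetilde{b}_i^{\,(j)}$ transform functorially when a classical operator is applied to $b$; as $H$ is constant on classical components and each $D_{B_j}$ is constant, every summand above is unchanged. Hence $D(f_l b)=D(b)$ for $l\ne 0$, which is the first assertion.

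For the recursion, the tensor-product rule singles out one factor $b_m$ on which $f_0$ acts. Propagating this action to the right through the $R$-matrices that define the tail elements, and applying property (ii) pairwise, expresses $D(f_0 b)-D(b)$ as a telescoping sum of contributions $-1,0,+1$ over the affected pairs (the intrinsic terms contribute nothing, being constant). Away from the extremities of the $0$-string the $\pm1$ terms cancel, and the theorem asserts that the surviving contribution is exactly $-1$. \emph{This is where the dual Demazure hypothesis $\varphi_0(b)\ge 2$ enters, and I expect it to be the main obstacle}: at the bottom of a $0$-string the alternative in property (ii) — whether $f_0$ acts on the same side in $x$ and in $R(x)$ — can fail, so the clean telescoping breaks for the last arrow of the string; requiring $\varphi_0(b)\ge 2$ excludes precisely that arrow. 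The heart of the argument is therefore a careful case analysis of how $f_0$ interleaves with $R$ at the string boundary, showing that for all but the last $0$-arrow the net change is $-1$.

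A conceptually cleaner alternative realizes the component of $\B$ generated by its classically highest weight element as a Demazure subcrystal of an affine highest weight crystal $B(\Lambda)$; under such an isomorphism $D$ coincides, up to an additive constant, with the affine degree (the eigenvalue of the scaling element $d$) of the weight. The first assertion is then immediate because classical operators preserve this degree, and the recursion follows from $\langle d,\alpha_0\rangle=1$, so that each application of $f_0$ lowers the degree by one — provided both endpoints of the arrow lie in the Demazure stratum, which is exactly what the dual Demazure condition guarantees. Establishing the Demazure isomorphism and matching the two gradings is itself substantial, so the direct sum-of-local-energies computation is usually preferred for a hands-on verification.
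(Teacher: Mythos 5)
First, a point of comparison: the paper does not prove this theorem at all --- it is imported from \cite{KRcrystals_energy}, so there is no internal proof to measure your attempt against; your proposal has to stand on its own. Its setup is the standard one (energy as a sum of local energies $H$ over pairs computed through $R$-matrix tails, plus intrinsic terms that are constant because each column KR crystal is classically irreducible), and your argument for the \emph{first} assertion is essentially complete: by associativity of the tensor-product rule, a classical operator $f_l$ either leaves a given (rearranged) pair untouched or acts on it as the crystal operator of that two-fold tensor product, $H$ is constant on classical components, and the $R$-matrices commute with the classical operators; hence every summand of $D$ is unchanged.

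The genuine gap is in the \emph{second} assertion, which is the actual content of the theorem. Your property (ii) correctly reduces $D(f_0(b))-D(b)$ to a sum over pairs of contributions in $\{-1,0,+1\}$ (the left--left, right--right, and mixed cases of the local-energy recursion), but you never prove that this sum equals $-1$ when $\varphi_0(b)\geq 2$. The sentences ``away from the extremities the $\pm 1$ terms cancel'' and ``the theorem asserts that the surviving contribution is exactly $-1$'' are not arguments: the first is an unproven claim --- there is no a priori telescoping structure, since determining which factor $f_0$ lands on after each $R$-matrix rearrangement is precisely the hard combinatorial question --- and the second simply restates what is to be shown. You flag this yourself (``I expect it to be the main obstacle''), which is honest, but it means only the easy half of the theorem is established. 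For the record, the cited source \cite{KRcrystals_energy} proves the statement by your ``alternative'' route: realizing the relevant component as a Demazure subcrystal of an affine highest weight crystal $B(\Lambda)$ and identifying $D$, up to an additive constant, with the affine degree, the dual Demazure hypothesis being exactly what keeps both endpoints of the $0$-arrow inside the Demazure crystal. Since you also only sketch that route and acknowledge that establishing the isomorphism is substantial, the theorem remains unproven on either path.
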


It follows that the energy is determined up to a constant on the  
connected components of the 
subgraph of the 
affine crystal $B$ containing only the dual Demazure arrows.
In the case when all of the tensor factors of $B$ are \emph{perfect}
crystals \citep{hkqgcb}, the mentioned subgraph is connected, so the energy is determined up to a constant on the whole crystal $B$.

\begin{remark}\label{allperf}
In classical types, $B^{k,1}$ is perfect as follows: in types $A^{(1)}_{n-1}$ and $D^{(1)}_{n}$ for all $k$, in type $B^{(1)}_{n}$ only for $k\ne n$, and in type $C^{(1)}_n$ only for $k=n$ (using the standard indexing of the Dynkin diagram); in other words, for all the Dynkin nodes in simply-laced types, and only for the nodes corresponding to the long roots in non-simply-laced types. It was conjectured in \cite{hkorff} that the same is true in the exceptional types. In type $G_2^{(1)}$ this was confirmed in \cite{yampfg}, while for types $E_{6,7}^{(1)}$ and $F_{4}^{(1)}$ it was checked by computer, based on a model closely related to the quantum alcove model, see \cite{unialcmod2}.
\end{remark}

One can define a statistic called {\em charge} on the model based on KN columns for $\B$ in types $A$ and $C$. This was done in \cite{Lenart}, by translating 
a certain statistic in the Ram-Yip formula for Macdonald polynomials
(i.e., the height statistic in \eqref{eqn:height_statistic})
to the model based on KN columns, via 
certain bijections recalled in Section \ref{modelac}, cf. Remarks \ref{nddA} (2) and \ref{nddC} (2). In type $A$, this procedure leads to the same statistic that was originally defined by Lascoux and Sch\"utzenberger \cite{lassuc}. A similar procedure to the one in \cite{Lenart} is under investigation in type $B$ in \cite{balcst}. 
The charge statistic is related to the energy function by the following theorem.

\begin{theorem}{\rm \cite{naycharge,energy_charge}}
	\label{theorem:charge}
Let $\B$ be a tensor product of KR crystals in type $A_{n-1}^{(1)}$ or type $C_{n}^{(1)}$. For all $b\in \B$, we have
	$D(b) = - \charge(b)\,.$
\end{theorem}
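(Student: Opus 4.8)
The plan is to exploit the axiomatic characterization of $D$ furnished by Theorem~\ref{theorem:energy_recursion}: that result says $D$ is invariant under the classical crystal operators $f_i$ ($i\neq 0$) and drops by $1$ along each dual Demazure $0$-arrow, so $D$ is pinned down, up to an additive constant on each connected component of the dual-Demazure subgraph of $\B$, by these two rules. It therefore suffices to prove that $-\charge$ obeys the identical recursion and then to fix the constant on each component. Concretely, I would show that $\charge$ is constant along every classical operator and satisfies $\charge(f_0(b))=\charge(b)+1$ along every dual Demazure $0$-arrow; this makes $D+\charge$ locally constant, and a base-point computation then forces the constant to vanish.

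Because $\charge$ is defined by transporting the height statistic of the quantum alcove model to $\B$ through the affine crystal isomorphism that is the paper's main theorem (Section~\ref{modelac}), and this isomorphism intertwines the operators $e_i,f_i$ for all $i$ including $i=0$, the two desired properties can be checked directly on the quantum alcove model rather than on KN columns. Thus I reduce the problem to two statements about the height statistic $\height$ of admissible subsets, viewed as paths in the quantum Bruhat graph: first, that $\height$ is preserved by the model's classical crystal operators; second, that $\height$ increases by exactly $1$ under $f_0$ precisely along dual Demazure arrows.

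For the classical invariance I would analyze the explicit, local action of $f_i$ on an admissible subset, which alters the underlying path in the quantum Bruhat graph only near a few folding positions. The point is that this modification does not change the multiset of quantum (down) steps, together with their levels, that contribute to $\height$; Propositions~\ref{prop:deodhar} and~\ref{prop:deodhar0} are exactly the tools for tracking how edges are exchanged under the reflections implementing $f_i$, guaranteeing that an up-edge is traded for an up-edge and a down-edge for a down-edge at the same level. The $f_0$ case is where the real work lies and is the main obstacle: here the operator acts in the affine direction, and one must relate the dual Demazure condition $\varphi_0(b)\geq 2$ to the creation of a single additional quantum step at level zero. I would use Lemma~\ref{lemma:theta} to decide when a $\theta$-labeled edge is a quantum versus a Bruhat edge, and then track how the wrapping of the path around the hyperplane $H_{\theta,0}$ shifts $\height$ by $+1$; controlling the interaction between this affine folding and the height contribution, uniformly in types $A$ and $C$, is the delicate point.

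Finally I would fix the additive constant by evaluating both statistics on a distinguished representative of each connected component of the dual-Demazure subgraph. In type $A_{n-1}^{(1)}$ all tensor factors are perfect, so there is a single component and it suffices to check that $D$ and $-\charge$ agree on one convenient vertex, such as the highest weight element $u_\lambda$, which follows from a direct computation. In type $C_n^{(1)}$ the factors need not be perfect, so several components can occur; here I would exhibit a canonical minimal-height representative in each component and compute $\height$ there directly, or else propagate the equality across components by relating their base points through the affine crystal structure. This multi-component bookkeeping in type $C$ is the secondary difficulty, after the $f_0$ computation.
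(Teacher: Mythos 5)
This statement is not proved in the paper at all: it is imported verbatim from \cite{energy_charge}, and the proof given there takes a different route from yours --- in type $A$ the charge of the quantum alcove model is identified with the classical Lascoux--Sch\"utzenberger charge, for which the energy--charge relation was already known, and type $C$ is handled by a splitting (virtualization) embedding of the type $C_n^{(1)}$ crystals into type $A$ tensor products, with both $D$ and $\charge$ shown compatible with that embedding. Your skeleton (show $-\charge$ satisfies the recursion of Theorem~\ref{theorem:energy_recursion}, then fix constants componentwise) is a reasonable reconstruction of how Theorem~\ref{mainconj}(2) is proved in the later uniform-model papers, but as written it has two genuine gaps, both located exactly where you yourself flag ``delicate points'' and then leave unresolved.

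First, the $f_0$ bookkeeping does not work the way you assert. For classical $i$, your claim that an up-edge trades for an up-edge and a down-edge for a down-edge is justified by the final clause of Proposition~\ref{prop:deodhar}: conjugation by the simple reflection $s_{\alpha_p}$ preserves the sign of every label other than $\pm\alpha_p$, and no such label occurs strictly between $k$ and $m$, so $J^{-}$ and its levels are untouched. For $i=0$ the conjugating reflection is $s_\theta=s_{\widetilde{\alpha}}$, which is \emph{not} simple: any positive $\gamma$ with $\inner{\gamma}{\widetilde{\alpha}^{\vee}}=1$ is sent to a negative root, so intermediate folding positions in the window $(k,m)$ can migrate between $J^{+}$ and $J^{-}$ --- and accordingly Proposition~\ref{prop:deodhar0} deliberately omits the type-preservation clause of Proposition~\ref{prop:deodhar}. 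Hence the multiset of quantum steps and levels is \emph{not} preserved up to one extra step, and showing that the various gains and losses in $\height$ telescope to exactly $+1$ requires a substantially finer computation than your sketch contemplates. Second, the type $C$ normalization is not a ``secondary'' bookkeeping issue but an obstruction to your whole strategy: Theorem~\ref{theorem:energy_recursion} determines $D$ only up to a constant on each dual-Demazure component, $B^{r,1}$ is not perfect in type $C_n^{(1)}$, and the recursion carries no information about the relative constants. Your fallback of ``propagating the equality across components through the affine crystal structure'' would have to use precisely the non-dual-Demazure $0$-arrows, which the Remark following Theorem~\ref{theorem:crystal_isomorphism} notes are hard to control in the model; pinning the constants honestly requires the definition of $D$ via local energies and the combinatorial $R$-matrix, machinery developed neither in the paper nor in your proposal. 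This is exactly why \cite{energy_charge} resorts to the splitting embedding into type $A$ for type $C$ rather than arguing componentwise.
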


The charge gives a much 
easier method to compute the energy than the recursive one based on Theorem \ref{theorem:energy_recursion}. See also Theorem \ref{mainconj} (2) for a much more general result.

\section{The quantum alcove model}

In this section we construct the quantum alcove model and study its main properties. 

\subsection{\texorpdfstring{$\lambda$-chains and admissible subsets}{lambda-chains and
admissible subsets}}
\label{subsection:chains_and_admissible_sequences}
We say that two alcoves are \emph{adjacent} if they are distinct and have a common wall. Given a pair of adjacent alcoves $A$ and $B$, we write $A \stackrel{\beta}{\longrightarrow} B$ if the
common wall is of the form $H_{\beta,k}$ and the root
$\beta \in \Phi$ points in the direction from $A$ to $B$.

\begin{definition}{\rm \cite{lapawg}}
	An  \emph{alcove path} is a {sequence of alcoves} $(A_0, A_1, \ldots, A_m)$ such that
	$A_{j-1}$ and $A_j$ are adjacent, for $j=1,\ldots m.$ We say that an alcove path 
	is \emph{reduced} if it has minimal length among all alcove paths from $A_0$ to $A_m$.
	\end{definition}
	
	Let $A_{\lambda}=A_{\circ}+\lambda$ be the translation of the fundamental alcove $A_{\circ}$ by the weight $\lambda$.
	
	\begin{definition}{\rm \cite{lapawg}}
		The sequence of roots $(\beta_1, \beta_2, \dots, \beta_m)$ is called a
		\emph{$\lambda$-chain} if 
		\[	
		A_0=A_{\circ} \stackrel{-\beta_1}{\longrightarrow} A_1
		\stackrel{-\beta_2}{\longrightarrow}\dots 
		\stackrel{-\beta_m}{\longrightarrow} A_m=A_{-\lambda}\]
is a reduced alcove path.
\end{definition}

We now fix a dominant weight $\lambda$ and an alcove path $\Pi=(A_0, \dots , A_m)$ from 
$A_0 = A_{\circ}$ to $A_m = A_{-\lambda}$. Note that $\Pi$ is determined by the corresponding $\lambda$-chain $\Gamma:=(\beta_1, \dots, \beta_m)$, which consists of positive roots. 
A specific choice of a $\lambda$-chain, called a lex $\lambda$-chain and denoted $\Gamma_{\rm lex}$, is given in \cite{lapcmc}[Proposition 4.2]; this choice depends on a total order on the simple roots. 
We let $r_i:=s_{\beta_i}$, and let $\widehat{r_i}$ be the affine reflection in the hyperplane containing the common face of $A_{i-1}$ and $A_i$, for $i=1, \ldots, m$; in other words, 
$\widehat{r}_i:= s_{\beta_i,-l_i}$, where $l_i:=|\left\{ j<i \, ; \, \beta_j = \beta_i \right\} |$. 
We define $\widetilde{l}_i:= \inner{\lambda}{\beta_i^{\vee}} -l_i = |\left\{ j \geq i \, ; \, \beta_j = \beta_i \right\} |$.
\begin{example}
	\label{example:lambda_chain}
	Consider the dominant weight $\lambda=3\varepsilon_1+2\varepsilon_2$ 
	in the root system $A_2$ (cf. Section \ref{subsection:TypeA} 
	and the notation therein).
	A $\lambda$-chain is $\Gamma=(\alpha_{23},\alpha_{13},\alpha_{23},\alpha_{13},\alpha_{12},\alpha_{13})$. 
	The corresponding $l_i$ are $(0,0,1,1,0,2)$ and
	$\widetilde{l}_i$ are $\left\{ 2,3,1,2,1,1 \right\}$. 
	The alcove path is shown in Figure \ref{fig:unfolded_chain}; here 
	$A_\circ$  is shaded, and $A_{-\lambda}$ is the alcove 
	at the end of the path.
\end{example}
\begin{figure}[h]
    \centering
\subfloat[$\Gamma$ for $\lambda=3\varepsilon_1+2\varepsilon_2$ \label{fig:unfolded_chain}]
{\includegraphics[scale=.41]{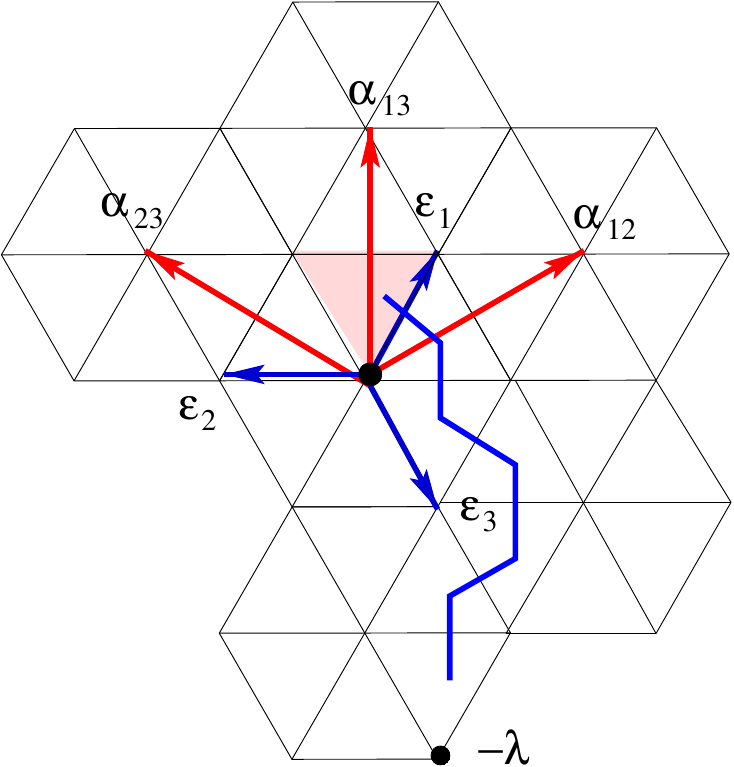}}
\hspace{10pt}
\subfloat[$\Gamma(J)$ for $J=\left\{ 1,2,3,5 \right\}$
		\label{fig:folded_chain}]
{\includegraphics[scale=.41]{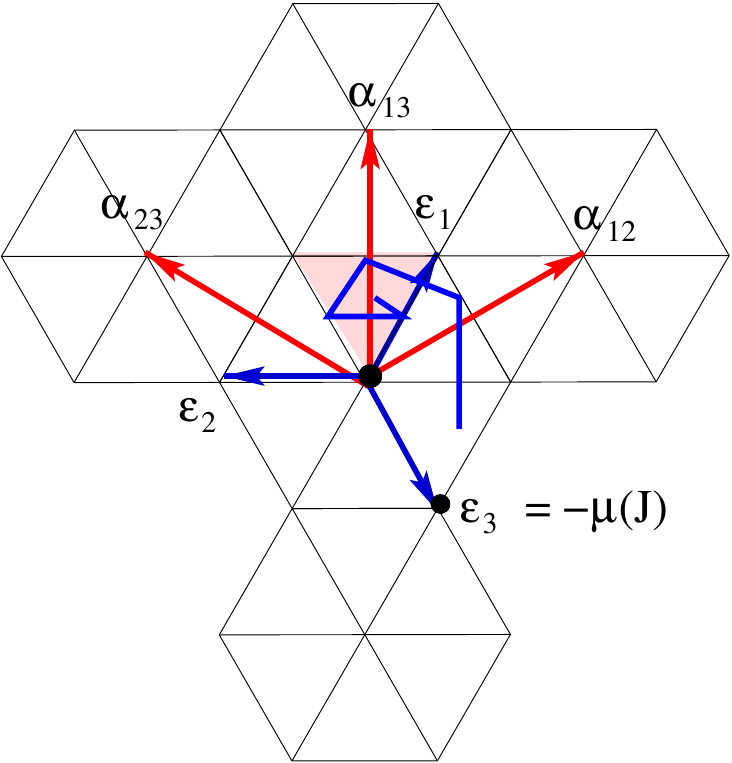}}
	\label{fig:gamma_and_delta}
	\caption{Unfolded and folded $\lambda$-chain}
\end{figure}

Let $J=\left\{ j_1 < j_2 < \cdots < j_s \right\}$  be a subset of $[m]$. The elements of $J$ are called \emph{folding positions}. We fold $\Pi$ in the hyperplanes corresponding to these positions and obtain a folded path, see Example \ref{example:folded_lambda_chain} and Figure \ref{fig:folded_chain}. Like $\Pi$,  the folded path can be recorded by a sequence of roots, namely 
$\Delta = \Gamma(J)=\left( \gamma_1,\gamma_2, \dots, \gamma_m \right)$; 
	 here 
	 \begin{equation}\label{defw}
	 \gamma_k:=r_{j_1}r_{j_2}\dots r_{j_p}(\beta_k)\,,
	 \end{equation}
	  with $j_p$ the 
	 largest folding position less than $k$. 
	 We define $\gamma_{\infty} := r_{j_1}r_{j_2}\dots r_{j_s}(\rho)$.
	 Upon folding, the hyperplane separating the alcoves $A_{k-1}$ and $A_k$ in $\Pi$ is mapped to 
\begin{equation}\label{deflev}
H_{|\gamma_k|,-\l{k}}=\widehat{r}_{j_1}\widehat{r}_{j_2}\dots \widehat{r}_{j_p}(H_{\beta_k,-l_k})\,,
\end{equation}
for some $\l{k}$, which is defined by this relation.

	 	 Given $i \in J$, we  say that $i$ is a \emph{positive folding position} if 
	 $\gamma_i>0$, and a \emph{negative folding position} if $\gamma_i<0$. 
	 We denote the positive folding positions by $J^{+}$, and the negative 
	 ones by $J^{-}$.
	 We call 
$\mu=\mu(J):=-\widehat{r}_{j_1}\widehat{r}_{j_2}\ldots \widehat{r}_{j_s}(-\lambda)$ the 
\emph{weight} of $J$.
We define 
\begin{equation}
	\label{eqn:height_statistic}
	\height(J):= \sum_{j \in J^{-}} \widetilde{l}_j.
\end{equation}
\begin{definition}
	A subset $J=\left\{ j_1 < j_2 < \cdots < j_s \right\} \subseteq [m]$ (possibly empty)
 is an \emph{admissible subset} if
we have the following path in the quantum Bruhat graph on $W$:
\begin{equation}
	\label{eqn:admissible}
	1 \stackrel{\beta_{j_1}}{\longrightarrow} r_{j_1} \stackrel{\beta_{j_2}}{\longrightarrow} r_{j_1}r_{j_2} 
	\stackrel{\beta_{j_3}}{\longrightarrow} \cdots \stackrel{\beta_{j_s}}{\longrightarrow} r_{j_1}r_{j_2}\cdots r_{j_s}\,.
\end{equation}
We call $\Delta=\Gamma(J)$ an \emph{admissible folding}.
We let $\A(\Gamma)$ be the collection of 
admissible subsets.
\end{definition}

\begin{remark}\label{spec}
If we restrict to admissible subsets for which the path \eqref{eqn:admissible} has no down steps, we recover the classical alcove model in \cite{lapawg,lapcmc}.
\end{remark}

\begin{example}
	\label{example:folded_lambda_chain}
	We continue Example \ref{example:lambda_chain}.
	Let $J =  \left\{ 1,2,3,5 \right\}$, then 
	$\Delta=\Gamma(J)=(\alpha_{23},\alpha_{12}, \alpha_{31}, \alpha_{23},$ \linebreak $\alpha_{21},\alpha_{13})$. The folded path is shown in Figure \ref{fig:folded_chain}.
We have $J^{+}=\left\{ 1,2\right\}$, $J^{-}=\left\{ 3,5 \right\}$, 
$\mu(J)=-\varepsilon_3$, and $\height(J)=2$.
In Section \ref{subsection:TypeA} 
we will describe an easy way to verify that $J$ is admissible. 
\end{example}

\subsection{Crystal operators}
\label{subsection:crystalop}
In this section we define the crystal operators $f_i$ and $e_i$. 
Given $J\subseteq [m]$ and
$\alpha\in \Phi$, we will use the following notation:
	\[ 
	 I_\alpha = I_{\alpha}(\Delta):= \left\{ i \in [m] \, | \, \gamma_i = \pm \alpha \right\}\,, \qquad 
\widehat{I}_\alpha = \widehat{I}_{\alpha}(\Delta):= I_{\alpha} \cup \{\infty\}\,, 
\]
and $l_{\alpha}^{\infty}:=\inner{\mu(J)}{\sgn(\alpha)\alpha^{\vee}}$.
The following graphical representation of the heights $l_i^J$ for $i\in{I}_\alpha$ and $l_{\alpha}^{\infty}$ 
is useful for defining the crystal operators.
Let 
\[\widehat{I}_{\alpha}= \left\{ i_1 < i_2 < \dots < i_n \leq m<i_{n+1}=\infty \right\}\,
	\text{ and  }
	\varepsilon_i := 
	\begin{cases}
		\,\,\,\, 1 &\text{ if } i \not \in J\\
		-1 & \text { if } i \in J
	\end{cases}.\,
	\]
If $\alpha > 0$, we define the continuous piecewise linear function 
$g_{\alpha}:[0,n+\frac{1}{2}] \to \reals$ by
\begin{equation}
	\label{eqn:piecewise-linear_graph}
	g_\alpha(0)= -\frac{1}{2}, \;\;\; g'_{\alpha}(x)=
	\begin{cases}
		\sgn(\gamma_{i_k}) & \text{ if } x \in (k-1,k-\frac{1}{2}),\, k = 1, \ldots, n\\
		\varepsilon_{i_k}\sgn(\gamma_{i_k}) & 
		\text{ if } x \in (k-\frac{1}{2},k),\, k=1,\ldots,n \\
		\sgn(\inner{\gamma_{\infty}}{\alpha^{\vee}}) &
		\text{ if } x \in (n,n+\frac{1}{2}).
	\end{cases}
\end{equation}
If $\alpha<0$, we define $g_{\alpha}$ to be the graph obtained by reflecting $g_{-\alpha}$ in
the $x$-axis.
By \cite{lapcmc}[Propositions 5.3 and 5.5], for any $\alpha$ we have 
\begin{equation}
	\label{eqn:graph_height}
	\sgn(\alpha)\l{i_k}=g_\alpha\left(k-\frac{1}{2}\right), k=1, \dots, n, \, 
	\text{ and }\, 
	\sgn(\alpha)l_{\alpha}^{\infty}:=
	\inner{\mu(J)}{\alpha^{\vee}} = g_{\alpha}\left(n+\frac{1}{2}\right).
\end{equation}

\begin{example}
	\label{example:graph}
	We continue Example \ref{example:folded_lambda_chain}. 
	The graphs of $g_{\alpha_2}$ and $g_{\theta}$ are given in Figure \ref{fig:graph}.
\end{example}
	\begin{figure}[h]
        \centering
		\includegraphics[scale=.45]{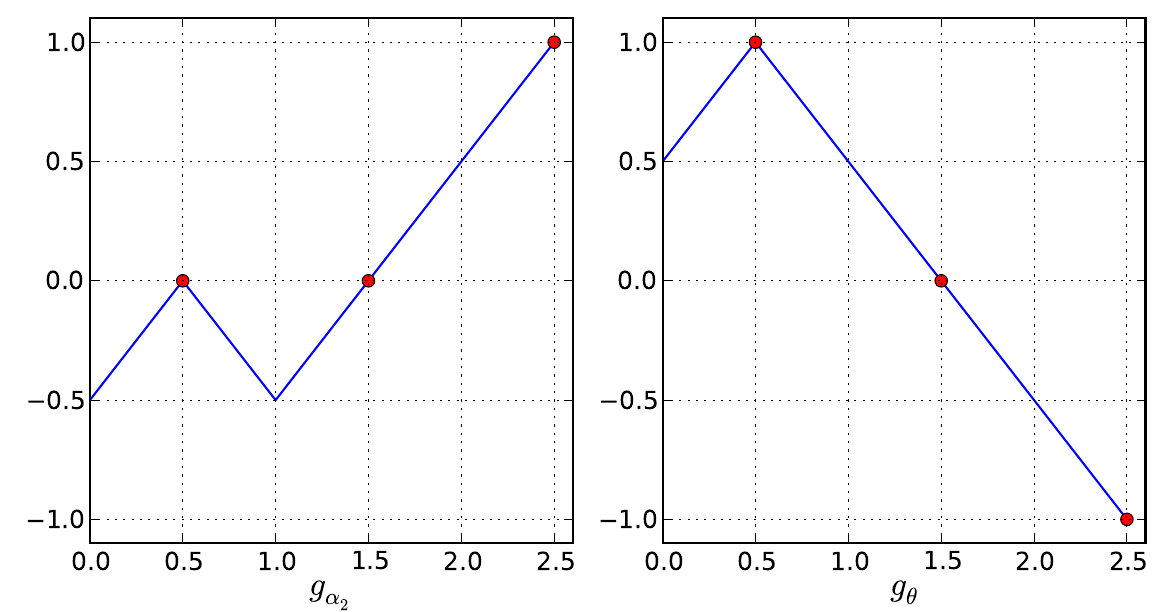}
		\caption{}
		\label{fig:graph}
	\end{figure}

Let $J$ be an admissible subset. 
Let $\delta_{i,j}$ be the Kronecker delta function.
Fix $p$ in $\{0,\ldots,r\}$, so $\alpha_p$ is a simple root if $p>0$, or $\theta$ if $p=0$.
Let $M$ be the maximum of $g_{{\alpha}_p}$. Let $m$ be the minimum index 
$i$  in $\widehat{I}_{{\alpha}_p}(\Delta)$ for which we have $\sgn({\alpha}_p)\l{i}=M$. 
By Proposition \ref{prop:rootF}, if $M\ge\delta_{p,0}$, then we have either $m\in J$ or $m=\infty$; furthermore, if $M>\delta_{p,0}$, then $m$ has a predecessor $k$ in $\widehat{I}_{{\alpha}_p}$, and we have $k\not\in J$. We define
\begin{equation}
	\label{eqn:rootF} 
f_p(J):= 
	\begin{cases}
		(J \backslash \left\{ m \right\}) \cup \{ k \} & \text{ if $M>\delta_{p,0} $ } \\
				\Bzero & \text{ otherwise }.
	\end{cases}
\end{equation}
Now we define $e_p$. Again let $M:= \max g_{{\alpha}_p}$. Assuming that $M>\inner{\mu(J)}{{\alpha}_p^{\vee}}$, let $k$ be the maximum index 
$i$  in $I_{{\alpha}_p}$ for which we have $\sgn({\alpha}_p)\l{i}=M$,
and let $m$ be the successor of $k$ in $\widehat{I}_{{\alpha}_p}$. Assuming also that $M\ge\delta_{p,0}$, by Proposition \ref{prop:rootE} we have $k\in J$, and either $m\not\in J$ or $m=\infty$. 
Define 
\begin{equation}
	\label{eqn:rootE}
e_p(J):= 
	\begin{cases}
		(J \backslash \left\{ k \right\}) \cup \{ m \} & \text{ if }
		M>\inner{\mu(J)}{{\alpha}_p^{\vee}} \text{ and } M \geq \delta_{p,0}  \\
				\Bzero & \text{ otherwise. }
	\end{cases}
\end{equation}
In the above definitions, we use the
convention that $J\backslash \left\{ \infty \right\}= J \cup \left\{ \infty \right\} = J$. 
\begin{example}
	\label{example:root_op}
	We continue Example \ref{example:graph}. 
 We find $f_2(J)$ by noting that $\widehat{I}_{\alpha_2}=\left\{1,4,\infty  \right\}$. 
 From $g_{\alpha_2}$ in Figure \ref{fig:graph} we can see that the heights $\l{i}$ and $l_{\alpha_2}^\infty$ corresponding to these positions are $0,0,1$, 
		 so $k=4,\,m=\infty$, and
		  $f_2(J)=J \cup \{ 4 \} = \left\{ 1,2,3,4,5 \right\}$. 
		  We can see from Figure \ref{fig:graph} that the maximum of 
		  $g_{\theta}=1$, hence $f_0(J)=\Bzero$.
          To compute $e_0(J)$ observe that 
          $\widehat{I}_{\theta}=\left\{3,6\right\}$ with $k = 3$ and
          $m = 6$.
          So $e_0(J) = ( J \backslash \{k\} ) \cup \{ m \} = \left\{ 1,2,5,6 \right\} $.
\end{example}

The following theorem is one of our main results, and will be proved in Section \ref{subsection:propositions}.

\begin{theorem}
    \label{theorem:admissible}
    $\!\!\!\!\!\!\!\!\!\!\!$ \begin{enumerate}
    \item[{\rm (1)}]  If $J$ is an admissible subset and if $f_p(J) \ne \Bzero$, then $f_p(J)$ is also an admissible subset. Similarly for $e_p(J)$. Moreover, $f_p(J)=J'$ if and only if  $e_p(J')=J$.
    \item[{\rm (2)}] We have $ \mu(f_p(J)) = \mu(J) - \alpha_p $. Moreover, if $M\ge\delta_{p,0}$, then
    \[\varphi_p(J)=M-\delta_{p,0}\,,\;\;\;\;\varepsilon_p(J)=M-\langle\mu(J),\alpha_p^\vee\rangle\,,\]
    while otherwise $\varphi_p(J)=\varepsilon_p(J)=\Bzero$. 
    \end{enumerate}
         \end{theorem}

\subsection{Proofs}
\label{subsection:propositions}
In this section we collect necessary results for the proof of Theorem
\ref{theorem:admissible}. The techniques are similar to those in \cite{lapcmc}; we go in detail over the parts of the proofs where there are notable differences, and we refer to the mentioned paper for the remaining parts. 
	\begin{lemma}
	\label{lemma:admissible}
	Let $w\in W$, $\alpha$ a simple root or $\theta$,
	and $\beta$ a positive root. 
	If we have $w \longrightarrow ws_{\beta}$, as well as 
	$w^{-1}(\alpha)>0$ and $s_{\beta}w^{-1}(\alpha)<0$, 
	then $w^{-1}(\alpha)=\beta$.
	\end{lemma}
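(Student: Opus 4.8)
The plan is to prove the conclusion $w^{-1}(\alpha)=\beta$ by reducing it to the group identity $s_\alpha w = ws_\beta$, and then ruling out $s_\alpha w\neq ws_\beta$ by means of Propositions~\ref{prop:deodhar} and~\ref{prop:deodhar0}. Write $\gamma:=w^{-1}(\alpha)$, so the hypotheses read $\gamma>0$ and $s_\beta(\gamma)<0$. Using the conjugation formula $w^{-1}s_\alpha w=s_{w^{-1}(\alpha)}=s_\gamma$, the identity $s_\alpha w=ws_\beta$ is equivalent to $s_\gamma=s_\beta$, i.e. to $\gamma=\pm\beta$; since $\gamma>0$ and $\beta\in\Phi^+$, this forces $\gamma=\beta$. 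Hence it suffices to show that $s_\alpha w\neq ws_\beta$ is impossible. I would assume $s_\alpha w\neq ws_\beta$ for contradiction and split according to whether $\alpha$ is simple or $\alpha=\theta$.

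In the first case, $\alpha$ simple, I translate the two sign hypotheses into cover relations via the standard dictionary $v^{-1}(\alpha)>0\iff v\lessdot s_\alpha v$ for a simple root $\alpha$. Applying this with $v=w$ gives $w\lessdot s_\alpha w$ from $\gamma>0$; applying it with $v=ws_\beta$, and using $(ws_\beta)^{-1}(\alpha)=s_\beta(\gamma)<0$, gives $s_\alpha ws_\beta\lessdot ws_\beta$. Now $w\lessdot s_\alpha w$ together with the hypothesis $w\longrightarrow ws_\beta$ is exactly the left-hand side of Proposition~\ref{prop:deodhar}; since we are assuming $s_\alpha w\neq ws_\beta$, that proposition yields $ws_\beta\lessdot s_\alpha ws_\beta$, which contradicts $s_\alpha ws_\beta\lessdot ws_\beta$.

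In the second case, $\alpha=\theta$, I use Lemma~\ref{lemma:theta} in place of the cover-relation dictionary. From $\gamma=w^{-1}(\theta)>0$ and Lemma~\ref{lemma:theta} I obtain $w\qstep s_\theta w$; from $(ws_\beta)^{-1}(\theta)=s_\beta(\gamma)<0$ and Lemma~\ref{lemma:theta} applied to $ws_\beta$ I obtain $s_\theta ws_\beta\qstep ws_\beta$. Again $w\qstep s_\theta w$ together with $w\longrightarrow ws_\beta$ is the hypothesis of Proposition~\ref{prop:deodhar0}, and assuming $s_\theta w\neq ws_\beta$ that proposition yields $ws_\beta\qstep s_\theta ws_\beta$. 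But by Lemma~\ref{lemma:theta} applied to $ws_\beta$, the edge $ws_\beta\qstep s_\theta ws_\beta$ means $(ws_\beta)^{-1}(\theta)>0$, directly contradicting $(ws_\beta)^{-1}(\theta)<0$.

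I expect the only real work to be bookkeeping: carefully matching each sign hypothesis to the correct instance of the cover dictionary or of Lemma~\ref{lemma:theta}, and checking that the hypothesis $w\longrightarrow ws_\beta$ supplies precisely the quantum Bruhat edge required by Propositions~\ref{prop:deodhar} and~\ref{prop:deodhar0}. The main conceptual obstacle is recognizing that the statement is really the assertion $s_\alpha w=ws_\beta$ in disguise, after which the two propositions do the heavy lifting; a secondary point to watch is that the $\theta$-case contradiction arises not from a clash of cover relations but from a clash of signs of $(ws_\beta)^{-1}(\theta)$ via Lemma~\ref{lemma:theta}.
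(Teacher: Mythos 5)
Your proposal is correct and follows essentially the same argument as the paper: reduce to the identity $s_\alpha w = ws_\beta$, then rule out $s_\alpha w \neq ws_\beta$ by combining the cover-relation dictionary (resp.\ Lemma~\ref{lemma:theta}) with Proposition~\ref{prop:deodhar} (resp.\ Proposition~\ref{prop:deodhar0}) to produce a contradiction. The only cosmetic difference is that in the $\theta$-case you phrase the contradiction as a sign clash for $(ws_\beta)^{-1}(\theta)$ via the ``if and only if'' in Lemma~\ref{lemma:theta}, while the paper phrases it as two opposite quantum edges between $ws_\beta$ and $s_\theta ws_\beta$; these are the same contradiction.
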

	\begin{proof}
		If $s_{\alpha}w=ws_{\beta}$, then $w^{-1}(\alpha)=\pm \beta$, 
		and $w^{-1}(\alpha)>0$ implies $w^{-1}(\alpha)=\beta$.
		Suppose by way of contradiction that $s_{\alpha}w\ne ws_{\beta}$.
		First suppose $\alpha$ is a simple root.
		Since $w^{-1}(\alpha)>0 $, then $w \lessdot s_{\alpha}w$. 
		By assumption we have $w \longrightarrow ws_{\beta}$, hence by 
		Lemma \ref{prop:deodhar}
		we have $ws_{\beta} \lessdot s_{\alpha}ws_{\beta}$. 
		But $s_{\beta}w^{-1}(\alpha)<0$ implies 
		$s_{\alpha}ws_{\beta}\lessdot ws_{\beta}$, which is a contradiction. 

		Suppose $\alpha=\theta$. Since $w^{-1}(\theta)>0$, we deduce $w \qstep s_{\theta}w$, by Lemma \ref{lemma:theta}, and then $ws_{\beta} \qstep s_{\theta}ws_{\beta}$, by Lemma  \ref{prop:deodhar0}. 
		On another hand, since $s_{\beta}w^{-1}(\theta)<0$, Lemma \ref{lemma:theta} implies that 
		$s_{\theta}ws_{\beta } \qstep ws_{\beta} $,  which is a
        contradiction.
	\end{proof}
	\begin{lemma}
		\label{lemma:admissible2}
		Let $J = \left\{ j_1 < j_2 < \cdots < j_s \right\} $ 
        be an admissible subset. Assume that $r_{j_a}\dots r_{j_1}(\alpha)>0$ and 
		$r_{j_{b}}\dots r_{j_1}(\alpha)<0$  where
		$\alpha$ is a simple root or $\theta$, and $0\leq a < b$
		(if $a=0$, then the first condition is void). Then there exists $i$ with $a \leq i < b$ such that 
		$\gamma_{j_{i+1}}=\alpha$.
	\end{lemma}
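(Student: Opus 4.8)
The plan is to recast the hypotheses as a statement about the sign of $\alpha$ along the admissible path, and then to locate a single sign change to which Lemma \ref{lemma:admissible} applies directly. Write $w_i := r_{j_1} r_{j_2} \cdots r_{j_i}$ for $0 \le i \le s$ (so $w_0 = 1$), so that the admissibility of $J$ says precisely that each $w_i \longrightarrow w_{i+1}$ is an edge of the quantum Bruhat graph, with $w_{i+1} = w_i s_{\beta_{j_{i+1}}}$. Since $w_i^{-1} = r_{j_i} \cdots r_{j_1}$, we have $w_i^{-1}(\alpha) = r_{j_i} \cdots r_{j_1}(\alpha)$, so the hypotheses read $w_a^{-1}(\alpha) > 0$ and $w_b^{-1}(\alpha) < 0$. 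The key bookkeeping observation is that, since $j_1 < \cdots < j_s$, the largest folding position below $j_{i+1}$ is $j_i$; hence $\gamma_{j_{i+1}} = r_{j_1} \cdots r_{j_i}(\beta_{j_{i+1}}) = w_i(\beta_{j_{i+1}})$, so the desired conclusion $\gamma_{j_{i+1}} = \alpha$ is equivalent to $w_i^{-1}(\alpha) = \beta_{j_{i+1}}$.

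Next I would locate the sign change. Each $w_i^{-1}(\alpha)$ is a root, hence nonzero with a well-defined sign. Since $w_a^{-1}(\alpha) > 0$ while $w_b^{-1}(\alpha) < 0$ and $a < b$, let $i$ be the largest index in $\{a, a+1, \ldots, b-1\}$ with $w_i^{-1}(\alpha) > 0$; this set is nonempty because $i = a$ qualifies. By maximality (and using $w_b^{-1}(\alpha) < 0$ directly in the boundary case $i = b-1$) we obtain $w_{i+1}^{-1}(\alpha) < 0$, while $w_i^{-1}(\alpha) > 0$ by choice. Thus at this index the image of $\alpha$ flips sign across the single reflection $r_{j_{i+1}} = s_{\beta_{j_{i+1}}}$, and $a \le i < b$.

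Finally I would apply Lemma \ref{lemma:admissible} with $w = w_i$ and $\beta = \beta_{j_{i+1}}$ (a positive root). Its hypotheses are met: $w \longrightarrow w s_\beta = w_{i+1}$ is an edge of the quantum Bruhat graph; $w^{-1}(\alpha) = w_i^{-1}(\alpha) > 0$; and $s_\beta w^{-1}(\alpha) = r_{j_{i+1}} w_i^{-1}(\alpha) = w_{i+1}^{-1}(\alpha) < 0$. The lemma then yields $w_i^{-1}(\alpha) = \beta_{j_{i+1}}$, which by the reformulation above is exactly $\gamma_{j_{i+1}} = \alpha$, as required. Because Lemma \ref{lemma:admissible} already packages the delicate case analysis (simple root versus $\theta$, and whether $s_\alpha w = w s_\beta$), the present argument reduces to a discrete intermediate-value statement; the only points demanding care are the translation between $\gamma_{j_{i+1}}$ and $w_i^{-1}(\alpha)$ via the folding formula, and checking that every step of the admissible path is a genuine edge $w_i \longrightarrow w_{i+1}$ so that Lemma \ref{lemma:admissible} is applicable. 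I expect this translation step to be the main (and only real) obstacle.
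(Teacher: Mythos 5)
Your proof is correct and takes essentially the same route as the paper's: locate a single sign change $w_i^{-1}(\alpha)>0$, $w_{i+1}^{-1}(\alpha)<0$ along the path, apply Lemma \ref{lemma:admissible} to get $\beta_{j_{i+1}}=w_i^{-1}(\alpha)$, and translate back via the folding formula $\gamma_{j_{i+1}}=w_i(\beta_{j_{i+1}})$; you merely make explicit the discrete intermediate-value step that the paper leaves as ``find $i$.'' One caveat, shared with (and inherited from) the paper's own proof: your claim that $i=a$ qualifies relies on $w_a^{-1}(\alpha)>0$, which when $a=0$ and the first hypothesis is void amounts to $\alpha>0$ — true for simple roots but not for $\theta$; this degenerate case never arises in the applications, since Propositions \ref{prop:A} and \ref{prop:B} explicitly arrange $a>0$ whenever $\alpha=\theta$.
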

		\begin{proof}
			Find $i$ with $a \leq i < b$ such that $r_{j_i}\dots r_{j_1}(\alpha)>0$ and 
			$r_{j_{i+1}}\dots r_{j_1}(\alpha)<0$. By Lemma \ref{lemma:admissible}, we have 
			$\beta_{j_{i+1}}= r_{j_i}\dots r_{j_1}(\alpha)$. This means that 
			$\gamma_{j_{i+1}}=r_{j_1}\dots r_{j_i}(\beta_{j_{i+1}})=\alpha$.
		\end{proof}

	\begin{proposition}
		\label{prop:A}
		Let $J = \left\{ j_1 < j_2 < \cdots < j_s \right\} $
        be an admissible subset. Assume that $\alpha$ is a simple root or $\theta$, 
		with $I_{\alpha}\ne \emptyset$. 
		Let $m \in I_{\alpha}$ be an element for which its predecessor $k$ 
		(in $I_{\alpha}$) 
		satisfies $(\gamma_k,\varepsilon_k)\in \{(\alpha,1),(-\alpha,-1)\}.$ Then we have $\gamma_m=\alpha$.
	\end{proposition}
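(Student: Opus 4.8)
The plan is to translate everything into statements about the vertices $w_p := r_{j_1} r_{j_2} \cdots r_{j_p}$ (with $w_0 = 1$) of the quantum Bruhat graph path \eqref{eqn:admissible}. For any index $i$ in the chain, write $p(i)$ for the number of folding positions strictly less than $i$, so that $\gamma_i = w_{p(i)}(\beta_i)$ directly from the definition of $\Gamma(J)$. The first observation is that for $i \in I_{\alpha}$ the sign of $\gamma_i$ is controlled by $w_{p(i)}^{-1}(\alpha)$: since $\beta_i > 0$ and $w_{p(i)}(\beta_i) = \pm \alpha$, we get $\gamma_i = \alpha \iff w_{p(i)}^{-1}(\alpha) > 0$ and $\gamma_i = -\alpha \iff w_{p(i)}^{-1}(\alpha) < 0$. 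Writing $q := p(m)$, the desired conclusion $\gamma_m = \alpha$ is therefore equivalent to $w_q^{-1}(\alpha) > 0$, and this is what I would aim to prove.

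Next I would extract from the hypothesis on the predecessor $k$ the fact that $w_{p'}^{-1}(\alpha) > 0$, where $p' := p(k) + [\,k \in J\,]$ counts the folding positions $\le k$. There are two cases. If $(\gamma_k, \varepsilon_k) = (\alpha, 1)$, then $k \notin J$, so $p' = p(k)$ and $\gamma_k = \alpha$ already gives $w_{p'}^{-1}(\alpha) = \beta_k > 0$. If $(\gamma_k, \varepsilon_k) = (-\alpha, -1)$, then $k \in J$, say $k = j_{p+1}$ with $p = p(k)$, so $p' = p+1$ and $w_{p+1}^{-1} = s_{\beta_k} w_p^{-1}$; from $\gamma_k = w_p(\beta_k) = -\alpha$ we obtain $w_p^{-1}(\alpha) = -\beta_k$, whence $w_{p'}^{-1}(\alpha) = s_{\beta_k}(-\beta_k) = \beta_k > 0$. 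So in either case $w_{p'}^{-1}(\alpha) > 0$, and one checks $p' \le q$ because every folding position $\le k$ is automatically $< m$.

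Then I would argue by contradiction. Suppose $\gamma_m = -\alpha$, i.e. $w_q^{-1}(\alpha) < 0$. Combined with $w_{p'}^{-1}(\alpha) > 0$ this forces $p' < q$, so Lemma \ref{lemma:admissible2} applies with the simple root or $\theta$ equal to $\alpha$ and with $a = p'$, $b = q$, after rewriting $w_p^{-1} = r_{j_p} \cdots r_{j_1}$. It yields an index $i$ with $p' \le i < q$ and $\gamma_{j_{i+1}} = \alpha$. Since $j_{i+1}$ is a folding position having exactly $i$ folding positions below it, $j_{i+1} \in I_{\alpha}$. Finally, the inequality $i \ge p'$ translates (via counting folding positions $\le k$) into $j_{i+1} > k$, and $i < q$ translates into $j_{i+1} < m$; so $k < j_{i+1} < m$ with $j_{i+1} \in I_{\alpha}$, contradicting that $k$ and $m$ are consecutive in $I_{\alpha}$. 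This contradiction yields $\gamma_m = \alpha$.

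The main obstacle I anticipate is not any single hard estimate but the careful index bookkeeping: correctly relating the folding-position counter $p(\cdot)$ to the positions $k, m \in I_{\alpha}$ and to the index $i+1$ delivered by Lemma \ref{lemma:admissible2}, and in particular verifying the strict inequalities $k < j_{i+1} < m$ that produce the contradiction. The genuinely algebraic step is the sign computation in the case $(\gamma_k, \varepsilon_k) = (-\alpha, -1)$ using $w_{p+1}^{-1} = s_{\beta_k} w_p^{-1}$, which must be done with attention to the direction of the reflection $s_{\beta_k}$.
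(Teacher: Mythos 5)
Your proposal is correct and follows essentially the same route as the paper's proof: assume $\gamma_m=-\alpha$, derive the sign conditions $w_{p'}^{-1}(\alpha)>0$ and $w_q^{-1}(\alpha)<0$ (your $p'$ and $q$ are exactly the paper's indices $a$ and $b$, defined by $j_a<k<j_{a+1}$ or $j_a=k<j_{a+1}$ and $j_b<m\leq j_{b+1}$), apply Lemma \ref{lemma:admissible2}, and contradict the assumption that $k$ is the predecessor of $m$ in $I_{\alpha}$. Your formulation even streamlines the one point the paper must flag separately (that $a>0$ when $\alpha=\theta$), since you establish the positivity hypothesis of Lemma \ref{lemma:admissible2} directly in all cases rather than relying on the void-condition convention.
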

		\begin{proof}
			First suppose that $(\gamma_k,\varepsilon_k)=(\alpha,1)$. 
			Assume that $\gamma_m=-\alpha$. 
			Let us define the index $b$ by the condition 
			$j_b < m \leq j_{b+1}$ 
			(possibly $b=s$, in which case the second inequality is dropped).
			We define the index 
			$a$ by the condition $j_{a} < k < j_{a+1}$ 
			(possibly $a=0$, in which case the first inequality is dropped). 
			We clearly have
			$r_{j_1}\dots r_{j_b}(\beta_m)=-\alpha$, 
			which implies $r_{j_b}\dots r_{j_1}(\alpha)<0$. 
			We also have 
			$r_{j_1}\dots r_{j_a}(\beta_k)=\alpha$, so 
			$r_{j_a}\dots r_{j_1}(\alpha)>0$ (hence $a<b$).
			Note that if $\alpha=\theta$, then $a>0$.
			We can now apply Lemma \ref{lemma:admissible2} 
			to conclude that $\gamma_{j_i} =\alpha$ for some $i \in [a+1,b]$.
			Since $k<j_{a+1} \leq j_{b} < m$, we 
			contradicted the assumption that $\gamma_k$ is the predecessor
			of $\gamma_m$ in $I_{\alpha}$.

			Now suppose that $(\gamma_k,\varepsilon_k)=(-\alpha,-1)$. 
			Assume that $\gamma_m = -\alpha$
			and define $b$ as in the previous case. Again we have  $r_{j_b}\dots r_{j_1}(\alpha)<0$.
			Define $a$ by the condition $j_a = k < j_{a+1}$. 
			Hence $r_{j_1}\dots r_{j_{a-1}}(\beta_{j_a})=-\alpha$, so 
			 $r_{j_a}\dots r_{j_1}(\alpha)>0$. This leads to a contradiction by a similar reasoning to the one above.
		\end{proof}

	\begin{proposition}
		\label{prop:A1}
		Let $J$
        be an admissible subset. Assume that $\alpha$ is a simple root for 
		which $I_{\alpha}\ne \emptyset$. Let $m \in I_{\alpha}$ be 
		the minimum of $I_{\alpha}$. Then we have $\gamma_m=\alpha$.
	\end{proposition}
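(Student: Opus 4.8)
The plan is to argue by contradiction, reducing the whole statement to a single application of Lemma \ref{lemma:admissible2} with the trivial starting index $a=0$. Since $m\in I_\alpha$ we know $\gamma_m=\pm\alpha$, so I would assume $\gamma_m=-\alpha$ and then contradict the minimality of $m$ in $I_\alpha$.

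First I would record that $\gamma_i=\beta_i>0$ for every $i\leq j_1$ (before the first folding position no reflection is applied), so the assumption $\gamma_m=-\alpha<0$ already forces $m>j_1$. Define $b$ by $j_b<m\leq j_{b+1}$ (with $b=s$ when $m>j_s$); then $b\geq 1$, and by the definition of $\gamma_m$ we have $\gamma_m=r_{j_1}\cdots r_{j_b}(\beta_m)$. Applying the inverse reflection word $r_{j_b}\cdots r_{j_1}$ to the equation $r_{j_1}\cdots r_{j_b}(\beta_m)=-\alpha$ and using that each $r_{j_a}$ is an involution gives $r_{j_b}\cdots r_{j_1}(\alpha)=-\beta_m<0$, since $\beta_m$ is a positive root.

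Now I would invoke Lemma \ref{lemma:admissible2} with $a=0$: the first hypothesis is then void, and the second, namely $r_{j_b}\cdots r_{j_1}(\alpha)<0$, is exactly what we just established. As $\alpha$ is a simple root, the lemma applies and produces an index $i$ with $0\leq i<b$ such that $\gamma_{j_{i+1}}=\alpha$. But then $j_{i+1}\leq j_b<m$ while $\gamma_{j_{i+1}}=\alpha$, so $j_{i+1}\in I_\alpha$ lies strictly below $m$, contradicting the choice of $m$ as the minimum of $I_\alpha$. Hence $\gamma_m=\alpha$.

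The proof is short and the only delicate points are the sign bookkeeping for $r_{j_b}\cdots r_{j_1}(\alpha)$ and, more conceptually, the recognition that one should run the argument all the way from the identity rather than from a local predecessor as in Proposition \ref{prop:A}; this is precisely what the minimality of $m$ buys. The essential place where the hypothesis that $\alpha$ is simple (not $\theta$) enters is exactly this choice $a=0$: it is legitimate only because the void first condition stands in for the genuine inequality $\alpha>0$, which holds for simple roots but fails for $\theta=-\widetilde{\alpha}$.
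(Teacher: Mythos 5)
Your proof is correct and is essentially the paper's own argument: the paper proves Proposition \ref{prop:A1} by noting that the proof of Proposition \ref{prop:A} carries through with $a=0$, which is precisely your contradiction via Lemma \ref{lemma:admissible2} starting from the identity, with the minimality of $m$ in $I_\alpha$ supplying the contradiction. Your closing remark about why simplicity of $\alpha$ (as opposed to $\theta$) is what legitimizes the choice $a=0$ also matches the paper's sign conventions.
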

	\begin{proof}
		The proof of Proposition \ref{prop:A} carries through with
		$a=0$.
	\end{proof}
\begin{proposition}
		\label{prop:B}
		Let $J = \left\{ j_1 < j_2 < \cdots < j_s \right\} $
        be an admissible subset. Assume that $\alpha$ is a simple root or $\theta$. Suppose that 
		$I_{\alpha}\ne \emptyset$, and 
		$(\gamma_m,\varepsilon_m) \in\{ (\alpha,1), (-\alpha,-1) \}$ for $m=\max I_{\alpha}.$ Then we have 
		$\langle \gamma_{\infty},\alpha^{\vee} \rangle >0$.
	\end{proposition}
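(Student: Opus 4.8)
The plan is to translate the claim $\langle \gamma_\infty, \alpha^\vee \rangle > 0$ into a pure statement about the sign of a root. Setting $w := r_{j_1}\cdots r_{j_s}$, so that $\gamma_\infty = w(\rho)$, and using the $W$-invariance of the scalar product together with $w^{-1}(\alpha^\vee) = (w^{-1}\alpha)^\vee$, I would first record
\[
\langle \gamma_\infty, \alpha^\vee \rangle = \langle \rho,\, (r_{j_s}\cdots r_{j_1}(\alpha))^\vee \rangle.
\]
Since $\langle \rho, \beta^\vee \rangle$ is a positive (resp.\ negative) integer exactly when $\beta$ is a positive (resp.\ negative) root, the proposition is equivalent to showing $r_{j_s}\cdots r_{j_1}(\alpha) > 0$. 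I abbreviate $\eta_t := r_{j_t}\cdots r_{j_1}(\alpha)$, so the goal becomes $\eta_s > 0$.

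The next step is to locate an index $a$ at which $\eta_a > 0$ is guaranteed by the hypothesis on $m=\max I_\alpha$, and such that every later folding position exceeds $m$. I split according to the two cases of the hypothesis. If $(\gamma_m,\varepsilon_m)=(\alpha,1)$, then $m\notin J$; letting $a$ be determined by $j_a < m < j_{a+1}$, the identity $\gamma_m = r_{j_1}\cdots r_{j_a}(\beta_m)=\alpha$ gives $\eta_a = \beta_m > 0$. If $(\gamma_m,\varepsilon_m)=(-\alpha,-1)$, then $m=j_a\in J$, and $\gamma_{j_a}=r_{j_1}\cdots r_{j_{a-1}}(\beta_{j_a})=-\alpha$ yields $\eta_{a-1}=-\beta_{j_a}$; applying $r_{j_a}=s_{\beta_{j_a}}$ then flips this to $\eta_a = \beta_{j_a} > 0$. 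In either case $\eta_a>0$, and because $m=\max I_\alpha$, every index $j_t$ with $t>a$ satisfies $j_t > m$ and hence $\gamma_{j_t}\ne \pm\alpha$.

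Finally I would argue by contradiction. Suppose $\eta_s < 0$. Since $\eta_a > 0 > \eta_s$ with $a < s$, Lemma \ref{lemma:admissible2} (applicable because $\alpha$ is a simple root or $\theta$ and $J$ is admissible) produces an index $i$ with $a\le i < s$ and $\gamma_{j_{i+1}}=\alpha$. But $i+1>a$ forces $j_{i+1}\ge j_{a+1}>m$, so $j_{i+1}\in I_\alpha$ would lie strictly beyond $\max I_\alpha$, a contradiction. Hence $\eta_s>0$, which gives $\langle\gamma_\infty,\alpha^\vee\rangle>0$.

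I expect the main subtlety to be the bookkeeping in the second case $(\gamma_m,\varepsilon_m)=(-\alpha,-1)$, where one must apply the extra reflection $r_{j_a}$ to recover a \emph{positive} anchor $\eta_a$, and in correctly handling the boundary situations (such as $a=0$, or $m$ exceeding every folding position), where the sign-change argument becomes vacuous but still delivers the conclusion directly.
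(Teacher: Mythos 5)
Your proof is correct and takes essentially the same route as the paper's: the same reduction of $\langle\gamma_\infty,\alpha^\vee\rangle>0$ to the positivity of $r_{j_s}\cdots r_{j_1}(\alpha)$ (used implicitly in the paper), the same anchor index $a$ in the two cases $(\gamma_m,\varepsilon_m)=(\alpha,1)$ with $j_a<m<j_{a+1}$ and $(\gamma_m,\varepsilon_m)=(-\alpha,-1)$ with $m=j_a$, and the same contradiction via Lemma \ref{lemma:admissible2} against the maximality of $m$ in $I_\alpha$. The only difference is presentational: you make the $\rho$-pairing step and the boundary cases (including $a=s$, and $a>0$ when $\alpha=\theta$) explicit, which the paper leaves implicit.
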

		\begin{proof}
			Assume that the conclusion fails, which means that 
			$r_{j_s}\dots r_{j_1}(\alpha)<0$. 
			First suppose that $(\gamma_m,\varepsilon_m)=(\alpha,1)$. 
			Define the index $a$ 
			by the condition $j_a < m < j_{a+1}$. (If $a=0$ or $a=s$ 
			one of the two inequalities is dropped). We have 
			$r_{j_1}\dots r_{j_{a}}(\beta_m)=\alpha$, so 
			$r_{j_a}\dots r_{j_1}(\alpha)>0$ (hence $a \ne s$). 
			Note that if $\alpha=\theta$, then $a>0$.
			We now apply Lemma \ref{lemma:admissible2} 
			to conclude that $\gamma_{j_i}=\alpha$ for $i \in [a+1,s]$.
			Since $m<j_{a+1}\leq j_s$, this contradicts that $m=\max I_{\alpha}$. 

			Now suppose that $(\gamma_m,\varepsilon_m)=(-\alpha,-1)$.
			In this case we define the index $a$ by $j_a = m <j_{a+1}$. We have  
			$r_{j_1}\dots r_{j_{a-1}}(\beta_{j_a})=-\alpha$, so 
			$r_{j_a}\dots r_{j_1}(\alpha)>0$. This leads to a contradiction by a similar reasoning to the one above.
		\end{proof}

\begin{proposition}
		\label{prop:B1}
	Let $J$
    be an admissible subset. Assume that, for some simple root $\alpha$, we have $I_{\alpha}= \emptyset$.
		Then  
		$\langle \gamma_{\infty},\alpha^{\vee} \rangle >0$.
	\end{proposition}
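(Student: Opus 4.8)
The plan is to argue by contradiction, paralleling the structure of the proof of Proposition \ref{prop:B}, but now exploiting the emptiness of $I_\alpha$ directly rather than a hypothesis on $\max I_\alpha$.

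First I would record the standard reformulation of the conclusion. Since $\gamma_{\infty} = r_{j_1}\cdots r_{j_s}(\rho)$ and $W$ preserves the scalar product, we have
\[
\langle \gamma_{\infty}, \alpha^{\vee}\rangle = \langle \rho,\, (r_{j_s}\cdots r_{j_1}(\alpha))^{\vee}\rangle,
\]
and for any root $\beta$ the quantity $\langle \rho, \beta^{\vee}\rangle$ is positive precisely when $\beta>0$ (it is $\geq 1$ for positive roots and $\leq -1$ for negative roots). Hence $\langle \gamma_{\infty}, \alpha^{\vee}\rangle > 0$ is equivalent to $r_{j_s}\cdots r_{j_1}(\alpha) > 0$, which is exactly the equivalence already invoked at the start of the proof of Proposition \ref{prop:B}.

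Next I would suppose, for contradiction, that the conclusion fails, i.e. $r_{j_s}\cdots r_{j_1}(\alpha) < 0$. Since $\alpha$ is a simple root it is positive, so the empty product gives $\alpha > 0$; thus both hypotheses of Lemma \ref{lemma:admissible2} hold with $a=0$ (the first condition being void) and $b=s$. The lemma then produces an index $i$ with $0 \leq i < s$ such that $\gamma_{j_{i+1}} = \alpha$. In particular $j_{i+1} \in I_{\alpha}$, contradicting the assumption $I_{\alpha} = \emptyset$. This completes the argument.

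I expect the only point requiring care to be the verification that Lemma \ref{lemma:admissible2} applies with $a=0$: this is precisely where the hypothesis that $\alpha$ is simple (rather than $\theta$) is used, since the $\theta$-cases of Propositions \ref{prop:B} and the preceding results always required $a>0$, whereas here the positivity of the simple root $\alpha$ makes the $a=0$ initial condition automatic. Beyond that, the proof is a single direct application of Lemma \ref{lemma:admissible2}, so I anticipate no substantial obstacle.
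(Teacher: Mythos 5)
Your proof is correct and is essentially the paper's own argument: the paper simply states that the proof of Proposition \ref{prop:B} carries through with $a=0$, and what you have written is exactly that specialization, made explicit via the standard equivalence $\langle \gamma_{\infty},\alpha^{\vee}\rangle>0 \iff r_{j_s}\cdots r_{j_1}(\alpha)>0$ and a single application of Lemma \ref{lemma:admissible2} with $a=0$, $b=s$, where the resulting $\gamma_{j_{i+1}}=\alpha$ contradicts $I_{\alpha}=\emptyset$ directly. Your observation that simplicity of $\alpha$ (as opposed to $\alpha=\theta$) is precisely what makes the $a=0$ initial condition void is also the right diagnosis of why the $\theta$ case is excluded here.
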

	\begin{proof}
		The proof of Proposition \ref{prop:B} carries through with $a=0$.	
	\end{proof}


	Let us now fix a simple root $\alpha$.
	We will rephrase some of the above results in a simple way in terms of $g_{\alpha}$, and we will deduce some consequences.
	Assume that $I_{\alpha}=\left\{ i_1 < i_2 < \dots < i_n \right\}$, so that $g_{\alpha}$ is defined 
	on $[0,n+\frac{1}{2}]$, and let $M$ be the maximum of $g_{\alpha}$. 
	Note first that the function $g_{\alpha}$ is determined by the sequence $(\sigma_1, \dots, \sigma_{n+1})$, where 
	$\sigma_j = (\sigma_{j,1},\sigma_{j,2}):= (\sgn(\gamma_{i_j}), \varepsilon_{i_j}\sgn (\gamma_{i_j}))$ for 
	$1\leq j\leq n$,  and $\sigma_{n+1}= \sigma_{n+1,1}:=\sgn (\langle \gamma_{\infty}, \alpha^{\vee} \rangle)$. From 
	Propositions \ref{prop:A}, \ref{prop:A1}, \ref{prop:B} and \ref{prop:B1} we have the following restrictions.
	\begin{enumerate}[(C1)]
	\item   $\sigma_{1,1}=1$.  
	\item	$\sigma_{j,2}=1 \Rightarrow \sigma_{j+1,1}=1$.
	\end{enumerate}
\begin{proposition}
			\label{prop:main1}
	  If $g_{\alpha}(x)=M$, then $M \in \Z_{\geq 0}$, $x=m+\frac{1}{2}$ for $0 \leq m \leq n$, 
	  and $\sigma_{m+1} \in \left\{ (1,-1),1 \right\}$.
\end{proposition}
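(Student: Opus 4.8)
The plan is to exploit the explicit piecewise-linear description of $g_\alpha$ together with the two structural constraints (C1) and (C2) just extracted from Propositions \ref{prop:A}--\ref{prop:B1}. Recall that $g_\alpha$ is assembled from segments of length $\frac12$ and slope $\pm1$: on $(k-1,k-\frac12)$ the slope is $\sigma_{k,1}$, on $(k-\frac12,k)$ it is $\sigma_{k,2}$, and on $(n,n+\frac12)$ it is $\sigma_{n+1,1}=\sgn(\langle\gamma_\infty,\alpha^\vee\rangle)$. Since the slopes are never $0$, $g_\alpha$ is strictly monotone on each half-segment. Moreover, tracking the value from $g_\alpha(0)=-\frac12$ by half-steps shows that the half-integer abscissae $k-\frac12$ carry integer values (this is exactly $\sgn(\alpha)\l{i_k}=g_\alpha(k-\frac12)$ from \eqref{eqn:graph_height}), whereas the interior integer abscissae carry half-integer (non-integer) values; this is the fact that will force $M\in\Z$ once I locate the maximum at a half-integer point.

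First I would rule out that $M$ is attained at an interior integer point $x=k$ with $1\le k\le n$. If $g_\alpha(k)=M$, then $M\ge g_\alpha(k-\frac12)$ and $M\ge g_\alpha(k+\frac12)$, so $\frac12\sigma_{k,2}=g_\alpha(k)-g_\alpha(k-\frac12)\ge0$ and $\frac12\sigma_{k+1,1}=g_\alpha(k+\frac12)-g_\alpha(k)\le0$, forcing $\sigma_{k,2}=1$ and $\sigma_{k+1,1}=-1$; this contradicts (C2), which asserts $\sigma_{k,2}=1\Rightarrow\sigma_{k+1,1}=1$. The left endpoint is excluded as well, since $g_\alpha(0)=-\frac12<0=g_\alpha(\frac12)$ by (C1). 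Hence every $x$ with $g_\alpha(x)=M$ equals $m+\frac12$ for some $0\le m\le n$, which is the first assertion. Because such a point carries an integer value and $M\ge g_\alpha(\frac12)=0$, we also obtain $M\in\Z_{\ge0}$, the third assertion.

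It then remains to pin down $\sigma_{m+1}$ at a maximizing abscissa $x=m+\frac12$. Maximality gives $g_\alpha(m+\frac12)\ge g_\alpha(m)$, and since this difference is $\frac12\sigma_{m+1,1}$, we get $\sigma_{m+1,1}=1$ (for $m=0$ this recovers (C1) directly). If $m=n$, then $x=n+\frac12$ is the right endpoint and $\sigma_{m+1}=\sigma_{n+1}=\sigma_{n+1,1}=1$. If $m<n$, maximality also gives $M\ge g_\alpha(m+1)$, and $g_\alpha(m+1)-g_\alpha(m+\frac12)=\frac12\sigma_{m+1,2}$ forces $\sigma_{m+1,2}=-1$, so $\sigma_{m+1}=(1,-1)$. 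In both cases $\sigma_{m+1}\in\{(1,-1),1\}$, completing the proof.

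I do not anticipate a genuine obstacle: the entire content is the translation of (C1) and (C2) into slope inequalities at a maximum, supplemented by the parity bookkeeping showing that half-integer abscissae carry integer values. The only points demanding care are the boundary cases $m=0$ and $m=n$, where one of the two one-sided slope comparisons is either supplied by (C1) or replaced by the endpoint condition governing $\sigma_{n+1,1}$.
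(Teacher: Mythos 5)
Your proof is correct and takes essentially the same approach as the paper's: exclude $x=0$ using (C1), exclude the interior integer abscissae using (C2) via the slope comparison at a maximum, and then read off $\sigma_{m+1}$ and the integrality of $M$ from the $\pm 1$ slopes and the fact that half-integer abscissae carry the integer values $\sgn(\alpha)\l{i_k}$. The paper's proof is simply a terser version of this argument, dismissing the identification of $\sigma_{m+1}$ and the claim $M\in\Z_{\geq 0}$ as obvious, which you have spelled out carefully (including the boundary cases $m=0$ and $m=n$).
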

		\begin{proof}
			By (C1), we have $M\geq 0$, therefore
			$g_{\alpha}(0) = - \frac{1}{2} \ne M$. 
			For $m \in \left\{ 1, \dots, n \right\}$,  
			if $g_{\alpha}(m)=M$ then $\sigma_{m,2}=1$, and (C2) leads to a contradiction.
			The last statement is obvious.
		\end{proof}
		We use Proposition \ref{prop:main1} implicitly in the proof of 
		Proposition \ref{prop:main2} and Proposition \ref{prop:main3}.
	\begin{proposition}
		\label{prop:main2}
		Assume that $M>0$, and let $m$ be such that $m+ \frac{1}{2} = \min g_{\alpha}^{-1}(M)$. We have $m>0$,
		$\sigma_m = (1,1)$, and $g_{\alpha}(m-\frac{1}{2})= M-1$. Moreover, we have 
		$g_{\alpha}(x) \leq M-1$ for $0 \leq x \leq m- \frac{1}{2}$. 
	\end{proposition}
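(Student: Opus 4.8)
The plan is to run everything through two integrality features of $g_{\alpha}$, together with the constraints (C1) and (C2) and Proposition \ref{prop:main1}. The key observation I would record first is that, by \eqref{eqn:graph_height}, the values of $g_{\alpha}$ at the half-integer nodes $k-\frac12$ are integers, whereas the slopes are all $\pm 1$; hence at every integer node $k$ the value lies in $\Z+\frac12$. Since $M\in\Z$ by Proposition \ref{prop:main1}, the maximum $M$ can be attained only at half-integer nodes, and any value lying in the range $(M-1,M]$ that occurs at an integer node must equal $M-\frac12$. The bound $m>0$ is then immediate: if $m=0$ we would have $g_{\alpha}(\tfrac12)=M$, but (C1) forces $g_{\alpha}(\tfrac12)=g_{\alpha}(0)+\tfrac12=0$, contradicting $M\ge 1$ (recall $M>0$ and $M\in\Z$).

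Next I would analyze the two half-steps immediately left of the leftmost maximizer $m+\frac12$. Proposition \ref{prop:main1} gives $\sigma_{m+1,1}=1$ (this holds whether $\sigma_{m+1}=(1,-1)$ or $\sigma_{m+1}=1$), so the slope on $(m,m+\frac12)$ is $+1$ and thus $g_{\alpha}(m)=M-\frac12$. The slope on $(m-\frac12,m)$ is $\sigma_{m,2}$; were it $-1$ we would get $g_{\alpha}(m-\frac12)=M$, contradicting that $m+\frac12=\min g_{\alpha}^{-1}(M)$. Hence $\sigma_{m,2}=1$ and $g_{\alpha}(m-\frac12)=M-1$, which already establishes the asserted value and the second coordinate of $\sigma_m$.

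The heart of the argument is the global bound $g_{\alpha}(x)\le M-1$ on $[0,m-\frac12]$, which I would prove by contradiction. As a piecewise-linear function, $g_{\alpha}$ attains its maximum over $[0,m-\frac12]$ at a node or endpoint; suppose this maximum is $>M-1$ and occurs at $x^{*}$. The endpoints are harmless ($g_{\alpha}(0)=-\frac12\le M-1$ since $M\ge1$, and $g_{\alpha}(m-\frac12)=M-1$), so $x^{*}$ is an interior node. If $x^{*}$ is a half-integer node, its value is an integer $>M-1$, hence $=M$, contradicting the leftmost-ness of $m+\frac12$. If $x^{*}$ is an integer node $k$ with $1\le k\le m-1$, its value lies in $(\Z+\frac12)\cap(M-1,M]$, forcing $g_{\alpha}(k)=M-\frac12$; being a maximizer, $k$ is a local maximum, so $g_{\alpha}(k-\frac12)\le g_{\alpha}(k)$ and $g_{\alpha}(k+\frac12)\le g_{\alpha}(k)$ give $\sigma_{k,2}=1$ and $\sigma_{k+1,1}=-1$, contradicting (C2). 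I expect this to be the main obstacle, since it is exactly where the integrality bookkeeping must be married to the sign rule (C2); the surrounding steps are short.

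Finally, armed with $g_{\alpha}(x)\le M-1$ on $[0,m-\frac12]$, I would pin down the first coordinate $\sigma_{m,1}=1$. The slope on $(m-1,m-\frac12)$ is $\sigma_{m,1}$, so $g_{\alpha}(m-1)=(M-1)-\tfrac12\sigma_{m,1}$; if $\sigma_{m,1}=-1$ this equals $M-\frac12>M-1$, contradicting the bound at $x=m-1$ (legitimate because $m\ge1$). Therefore $\sigma_{m,1}=1$, yielding $\sigma_m=(1,1)$ and completing the proposition.
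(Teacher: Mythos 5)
Your proof is correct and follows essentially the same approach as the paper: a case analysis of the piecewise-linear graph $g_{\alpha}$ using (C1), (C2), Proposition \ref{prop:main1}, and contradictions with the leftmost-maximizer property of $m+\frac{1}{2}$. The only differences are organizational — you derive $\sigma_{m,1}=1$ as a corollary of the global bound $g_{\alpha}(x)\leq M-1$ (whereas the paper rules out $\sigma_m=(-1,1)$ directly via (C1) and (C2) before proving that bound), and in proving the bound you contradict (C2) at the offending integer node itself, while the paper steps one half-segment to the left to manufacture a second maximizer; both variants are sound.
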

		\begin{proof}
			By (C1) we have $g_{\alpha}(\frac{1}{2})=0$, so $m>0$. 
			If 
			$\sigma_m \in \{(-1,-1), (1,-1)\}$, then we have 
			$g_{\alpha}(m-\frac{1}{2})=M$, 
			which contradicts the definition of $m$. 
			If $\sigma_m=(-1,1)$, then $g_{\alpha}(m-1)=M-\frac{1}{2}>-\frac{1}{2}$. By (C1) we have $m \geq 2 $, and by (C2) 
			we have $\sigma_{m-1,2}=-1$. This implies that $g_{\alpha}(m-\frac{3}{2})=M$, contradicting the definition of $m$. Hence $\sigma_m=(1,1)$.

			Suppose by way of contradiction that the last statement in the corollary fails. Then there exists a $k$ with
			$1 \leq k \leq m-1$ such that $g_{\alpha}(k-1)=M-\frac{1}{2}>-\frac{1}{2}$ and 
			$\sigma_{k,1}=-1$. 
			Condition (C1) implies that $k \geq 2$ and Condition (C2) implies
			$\sigma_{k-1,2}=-1$. 
			This implies $g_{\alpha}(k-\frac{3}{2})=M$, contradicting the definition of $m$.
		\end{proof}

	\begin{proposition}
		\label{prop:main3}
		Assume that $M> g_{\alpha}(n+\frac{1}{2})$, and let $k$ be such that $k-\frac{1}{2}= \max 
		g_{\alpha}^{-1}(M)$.  We have $k \leq n, \sigma_{k+1} \in \{ (-1, -1), -1\}$, and  $g_{\alpha}(k+\frac{1}{2})
		=M-1$. Moreover, we have $g_{\alpha}(x) \leq M-1$  for 
		$k + \frac{1}{2} \leq x \leq n+ \frac{1}{2}$.
	\end{proposition}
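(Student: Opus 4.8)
The plan is to run the proof of Proposition \ref{prop:main2} in its left--right mirror image, interchanging the roles of ``ascending into the leftmost maximum'' and ``descending out of the rightmost maximum''. One point to watch is that condition (C1) is a left-boundary condition with no analogue on the right; its bookkeeping role will instead be played by the hypothesis $M>g_{\alpha}(n+\frac{1}{2})$ together with (C2). Throughout I would use that $g_{\alpha}$ has all slopes equal to $\pm 1$ and $g_{\alpha}(0)=-\frac12$, so that $g_{\alpha}$ takes integer values at the half-integer breakpoints $j+\frac12$ and values in $\Z+\frac12$ at the integer breakpoints; by Proposition \ref{prop:main1} we have $M\in\Z_{\ge 0}$ and every point where $g_\alpha$ attains $M$ is a half-integer. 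In particular, at a breakpoint the inequality $g_{\alpha}>M-1$ forces the value $M$ (at a half-integer breakpoint) or $M-\frac12$ (at an integer breakpoint).

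First I would pin down the behaviour at the rightmost maximum $k-\frac12$. Since $M>g_{\alpha}(n+\frac12)$, this maximum is not attained at $n+\frac12$, so $k-\frac12\le n-\frac12$ and hence $k\le n$. Reading off the two half-steps immediately to the right of $k-\frac12$: if the slope $\sigma_{k,2}$ on $(k-\frac12,k)$ were $+1$ then $g_{\alpha}(k)=M+\frac12>M$, which is impossible, so $\sigma_{k,2}=-1$ and $g_{\alpha}(k)=M-\frac12$; then if the slope $\sigma_{k+1,1}$ on $(k,k+\frac12)$ were $+1$ we would get $g_{\alpha}(k+\frac12)=M$, contradicting that $k-\frac12$ is the rightmost maximum, so $\sigma_{k+1,1}=-1$ and $g_{\alpha}(k+\frac12)=M-1$. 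This already yields the value statement, and it yields $\sigma_{k+1}=-1$ in the boundary case $k=n$ (where $\sigma_{k+1}=\sigma_{n+1,1}$ by convention). For $k<n$ it remains to show $\sigma_{k+1,2}=-1$: if instead $\sigma_{k+1,2}=+1$, then $g_{\alpha}(k+1)=M-\frac12$, and (C2) forces $\sigma_{k+2,1}=+1$, whence $g_{\alpha}(k+\frac32)=M$, again contradicting maximality of $k-\frac12$; hence $\sigma_{k+1}=(-1,-1)$.

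For the ``moreover'' clause I would argue by contradiction: if $g_{\alpha}(x)>M-1$ for some $x\in[k+\frac12,n+\frac12]$, then since $g_{\alpha}$ is linear between consecutive breakpoints, some breakpoint in this range already exceeds $M-1$. A half-integer breakpoint to the right of $k-\frac12$ cannot equal $M$ by maximality, and being an integer $\le M$ it is $\le M-1$; so the offending breakpoint is an integer $p$ with $k+1\le p\le n$ and $g_{\alpha}(p)=M-\frac12$. The preceding half-integer point satisfies $g_{\alpha}(p-\frac12)\le M-1$ (it equals $M-1$ when $p-\frac12=k+\frac12$, and otherwise lies strictly to the right of the rightmost maximum), so the slope $\sigma_{p,2}$ on $(p-\frac12,p)$ must be $+1$; then (C2) gives $\sigma_{p+1,1}=+1$, so $g_{\alpha}(p+\frac12)=M$ with $p+\frac12>k-\frac12$, contradicting maximality of $k-\frac12$. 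The main obstacle, exactly as in Proposition \ref{prop:main2}, is controlling the fact that $g_\alpha$ never climbs back to $M$ to the right of $k-\frac12$; the mechanism is that any upward half-step ($\sigma_{\cdot,2}=+1$) landing at an integer point of value $M-\frac12$ is, through (C2), immediately followed by a second upward half-step, manufacturing a new maximum. Handling the boundary interval $(n,n+\frac12)$ uniformly via the convention $\sigma_{n+1}=\sigma_{n+1,1}$ is the only additional care required.
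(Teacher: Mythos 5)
Your proof is correct and follows essentially the same route as the paper's: the same case analysis at the rightmost maximum (ruling out an upward half-step via maximality, and ruling out $\sigma_{k+1}=(-1,1)$ via (C2)), and the same contradiction argument for the final clause, where an integer breakpoint of value $M-\tfrac{1}{2}$ approached from below forces, through (C2), a new maximum to the right of $k-\tfrac{1}{2}$. The extra bookkeeping you make explicit (integer values at half-integer breakpoints, the convention $\sigma_{n+1}=\sigma_{n+1,1}$ at the boundary) is exactly what the paper uses implicitly via Proposition \ref{prop:main1}.
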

	\begin{proof}
	Since $	M> g_{\alpha}(n+\frac{1}{2})$, it follows that $k\leq n$.
	If $\sigma_{k+1} \in \{ (1,1),(1,-1),1 \}$ then $g_{\alpha}(k+\frac{1}{2})=M$,
	contradicting the choice of $k$. If $\sigma_{k+1}=(-1,1)$, then by (C2) we have 
	$\sigma_{k+2,1}=1$, and $g_{\alpha}(k+\frac{3}{2})=M$, contradicting the choice of 
	$k$. Hence $\sigma_{k+1} \in \{(-1,-1),-1 \}$.

	Suppose by way of contradiction the last statement in the corollary fails. Then there
	exists an $m$ with $k+2 \leq m \leq n$ such that $g_{\alpha}(m)=M-\frac{1}{2}$
	and $\sigma_{m,2}=1$. Condition (C2) implies that $\sigma_{m+1,1}=1$, so 
	$g_{\alpha}(m+\frac{1}{2})=M$, contradicting the choice of $k$.
	\end{proof}

We now consider $g_{\theta}$. Since $\theta<0$, the definition of the piecewise linear function $g_\theta$ requires us to define its linear steps by 
$\sigma_j = (\sigma_{j,1},\sigma_{j,2}):= (-\sgn(\gamma_{i_j}),-\varepsilon_{i_j}\sgn (\gamma_{i_j}))$ for $1 \leq j \leq n$, and 
$\sigma_{n+1}=\sigma_{n+1,1}:=\sgn (\langle \gamma_{\infty}, \theta^{\vee} \rangle)$. 
From Propositions \ref{prop:A} and \ref{prop:B} we conclude that condition (C2) holds for 
$g_{\theta}$. We can replace condition (C1) by restricting to admissible subsets
$J$ where $M$ is large enough, as we will now explain.
In the proof of Proposition \ref{prop:main1}, condition (C1) is needed to conclude that 
$g_{\alpha}(0) \ne M$. It is possible that $g_{\theta}(0)=M$, but if we restrict to
$g_{\theta}$ where $M\geq 1$ we can conclude that $g_{\theta}(0)=\frac{1}{2} \ne M$, and the
rest of the proof follows through.
In the proof of Proposition \ref{prop:main2}, condition (C1) first implies $m\ge 1$; we 
can conclude the same thing if we assume $M\ge 2$, since $g_\theta(\frac{1}{2})\le 1$. Then we
need to derive $m\ge 2$ from $g_\theta(m-1)=M-\frac{1}{2}$; again, if $M\ge 2$, then $M-\frac{1}{2}>g_\theta(0)=\frac{1}{2}$, so $m-1>0$. 
Note that Proposition \ref{prop:main3} depends on Proposition \ref{prop:main1} so we need to assume $M \geq 1$ here too. We have therefore proved the following propositions.

\begin{proposition}
	\label{prop:main1_theta}
	  Suppose $M\geq 1$. If $g_{\theta}(x)=M$, then $M \in \Z_{\geq 1}$, $x=m+\frac{1}{2}$ for $0 \leq m \leq n$,  
	  and $\sigma_{m+1} \in \left\{ (1,-1),1 \right\}$.
\end{proposition}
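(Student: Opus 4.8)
The plan is to run the proof of Proposition~\ref{prop:main1} essentially verbatim for $g_\theta$, replacing the one use of condition (C1) by the hypothesis $M \geq 1$. As the discussion preceding the statement records, the conclusions of Propositions~\ref{prop:A} and~\ref{prop:B} still apply to $\theta$, so condition (C2), namely $\sigma_{j,2}=1 \Rightarrow \sigma_{j+1,1}=1$, remains available exactly as before; only (C1) is unavailable. In the original argument (C1) served a single purpose: to guarantee $g_\alpha(0)=-\tfrac12 \neq M$ via $M\geq 0$. For $\theta$ I would instead observe directly that $g_\theta(0)=\tfrac12$ --- since $g_\theta$ is the reflection of $g_{-\theta}$ in the $x$-axis and $g_{-\theta}(0)=-\tfrac12$ --- so that under $M\geq 1$ one gets $g_\theta(0)=\tfrac12 < 1 \leq M$, hence $g_\theta(0)\neq M$.

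First I would rule out the integer vertices. For $m\in\{1,\dots,n\}$ the slope of $g_\theta$ immediately to the left of $x=m$ is $\sigma_{m,2}$; if $g_\theta(m)=M$, then since $M$ is the maximum this slope cannot be negative, forcing $\sigma_{m,2}=1$. Condition (C2) then gives $\sigma_{m+1,1}=1$, so the slope immediately to the right of $x=m$ is $+1$ and $g_\theta(m+\tfrac12)=M+\tfrac12 > M$, a contradiction. Together with $g_\theta(0)\neq M$, this shows that $M$ is attained only at half-integer arguments $x=m+\tfrac12$ with $0\leq m\leq n$, which is the first assertion.

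Next I would read off the sign condition at such a peak. If $m<n$, the slopes of $g_\theta$ to the left and right of $x=m+\tfrac12$ are $\sigma_{m+1,1}$ and $\sigma_{m+1,2}$; maximality forces the former to be $+1$ and the latter to be $-1$, i.e.\ $\sigma_{m+1}=(1,-1)$. If $m=n$, the only relevant (left) slope is $\sigma_{n+1,1}=\sgn(\inner{\gamma_\infty}{\theta^\vee})$, which maximality forces to be $+1$, i.e.\ $\sigma_{n+1}=1$; together these give $\sigma_{m+1}\in\{(1,-1),1\}$.

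Finally, for integrality I would track parities: $g_\theta(0)=\tfrac12$ and $g_\theta$ changes by $\pm\tfrac12$ across each length-$\tfrac12$ subinterval, so its value at a half-integer argument (an odd number of steps from $0$) is always an integer. Hence $M\in\Z$, and combined with $M\geq 1$ this gives $M\in\Z_{\geq 1}$. I do not expect a genuine obstacle here: the one delicate point is that the sign conventions $\sigma_j=(-\sgn(\gamma_{i_j}),-\varepsilon_{i_j}\sgn(\gamma_{i_j}))$ for $\theta$ differ from the simple-root case, so I would take care to confirm that (C2) is stated for exactly these conventions (it is, by the preceding discussion) and that the slope-to-sign dictionary $\sigma_{k,1}$ on $(k-1,k-\tfrac12)$, $\sigma_{k,2}$ on $(k-\tfrac12,k)$ is applied consistently throughout.
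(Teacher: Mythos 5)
Your proof is correct and takes essentially the same route as the paper: the discussion preceding Proposition \ref{prop:main1_theta} likewise observes that (C2) persists for $g_\theta$ via Propositions \ref{prop:A} and \ref{prop:B}, and that the hypothesis $M\geq 1$ replaces (C1) exactly at its single point of use, namely to conclude $g_\theta(0)=\frac{1}{2}\ne M$, after which the proof of Proposition \ref{prop:main1} carries through. Your spelled-out slope and parity arguments are just the details that the paper leaves implicit in that proof.
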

	\begin{proposition}
		\label{prop:main2_theta}
		Assume that $M\geq2$, and let $m$ be such that 
		$m+ \frac{1}{2} = \min g_{\theta}^{-1}(M)$. We have $m>0$,
		$\sigma_m = (1,1)$, and $g_{\theta}(m-\frac{1}{2})= M-1$. Moreover, we have 
		$g_{\theta}(x) \leq M-1$ for $0 \leq x \leq m- \frac{1}{2}$. 
\end{proposition}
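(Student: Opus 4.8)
The plan is to follow the proof of Proposition \ref{prop:main2} almost verbatim, now for $g_{\theta}$ in place of $g_{\alpha}$, paying attention to the two places where that proof invokes condition (C1). Recall from the discussion preceding Proposition \ref{prop:main1_theta} that $g_{\theta}$ has the same piecewise-linear shape as $g_{\alpha}$, encoded by $(\sigma_1, \dots, \sigma_{n+1})$, that condition (C2) continues to hold, and that the only structural changes are $g_{\theta}(0) = \frac{1}{2}$ (instead of $-\frac{1}{2}$) and the possible failure of (C1). I would use Proposition \ref{prop:main1_theta} (applicable since $M \geq 2 \geq 1$) to know that $M \in \Z$ and that $g_{\theta}$ attains $M$ only at half-integer points; the defining minimality of $m + \frac{1}{2}$ then gives $g_{\theta}(x) < M$ for all $x < m + \frac{1}{2}$.

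First I would establish $m > 0$: since $g_{\theta}(\frac{1}{2}) = \frac{1}{2} + \frac{1}{2}\sigma_{1,1} \leq 1 < M$, the maximum cannot occur at $\frac{1}{2}$, so $m + \frac{1}{2} > \frac{1}{2}$; here $M \geq 2$ plays the role that $g_{\alpha}(\frac{1}{2}) \leq 0 < M$ played in Proposition \ref{prop:main2}. Using $g_{\theta}(m + \frac{1}{2}) = M$ together with $\sigma_{m+1,1} = 1$ (Proposition \ref{prop:main1_theta}) gives $g_{\theta}(m) = M - \frac{1}{2}$. Then $\sigma_{m,2} = -1$ would give $g_{\theta}(m - \frac{1}{2}) = M$, contradicting minimality, so $\sigma_{m,2} = 1$ and $g_{\theta}(m - \frac{1}{2}) = M - 1$, exactly as in Proposition \ref{prop:main2}. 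To obtain $\sigma_{m,1} = 1$, I would suppose $\sigma_{m,1} = -1$; then $g_{\theta}(m-1) = M - \frac{1}{2}$, and (C2) yields $\sigma_{m-1,2} = -1$, whence $g_{\theta}(m - \frac{3}{2}) = M$, contradicting minimality, \emph{provided} $m \geq 2$. This last proviso is the one genuinely new point, and the only obstacle: in Proposition \ref{prop:main2} the inequality $m \geq 2$ came from (C1), which is now unavailable. I would supply it instead from $M \geq 2$: if $m = 1$ then $g_{\theta}(m-1) = g_{\theta}(0) = \frac{1}{2}$, which is incompatible with the relation $g_{\theta}(m-1) = M - \frac{1}{2} \geq \frac{3}{2}$ just derived; hence $m \geq 2$ and $\sigma_m = (1,1)$ follows.

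For the \emph{moreover} statement I would again copy Proposition \ref{prop:main2}. Suppose $g_{\theta}(x) > M - 1$ for some $x \in [0, m - \frac{1}{2}]$. Since $g_{\theta}(0) = \frac{1}{2}$ and all slopes are $\pm 1$, $g_{\theta}$ is integer-valued at half-integer points and lies in $\frac{1}{2} + \Z$ at integer points; as $g_{\theta} < M$ throughout $[0, m - \frac{1}{2}]$ and $M \in \Z$, the half-integer values there are $\leq M - 1$, so the violation must be a value $M - \frac{1}{2}$ occurring at an integer point $k - 1$. Comparing with $g_{\theta}(k - \frac{1}{2}) \leq M - 1$ forces $\sigma_{k,1} = -1$; then (C2) gives $\sigma_{k-1,2} = -1$ and hence $g_{\theta}(k - \frac{3}{2}) = M$, contradicting the minimality of $m + \frac{1}{2}$, provided $k \geq 2$. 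The base case $k = 1$ is ruled out by $M \geq 2$, since it would force $g_{\theta}(0) = M - \frac{1}{2} \geq \frac{3}{2}$ rather than $\frac{1}{2}$. I expect the entire content to be this systematic replacement of each appeal to (C1) by the numerical gap $M - \frac{1}{2} > \frac{1}{2} = g_{\theta}(0)$ furnished by $M \geq 2$; everything else is verbatim Proposition \ref{prop:main2}.
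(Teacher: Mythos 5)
Your proof is correct and is essentially the paper's own argument: the paper likewise proves Proposition \ref{prop:main2_theta} by rerunning the proof of Proposition \ref{prop:main2} with (C2) intact and each appeal to (C1) replaced by the numerical bound $M-\frac{1}{2}\geq\frac{3}{2}>\frac{1}{2}=g_{\theta}(0)$, which is exactly how you recover $m\geq 2$ and $k\geq 2$. Your extra detail (the parity/half-integer bookkeeping in the ``moreover'' part) only makes explicit what the paper leaves implicit.
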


	\begin{proposition}
		\label{prop:main3_theta}
		Assume $M\geq 1$, and also that $M> g_{\theta}(n+\frac{1}{2})$. Let $k$ be such that $k-\frac{1}{2}= \max 
		g_{\theta}^{-1}(M)$.  We have $k \leq n, \sigma_{k+1} \in \{ (-1, -1), -1\}$, and  $g_{\theta}(k+\frac{1}{2})
		=M-1$. Moreover, we have $g_{\theta}(x) \leq M-1$  for 
		$k + \frac{1}{2} \leq x \leq n+ \frac{1}{2}$.
	\end{proposition}

Recall from from Section \ref{subsection:crystalop} the definitions of the finite sequences
$I_{\alpha}(\Delta)$ and $\widehat{I}_{\alpha}(\Delta)$, where $\alpha$ is a root, 
of $g_{\alpha}$, as well as the related notation.

Fix $p$, so $\alpha_p$ is a simple root if $p>0$, or $\theta$ if $p=0$.
Let $M$ be the maximum of $g_{\alpha_p}$, and suppose that $M \geq \delta_{p,0}$.
Note this is always true for $p \ne 0$ by Proposition \ref{prop:main1}.
Let $m$ be the minimum index 
$i$  in $\widehat{I}_{\alpha_p}(\Delta)$ for which we have $\sgn(\alpha_p)\l{i}=M$. 
The following proposition is an immediate consequence of 
Propositions \ref{prop:main1}, \ref{prop:main2}, \ref{prop:main1_theta}, \ref{prop:main2_theta}.

\begin{proposition} Given the above setup, the following hold.
	\label{prop:rootF}
    \begin{enumerate}
        \item[{\rm (1)}] If $m \ne \infty$, then $\gamma_m=\alpha_p$ and $m \in
            J$. \label{rootFa}

        \item[{\rm (2)}] If $M>\delta_{p,0}$, then 
            $m$ has a predecessor $k$ in $\widehat{I}_{\alpha_p}(\Delta)$ such that 
            \[
                \gamma_k=\alpha_p,\, k \not \in J, \,  \mbox{and }\, \sgn(\alpha_p)\l{k} = M-1.
            \]\label{rootFb}
    \end{enumerate}
\end{proposition}

Now assume that $M>\inner{\mu(J)}{{\alpha}_p^{\vee}}$. Let $k$ be the maximum index 
$i$  in $I_{\alpha_p}(\Delta)$ for which we have $\sgn(\alpha_p)\l{i}=M$, 
and let $m$ be the successor of $k$ in $\widehat{I}_{\alpha_p}(\Delta)$. 
The following analogue of Proposition \ref{prop:rootF} is proved in a
similar way, based on Propositions \ref{prop:main1}, \ref{prop:main3}, \ref{prop:main1_theta}, \ref{prop:main3_theta}.

\begin{proposition} Given the above setup, and assuming also that $M\ge\delta_{p,0}$, the following hold.
	\label{prop:rootE}
    \begin{enumerate}
        \item[{\rm (1)}] We have $\gamma_k=\alpha_p$ and $k \in J$.
        \item[{\rm (2)}] If $m\ne \infty$, then 
            \[\gamma_m=-\alpha_p,\,  m \not \in J,\, \mbox{ and } \sgn(\alpha_p)\l{m} = M-1.\]
    \end{enumerate}
\end{proposition}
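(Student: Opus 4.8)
The plan is to mirror the proof of Proposition \ref{prop:rootF}: parts (1) and (2) will follow by reading off the structural Propositions \ref{prop:main3} and \ref{prop:main3_theta} (together with \ref{prop:main1} and \ref{prop:main1_theta}) in terms of the graph $g_{\alpha_p}$, and part (3) will follow from the already-noted equivalence $f_p(J)=J' \iff e_p(J')=J$ combined with Proposition \ref{prop:rootF}\eqref{rootFc}. Throughout, write $I_{\alpha_p}=\{i_1<\cdots<i_n\}$ and let $\kappa$ be the index with $i_\kappa=k$. Recall from \eqref{eqn:graph_height} that $\sgn(\alpha_p)\l{i_j}=g_{\alpha_p}(j-\tfrac12)$ and that $g_{\alpha_p}(n+\tfrac12)=\inner{\mu(J)}{\alpha_p^{\vee}}$. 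The first step is to observe that the condition $M>\inner{\mu(J)}{\alpha_p^{\vee}}$ that makes $e_p(J)\ne\Bzero$ is exactly $M>g_{\alpha_p}(n+\tfrac12)$, which is the hypothesis of Propositions \ref{prop:main3} and \ref{prop:main3_theta}. In particular $I_{\alpha_p}\ne\emptyset$, so $k=i_\kappa$ is well defined with $\kappa\le n$, and its defining property becomes $\kappa-\tfrac12=\max g_{\alpha_p}^{-1}(M)$.

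For part (1), since $g_{\alpha_p}(\kappa-\tfrac12)=M$ with $\kappa\le n$, Proposition \ref{prop:main1} (for $p>0$), or Proposition \ref{prop:main1_theta} (for $p=0$, where $M\ge\delta_{p,0}=1$ supplies the required hypothesis), forces $\sigma_\kappa\in\{(1,-1),1\}$; as $\kappa\le n$ the term $\sigma_\kappa$ is a genuine pair, so $\sigma_\kappa=(1,-1)$. Unwinding $\sigma_\kappa=(\sgn(\gamma_k),\varepsilon_k\sgn(\gamma_k))$ (with the extra sign flips in the $\theta$ case) yields $\gamma_k=\alpha_p$ and $\varepsilon_k=-1$, i.e.\ $k\in J$. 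For part (2), the assumption $m\ne\infty$ means $m=i_{\kappa+1}$ with $\kappa+1\le n$. Proposition \ref{prop:main3} (resp.\ \ref{prop:main3_theta}, again valid since $M\ge1$) gives $\sigma_{\kappa+1}\in\{(-1,-1),-1\}$ and $g_{\alpha_p}(\kappa+\tfrac12)=M-1$; since $\sigma_{\kappa+1}$ is a pair it equals $(-1,-1)$, which unwinds to $\gamma_m=-\alpha_p$ and $m\notin J$, while $\sgn(\alpha_p)\l{m}=g_{\alpha_p}(\kappa+\tfrac12)=M-1$.

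For part (3), rather than repeat the affine-reflection computation of Proposition \ref{prop:rootF}\eqref{rootFc}, I would set $J':=e_p(J)\ne\Bzero$ and invoke the equivalence $f_p(J')=J \iff e_p(J)=J'$ to get $f_p(J')=J$; then Proposition \ref{prop:rootF}\eqref{rootFc} applied to $J'$ gives $\mu(J)=\mu(f_p(J'))=\mu(J')-\alpha_p$, whence $\mu(e_p(J))=\mu(J')=\mu(J)+\alpha_p$. (Alternatively one can redo the $\widehat t$-conjugation computation directly, with $(J\setminus\{k\})\cup\{m\}$ in place of $(J\setminus\{m\})\cup\{k\}$.) The main obstacle is not conceptual but bookkeeping: in the $p=0$ case the definitions of the $\sigma_j$ carry sign flips because $\theta<0$, so one must carefully check that the $\theta$-versions of the structural propositions apply under $M\ge\delta_{p,0}=1$ and that no stronger bound $M\ge2$ is needed here (in contrast with Proposition \ref{prop:rootF}\eqref{rootFb}, which conditions on $M>\delta_{p,0}$). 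Once the condition $M>\inner{\mu(J)}{\alpha_p^{\vee}}$ is recognized as the hypothesis of Proposition \ref{prop:main3}, the rest is a direct reading of $\sigma_\kappa$ and $\sigma_{\kappa+1}$.
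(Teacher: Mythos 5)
Your treatment of parts (1) and (2) is exactly the paper's intended argument: the paper dispatches Proposition \ref{prop:rootE} with the remark that it is ``proved in a similar way'' to Proposition \ref{prop:rootF}, whose first two parts are immediate consequences of Propositions \ref{prop:main1}--\ref{prop:main3_theta}. Your key observation --- that the condition $M>\inner{\mu(J)}{\alpha_p^{\vee}}$ in \eqref{eqn:rootE} is, via \eqref{eqn:graph_height}, precisely the hypothesis $M>g_{\alpha_p}\left(n+\frac{1}{2}\right)$ of Propositions \ref{prop:main3} and \ref{prop:main3_theta} --- is the right entry point, the unwinding of $\sigma_{\kappa}=(1,-1)$ and $\sigma_{\kappa+1}=(-1,-1)$ (with the sign flips for $\theta$) is correct, and your remark that $M\ge\delta_{p,0}$ already suffices in the $p=0$ case (no bound $M\ge 2$ is needed, in contrast with $f_0$, because only Propositions \ref{prop:main1_theta} and \ref{prop:main3_theta} are invoked, not \ref{prop:main2_theta}) is accurate.

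Part (3) is where you diverge, and your primary route has a gap as written. Applying Proposition \ref{prop:rootF}\eqref{rootFc} to $J':=e_p(J)$ presupposes that $J'$ satisfies the setup of that proposition, i.e.\ that $J'$ is an \emph{admissible} subset: part \eqref{rootFc} rests on parts \eqref{rootFa} and \eqref{rootFb}, which in turn rest on Propositions \ref{prop:main1}--\ref{prop:main3_theta}, all of which are statements about admissible subsets. But admissibility of $e_p(J)$ is exactly Theorem \ref{theorem:admissible}, which the paper proves \emph{after} Proposition \ref{prop:rootE}, and whose $e_p$ case uses parts (1) and (2) of this very proposition. Your argument is therefore not circular, but it only becomes valid after a reordering: prove (1) and (2), then Theorem \ref{theorem:admissible}, and only then deduce (3) from the inverse relation $f_p(J')=J \iff e_p(J)=J'$. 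As stated, the invocation of Proposition \ref{prop:rootF}\eqref{rootFc} for $J'$ is unjustified. The paper's own route for (3) is your parenthetical alternative --- repeat the $\widehat{t}$-conjugation computation for $(J\setminus\{k\})\cup\{m\}$, feeding in $\gamma_k=\alpha_p$, $\sgn(\alpha_p)\l{k}=M$, $\gamma_m=-\alpha_p$, $\sgn(\alpha_p)\l{m}=M-1$ from parts (1) and (2) --- which needs nothing beyond facts already established for $J$ itself; you should either adopt that, or state the reordering explicitly.
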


\begin{proof}[Proof of Theorem {\rm \ref{theorem:admissible}}]
			Suppose $p \ne 0$. We consider $f_p$ first. 
			The cases corresponding to $m\ne \infty$ and $m=\infty$ can be proved in similar ways, 
			so we only consider the first case. Let $J=\left\{ j_1 < j_2< \ldots < j_s \right\}$, and let
			$w_i := r_{j_1}r_{j_2} \dots r_{j_i}$.
			Based on Proposition \ref{prop:rootF}, let $a<b$ be such that 
			\[ j_a < k < j_{a+1} < \dots < j_b = m < j_{b+1} \;; \]
			if $a=0$ or $b+1>s$, then the corresponding indices $j_a$, respectively $j_{b+1}$, are missing.
			To show that $(J \backslash \left\{ m \right\}) \cup \left\{ k \right\}$ 
			is an admissible subset,
			it is enough to prove that we have the path in the quantum Bruhat graph 
			\begin{equation}
				w_a \longrightarrow w_ar_{k} 
				\longrightarrow w_ar_k r_{j_{a+1}}
				\longrightarrow
				\dots
				\longrightarrow w_ar_k r_{j_{a+1}}\dots r_{j_{b-1}}
				=
			w_b\,.
			\label{newadmissible}
			\end{equation} 
			By our choice of $k$, we have  
			\begin{align}
		w_a(\beta_k)=\alpha_p \;\iff\; w_a^{-1}(\alpha_p)=\beta_k >0 
	\;\iff\; w_a \lessdot s_pw_a = w_ar_k\,.
				\label{admissiblebasecase}
			\end{align}
			So we can rewrite (\ref{newadmissible}) as
			\begin{equation}
				w_a \longrightarrow s_pw_a
				\longrightarrow s_pw_{a+1}
				\longrightarrow
				\dots
				\longrightarrow s_pw_{b-1}
				=
			w_b\,.
			\label{newadmissibleb}
			\end{equation} 

			We will now prove that (\ref{newadmissibleb}) is a path in the quantum Bruhat graph.
		Observe that, for $a<i\le b$, we have
			\begin{align*}
				s_p w_{i-1}=w_i=w_{i-1}r_{j_i} \;\iff\; w_{i-1}(\beta_{j_i})=\pm \alpha_p
				\;\iff\; j_i \in I_{\alpha}\,.
			\end{align*}
			Our choice of $k$ and $b$ implies that we have 
			\begin{equation}
				s_p w_{i-1} \ne w_i \; \text{ for } a<i<b\,,\;\;\text{ and } s_p w_{b-1}=w_b\,.
				\label{equation:tempinductionstep}
			\end{equation}
			Since $J$ is admissible, we have 
			\begin{equation}
				w_{i-1} \longrightarrow  w_i\,. 
				\label{eqn:diamond_lower}
			\end{equation}
			Starting from (\ref{admissiblebasecase}), and then using \eqref{equation:tempinductionstep}-\eqref{eqn:diamond_lower}, we can apply Lemma \ref{prop:deodhar} repeatedly to conclude that
			\begin{equation}
				\label{eqn:diamond_upper}
				s_pw_{i-1} \longrightarrow s_pw_{i} \;\text{ and }\; w_{i} \lessdot s_pw_{i}\,,\;\;
				\text{ for } a<i<b\,.
			\end{equation}

	The proof for $e_p(J)$ is similar. The main difference is that we need the ``if'' part of Lemma \ref{prop:deodhar}, whereas above we used the ``only if'' part. 

		The above proof follows through for $p=0$, based on 
		Lemma \ref{lemma:theta}, which is used to derive the analogue of \eqref{admissiblebasecase}, and Lemma \ref{prop:deodhar0}, which replaces Lemma \ref{prop:deodhar}.
		
		We can prove that $f_p(J)=J'$ if and only if  $e_p(J')=J$ based on \cite{lapcmc}[Proposition 7.4 (1)]; this still holds in the above setup (for any $p$, including $p=0$), based on Propositions \ref{prop:main1}$-$\ref{prop:main3_theta}. The same result can be invoked to derive the formulas for $\varphi_p(J)$ and $\varepsilon_p(J)$. 
		
		In order to show that $f_p$ changes weights by $-\alpha_p$, the proof of \citep{lapcmc}[Proposition 7.1 (3)] can be applied
in our context. In essence, we note that $\mu(f_p(J))$ is $-\widehat{ t}_k  \widehat{ t}_m (-\mu)$ if $ m \ne \infty$,
and $-\widehat{ t}_k (-\mu) $ otherwise, where $ \widehat{ t }_j:=s_{|\gamma_j|, -\l{j}}=s_{\gamma_j,-{\rm sgn}(\gamma_j)l_j^J}$. By Proposition \ref{prop:rootF} and \eqref{eqn:graph_height}, we have
$\widehat{ t }_k=s_{\alpha_p,-M}$ and $\widehat{ t }_m=s_{\alpha_p,-(M-1)}$. The rest of the calculation is the same as in the proof mentioned above.
		\end{proof}

\subsection{Main application}\label{mainappl}

We summarize the main results in \cite{unialcmod2}, cf. also \cite{unialcmod, lnseda}. The setup is that of untwisted affine root systems. 

\begin{theorem}{\rm \cite{unialcmod2}}
    \label{mainconj} Consider a composition ${\mathbf{p}}=(p_1,\ldots,p_k)$ and the corresponding crystal $B:=\bigotimes_{i=1}^{k} B^{p_i,1}$. Let $\lambda:=\omega_{p_1}+\ldots+\omega_{p_k}$, and let $\Gamma_{\rm lex}$ be a corresponding lex $\lambda$-chain (see above). 
    
        {\rm (1)}  The (combinatorial) crystal $\A(\Gamma_{\rm lex})$ is isomorphic to the subgraph of $B$ consisting of the dual Demazure arrows, via a specific bijection. 

        {\rm (2)}  If the vertex $b$ of $B$ corresponds to $J$ under the isomorphism in part {\rm (1)},  then 
         the energy is given by $D_B(b)-C=-\height(J)$, where $C$ is a global constant.
  \end{theorem}

\begin{remarks}\label{ndd} (1) The entire crystal $B$ is realized in \cite{unialcmod2} in terms of the so-called {\em quantum LS path model}. If we identify the two, the bijection in Theorem \ref{mainconj} (1) is the ``forgetful map'' from the quantum alcove model to the quantum LS path model, so it is a very natural map. Therefore, we think of the former model as a mirror image of the latter, via this bijection. However, if we use this identification to construct the non-dual Demazure arrows in the quantum alcove model, we quickly realize that, in general, the constructions  are considerably more involved than \eqref{eqn:rootF}-\eqref{eqn:rootE}, cf. Remark \ref{nddA} (1) and Example \ref{nddAex}. 

(2) Although the quantum alcove model so far misses the non-dual Demazure arrows, it has the advantage of being a discrete model. Therefore, combinatorial methods are applicable, for instance in proving the independence of the model from the choice of an initial alcove path (or $\lambda$-chain of roots), see below, including the application in Remark \ref{perfectcase} (2). This should be compared with the continuous arguments used for the similar purpose in the Littelmann path model \cite{litpro}. 

(3) Theorem \ref{mainconj}, combined with the Ram-Yip formula for Macdonald polynomials \cite{raycfm}, implies that the graded character of a tensor product of column shape KR modules (the grading being by the energy function) concides with the corresponding Macdonald polynomial specialized at $t=0$ \cite{unialcmod2}. 
\end{remarks}

Based on Theorem \ref{mainconj} (1), as well as on the realization of the same subgraph of $B$ in types $A$ and $C$ in terms of a different $\lambda$-chain (see Theorems \ref{theorem:crystal_isomorphism} and \ref{theorem:crystal_isomorphismTypeC}), we make the following conjecture.

\begin{conjecture}\label{mainmainconj}
 Theorem {\rm \ref{mainconj}} holds for any choice of a $\lambda$-chain (instead of a lex $\lambda$-chain). 
\end{conjecture}

We plan to prove this conjecture in \cite{lalurc} by using Theorem \ref{mainconj} as the starting point. Then, given two $\lambda$-chains $\Gamma$ and $\Gamma'$, we would construct a bijection between $\A(\Gamma)$ and $\A(\Gamma')$ preserving the dual Demazure arrows and the height statistic; this would mean that the quantum alcove model does not depend on the choice of a $\lambda$-chain. This construction will be based on generalizing to the quantum alcove model the so-called Yang-Baxter moves in \cite{lenccg}. As a result, we would obtain a collection of a priori different bijections between $B$ and $\A(\Gamma)$.

\begin{remarks}\label{perfectcase} (1) We believe that the bijections mentioned above would be identical. In fact, this would clearly be the case if all the tensor factors of $B$ are perfect crystals, see Section \ref{subsection:KR-crystals}. Indeed, then the subgraph of $B$ consisting of the dual Demazure arrows is connected, so there is no more than one isomorphism between it and $\A(\Gamma)$.

(2) In the case when all the tensor factors of $B$ are perfect crystals, a corollary of the work in \cite{lalurc} would be the following application of the quantum alcove model, cf. Remark \ref{perfectcase} (1). By making specific choices for the $\lambda$-chains $\Gamma$ and $\Gamma'$, the bijection between $\A(\Gamma)$ and $\A(\Gamma')$ mentioned above would give a uniform realization of the combinatorial $R$-matrix (i.e., the unique affine crystal isomorphism commuting factors in a tensor product of KR crystals). In fact, we believe that this statement would hold in full generality, rather than just the perfect case.
\end{remarks}

\section{The quantum alcove model in types \texorpdfstring{$A$ and $C$}{A and C}}

In this section we specialize the quantum alcove model to types $A$ and $C$, and prove that the bijections
constructed in \citep{Lenart}, from the objects of the specialized quantum alcove
model to the tensor products of the corresponding KN columns (see Section \ref{subsection:KR-crystals}), are affine crystal isomorphisms.

\label{modelac}
\subsection{Type \texorpdfstring{$A$}{A}}
\label{subsection:TypeA}
We start with the basic facts about the root system of type $A_{n-1}$. 
We can identify the space $\hh_\R^*$ with the quotient $V:=\R^n/\R(1,\ldots,1)$,
where $\R(1,\ldots,1)$ denotes the subspace in $\R^n$ spanned 
by the vector $(1,\ldots,1)$.  
Let $\varepsilon_1,\ldots,\varepsilon_n\in V$ 
be the images of the coordinate vectors in $\R^n$.
The root system is 
$\Phi=\{\alpha_{ij}:=\varepsilon_i-\varepsilon_j \::\: i\ne j,\ 1\leq i,j\leq n\}$.
The simple roots are $\alpha_i=\alpha_{i,i+1}$, 
for $i=1,\ldots,n-1$. The highest root $\widetilde{\alpha}=\alpha_{1n}$. We let
$\alpha_0=\theta=\alpha_{n1}$.
The weight lattice is $\Lambda=\Z^n/\Z(1,\ldots,1)$. The fundamental weights are $\omega_i = \varepsilon_1+\ldots +\varepsilon_i$, 
for $i=1,\ldots,n-1$. 
A dominant weight $\lambda=\lambda_1\varepsilon_1+\ldots+\lambda_{n-1}\varepsilon_{n-1}$ is identified with the partition $(\lambda_{1}\geq \lambda_{2}\geq \ldots \geq \lambda_{n-1}\geq\lambda_n=0)$ having at most $n-1$ parts. Note that $\rho=(n-1,n-2,\ldots,0)$.
Considering the Young diagram of the dominant weight $\lambda$ as a concatenation of columns, 
whose heights are $\lambda_1',\lambda_2',\ldots$, 
corresponds to expressing $\lambda$ as $\omega_{\lambda_1'}+\omega_{\lambda_2'}+\ldots$ 
(as usual, $\lambda'$ is the conjugate partition to $\lambda$). 

The Weyl group $W$ is the symmetric group $S_n$, which acts on $V$ by permuting the coordinate vectors $\varepsilon_1,\ldots,\varepsilon_n$. Permutations $w\in S_n$ are written in one-line notation $w=w(1)\ldots w(n)$. For simplicity, we use the same notation $(i,j)$ with $1\le i<j\le n$ for the root $\alpha_{ij}$ and the reflection $s_{\alpha_{ij}}$, which is the transposition $t_{ij}$ of $i$ and $j$. We recall a criterion for the edges of the type $A$ quantum Bruhat graph. We need the circular order $\prec_i$ on $[n]$ starting at $i$, namely $i\prec_i i+1\prec_i\ldots \prec_i n\prec_i 1\prec_i\ldots\prec_i i-1$. It is convenient to think of this order in terms of the numbers $1,\ldots,n$ arranged on a circle clockwise. We make the convention that, whenever we write $a\prec b\prec c\prec\ldots$, we refer to the circular order $\prec=\prec_a$.
	  
	\begin{proposition}{\rm \cite{Lenart}}
		\label{prop:quantum_bruhat_order_type_A}
	For $1\leq i<j\leq n$, we have an edge $w \stackrel{(i,j)}{\longrightarrow} w(i,j)$ if and only if 
	there is no $k$ such that $i<k<j$ and $w(i) \prec w(k) \prec w(j)$.
	\end{proposition}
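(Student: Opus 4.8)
The plan is to reduce the statement to the standard combinatorial length formula for right multiplication by a transposition in $S_n$, and then to check that the single circular-order condition simultaneously encodes the Bruhat cover condition and the quantum edge condition, according to whether $w(i)<w(j)$ or $w(i)>w(j)$. Recall that for $i<j$ the transposition $(i,j)$ acts on the right by swapping the values in positions $i$ and $j$, and that the length changes by
\[
\ell(w(i,j)) - \ell(w) = \begin{cases} 1 + 2\,|\{k : i<k<j,\ w(i)<w(k)<w(j)\}| & \text{if } w(i)<w(j),\\ -1 - 2\,|\{k : i<k<j,\ w(j)<w(k)<w(i)\}| & \text{if } w(i)>w(j).\end{cases}
\]
First I would record this formula (by counting the inversions created or destroyed under the swap), noting that $w(i)<w(j)$ is exactly the regime where $\ell$ strictly increases, while $w(i)>w(j)$ is the regime where $\ell$ strictly decreases. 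This already shows the two edge types are mutually exclusive: a Bruhat cover forces $w(i)<w(j)$, and the quantum condition $\ell(w(i,j))=\ell(w)-2(j-i)+1\le \ell(w)-1$ forces $w(i)>w(j)$.

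In the case $w(i)<w(j)$, the edge $w\stackrel{(i,j)}{\longrightarrow}w(i,j)$ is an ordinary Bruhat edge precisely when it is a cover, i.e. $\ell(w(i,j))=\ell(w)+1$, which by the formula holds iff there is no $k$ with $i<k<j$ and $w(i)<w(k)<w(j)$. Since $w(i)<w(j)$, the clockwise arc of $\prec_{w(i)}$ from $w(i)$ to $w(j)$ consists exactly of the integers strictly between them in the usual order, so $w(i)\prec w(k)\prec w(j)$ is equivalent to $w(i)<w(k)<w(j)$; hence the cover condition coincides with the asserted circular-order condition here.

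In the case $w(i)>w(j)$, I would compute $\langle\rho,\alpha_{ij}^\vee\rangle=\rho_i-\rho_j=j-i$ using $\rho=(n-1,\ldots,0)$ (equivalently $\htroot(\alpha_{ij})=j-i$), so the quantum edge condition $\ell(w(i,j))=\ell(w)-2(j-i)+1$ becomes, via the length formula, $|\{k:i<k<j,\ w(j)<w(k)<w(i)\}|=j-i-1$. Since there are exactly $j-i-1$ indices $k$ with $i<k<j$, this says \emph{every} such $k$ satisfies $w(j)<w(k)<w(i)$. On the circular-order side, when $w(i)>w(j)$ the arc from $w(i)$ to $w(j)$ wraps past $n$ and $1$, so $w(i)\prec w(k)\prec w(j)$ means $w(k)>w(i)$ or $w(k)<w(j)$; negating ``there exists such a $k$'' therefore says every intermediate $k$ obeys $w(j)<w(k)<w(i)$, matching the quantum condition exactly.

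Combining the two cases, the disjunction of the Bruhat-cover and quantum-edge criteria is precisely the single circular-order statement, which proves the proposition. The only delicate point, and the step I would write out most carefully, is the bookkeeping for $\prec_{w(i)}$ when $w(i)>w(j)$, where the relevant arc wraps around through $n$ and $1$ and the universal quantifier from the length count must be matched against the correctly negated existential quantifier in the circular-order condition; the inversion count underlying the length formula itself is routine.
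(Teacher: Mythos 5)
Your proof is correct. The paper does not actually prove this proposition (it quotes it from \cite{Lenart}), but your argument --- the inversion-counting formula for $\ell(w(i,j))-\ell(w)$, the observation that covers force $w(i)<w(j)$ while the quantum condition $\ell(w(i,j))=\ell(w)-2\langle\rho,\alpha_{ij}^\vee\rangle+1$ with $\langle\rho,\alpha_{ij}^\vee\rangle=j-i$ forces $w(i)>w(j)$ and makes every intermediate value lie between $w(j)$ and $w(i)$, and the translation of both cases into the single circular-order condition for $\prec_{w(i)}$ --- is exactly the standard derivation underlying the cited result, with the wrap-around bookkeeping handled correctly.
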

	\begin{example}\label{qbgex}
	The quantum Bruhat graph of type $A_2$, i.e., on the symmetric group $S_3$, is indicated in Figure \ref{fig:qbg}.
	\begin{figure}[h]
        \centering
		\includegraphics[scale=0.5]{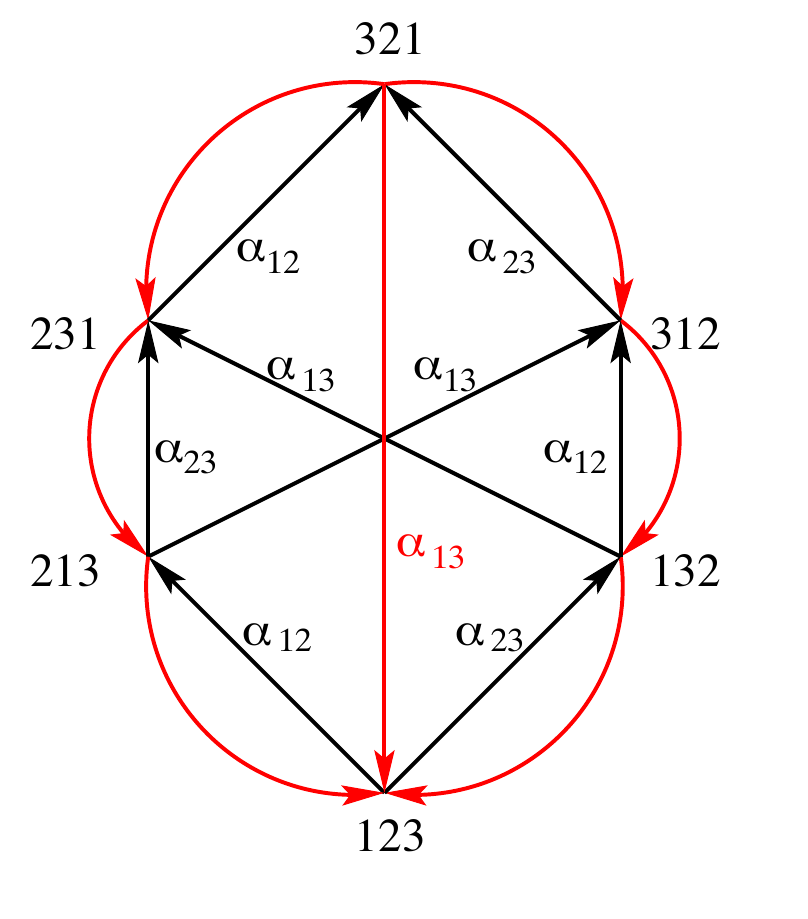}
		\caption{}
		\label{fig:qbg}
	\end{figure}
		\end{example}

We now consider the specialization of the quantum alcove model to type $A$. 
For any $k=1, \ldots , n-1$, we have the following $\omega_k$-chain, from $A_{\circ}$ to 
$A_{-\omega_k}$, denoted by 
	$\Gamma(k)$ \cite{lapawg}:
	\begin{equation}
		\begin{matrix*}[l]
			( (k,k+1),&    (k,k+2)&,     \ldots,&  (k,n),    \\
			\phantom{(} (k-1,k+1),&  (k-1,k+2)&,     \ldots,& (k-1,n),  \\
			\phantom{(,}\vdots & \phantom{,}\vdots & & \phantom{,}\vdots \\
			\phantom{(}(1,k+1),&   (1, k+2)&,      \ldots,&  (1,n)) \,.
		\end{matrix*}
		\label{eqn:lambdachainA}
	\end{equation}
	\begin{example}
		\label{example:broken_column_type_A}
		For $n=4, \Gamma(2)$ can be visualized as obtained from the following broken column, by pairing row numbers in the top and bottom parts in the prescribed order.
		\[
		\tableau{ 1 \\ 2 \\ \\ 3\\  4 }\quad, \quad   
		\Gamma(2)=\{ (2,3),(2,4), (1,3),(1,4) \}\,.
		\]
	Note that the top part of the above broken column corresponds to $\omega_2$.
	\end{example}
	Fix a dominant weight/partition $\lambda$ for the remainder of this section. We construct a $\lambda$-chain $\Gamma=\left( \beta_1, \beta_2, \dots , \beta_m \right)$ as the concatenation 
	$\Gamma:= \Gamma^{1}\dots\Gamma^{\lambda_1}$, where $\Gamma^{j}=\Gamma(\lambda'_j)$.
	Let  $J=\left\{ j_1 < \dots < j_s \right\}$ be a set of folding positions in 
	 $\Gamma$, not necessarily admissible, and let $T$ be the corresponding list of roots of $\Gamma$. 
	 The factorization of $\Gamma$ induces a 
	 factorization of $T$ as 
	 $T=T^{1}T^2 \dots T^{\lambda_1}$, and of $\Delta=\Gamma(J)$ as $\Delta=\Delta^1 \dots \Delta^{\lambda_1}$. 
	 Recalling that the roots in $\Delta$ were denoted $\gamma_k$, we use the notation $\gamma_k\in\Delta^q$ to indicate that the $k$th root in $\Delta$ falls in the segment $\Delta^q$ (rather than the fact that $\Delta^q$ contains a root equal to $\gamma_k$). 
	 We denote by $T^{1} \dots T^{j}$ the permutation obtained by composing the transpositions in $T^{1}, \dots , T^{j}$ left to right. 
For $w\in W$, written $w=w_1w_2\dots w_n$, let $w[i,j]=w_i\dots w_j$. 
	  
	  We now recall from \cite{Lenart} the construction of the correspondence between the type $A$ quantum alcove model and model based on diagram fillings. 
	  
	  \begin{definition}
		  \label{definition:fill}
	  Let $\pi_{j}=\pi_{j}(T):=T^1 \dots T^j$. We define the \emph{filling map},
	  which associates with each $J\subseteq[m]$ a filling of the Young diagram $\lambda$, by 
	  \begin{equation}
		  \label{eqn:filling_map}
		  \fill(J)=\fill(T):=C_{1}\dots C_{\lambda_1}\,,
		  \;\mbox{ where } C_{i}:=\pi_i[1,\lambda'_i].
	  \end{equation}
	    We define the \emph{sorted filling map} $\sfill(J)$  by sorting ascendingly
		  the columns of $\fill(J)$.
	  \end{definition}
	  
	  \begin{example}
		  \label{example:filling_map}
		  Let $n=3$ and $\lambda=(4,3,0)$, which is identified with $4\varepsilon_1 + 3\varepsilon_2 = 3\omega_2 +\omega_1$, and corresponds to the Young diagram $\, \tableau{ { } & { } & { } & { } \\ { } & { } & {}  }$. We have
		  \[\Gamma =\Gamma^1 \Gamma^2 \Gamma^3\Gamma^4 = \Gamma(2)\Gamma(2)\Gamma(2)\Gamma(1) = 
			  \{ \underline{(2,3)},\underline{(1,3)} \,|\, \underline{(2,3)}, (1,3) \,|\, \underline{(2,3)},(1,3)\,|\,\underline{(1,2)},(1,3) \}, \] where we underlined the roots 
			  in positions  
		  $J=\{1,2,3,5,7 \}$. 
		  Then 
		  \[T=
			  \{(2,3),(1,3)\,|\,(2,3)\,|\,(2,3)\,|\,(1,2) \},\,
			  \mbox{ and } 
			  \]
		  \begin{equation}
			  \label{eqn:delta_factorization}
			  \Gamma(J)=\Delta=\Delta^1\Delta^2\Delta^3\Delta^4=
			  \{\underline{(2,3)}, \underline{(1,2)} \, | \, 
			  \underline{(3,1)}, (2,3) \, | \, \underline{(1,3)}, (2,1)\,|\,
			  \underline{(2,3)},(3,1) \}, 
		  \end{equation}
		  where we again underlined the folding positions, and indicated the factorizations of $T$ and $\Delta$ by bars.
		  It is easy to check that $J$ is admissible; indeed, the sequence of permutations \eqref{eqn:admissible} corresponding to $J$ (written as broken columns) is a path in the quantum Bruaht graph, cf. Example \ref{qbgex}: 
		            \begin{equation}
				  \tableau{1 \\ \mathbf{2} \\ \\ \mathbf{3}}
				  \lessdot
				  \tableau{\mathbf{1} \\ 3 \\ \\ \mathbf{2}}
				  \lessdot
				  \tableau{2 \\ 3 \\ \\ 1}
				  \,|\,
				  \tableau{2 \\ \mathbf{3} \\ \\ \mathbf{1}}
				  \qstep
				  \tableau{2 \\ 1 \\ \\ 3}
				  \,|\,
				  \tableau{2 \\ \mathbf{1} \\ \\ \mathbf{3}}
				  \lessdot
				  \tableau{2 \\ 3 \\ \\ 1}
				  \,|\,
				  \tableau{\mathbf{2} \\ \\ \mathbf{3} \\ 1}
				  \lessdot
				  \tableau{3 \\ \\ 2 \\ 1}
				  \,|.
				  \label{eqn:admissible_chain}
			  \end{equation}
			  By considering the top part of the last column in each segment 
			  and by concatenating these columns left to right, we 
			   obtain $\fill(J)$, i.e.,
			 $ \fill(J) = \tableau{2 & 2 &2 & 3 \\ 3 & 1& 3 }\,$.
	  \end{example}

\begin{theorem}{\rm \cite{Lenart}[Theorem 4.1]}
	\label{theorem:bijection_type_A}	
	The map $\sfill$ is a bijection between $\A(\Gamma)$ and $\B$, see {\rm \eqref{btensl}}.
\end{theorem}

We now state the main result of this section.

\begin{theorem}
	\label{theorem:crystal_isomorphism}
	The map $\sfill$ is an affine crystal isomorphism between $\A(\Gamma)$ and the subgraph of $\B$ consisting of the dual Demazure arrows. 
	In other words, given $\sfill(J)=b$, there is a dual Demazure arrow $b\rightarrow f_p(b)$ if and only if $f_p(J)\ne \Bzero$, and we have  $f_i(b)=\sfill(f_i(J))$.
	%
\end{theorem}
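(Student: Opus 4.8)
The plan is to use the bijection $\sfill\colon\A(\lambda)\to\B$ from Theorem \ref{theorem:bijection_type_A} and verify that it intertwines the two descriptions of the crystal operators: the rule \eqref{eqn:rootF} governed by the graph $g_{\alpha_p}$ on the alcove-model side, and the signature rule (Definition \ref{definition:crystal_operators}) on the tensor-product side. Writing $b=\sfill(J)$, everything reduces to a single dictionary translating $g_{\alpha_p}$ into the $p$-signature of $b$. I would treat $1\le p\le n-1$ first, where $\alpha_p=\varepsilon_p-\varepsilon_{p+1}$ and the $p$-signature records the letter $p$ as $+$ and $p+1$ as $-$, and then adapt everything to the affine case $p=0$.

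First I would reduce the $p$-signature of $b$ to column contents. Because $\sfill$ sorts each column increasingly, a column containing both $p$ and $p+1$ reads them in the word as $-+$ and cancels; hence each column $C_c$ contributes to the reduced signature exactly one $+$ (if it contains $p$ but not $p+1$), one $-$ (if it contains $p+1$ but not $p$), or nothing. Since the word concatenates columns left to right, the reduced $p$-signature is the left-to-right sequence of these column contributions. I would then match this against $g_{\alpha_p}$ using the factorization $\Gamma=\Gamma^1\cdots\Gamma^{\lambda_1}$ with $\Gamma^c=\Gamma(\lambda'_c)$ and the columns $C_c=\pi_c[1,\lambda'_c]$ of $\fill(J)$ (whose contents agree with those of $\sfill(J)$). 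The key computation is that, inside segment $c$, the positions $i$ with $\gamma_i=\pm\alpha_p$ and their folding indicators $\varepsilon_i$ record precisely whether $p$ and $p+1$ land in the top $\lambda'_c$ rows of $\pi_c$, so that the net rise of $g_{\alpha_p}$ across segment $c$ equals the sign contributed by $C_c$. Consequently the running heights $\sgn(\alpha_p)\l{i_k}=g_{\alpha_p}(k-\tfrac12)$ compute the partial sums of the $p$-signature, giving $\varphi_p(b)=\max g_{\alpha_p}=M$ (and $\varepsilon_p(b)=M-\inner{\mu(J)}{\alpha_p^{\vee}}$, matching $g_{\alpha_p}(n+\tfrac12)$ via \eqref{eqn:graph_height}).

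With the dictionary in hand, the operators match cleanly. By Proposition \ref{prop:rootF}, when $M>\delta_{p,0}$ the operator $f_p$ trades the folded position $m$ (with $\gamma_m=\alpha_p$, height $M$) for its unfolded predecessor $k$ (with $\gamma_k=\alpha_p$, height $M-1$); under the dictionary these are exactly the heights bracketing the rightmost unbracketed $+$. Here I would invoke the mechanism already used in the proof of Theorem \ref{theorem:admissible}: passing from $J$ to $f_p(J)$ replaces the intermediate group elements $w_i$ by $s_pw_i$, and left multiplication by $s_p=s_{\alpha_p}$ swaps the values $p$ and $p+1$ in one-line notation. Since $k$ and $m$ are consecutive in $I_{\alpha_p}$, there is no $\pm\alpha_p$ position between them, so every strictly intermediate column contains both or neither of $\{p,p+1\}$ and is fixed by the swap-and-resort; thus $f_p$ changes exactly one column, replacing its $p$ by $p+1$, which is precisely $f_p(b)$. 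The case $e_p$ is symmetric via Proposition \ref{prop:rootE}. For $p\ne0$ every arrow is a dual Demazure arrow, and $f_p(J)\ne\Bzero\iff M>0\iff\varphi_p(b)\ge1\iff f_p(b)\ne\Bzero$, so the conditions match.

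The remaining, and most delicate, case is $p=0$, where $\alpha_0=\theta=\varepsilon_n-\varepsilon_1<0$ and the $0$-signature records $n$ as $+$ and $1$ as $-$. Here I would rerun the dictionary using the $\theta$-versions Propositions \ref{prop:main1_theta}--\ref{prop:main3_theta}, again obtaining $\varphi_0(b)=\max g_\theta=M$; the threshold $\delta_{0,0}=1$ in \eqref{eqn:rootF} then matches exactly the dual Demazure condition $\varphi_0(b)\ge2$ of Definition \ref{definition:pdemazure_arrows}, so that $f_0(J)\ne\Bzero\iff M\ge2\iff$ there is a dual Demazure arrow $b\to f_0(b)$. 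I expect the main obstacle to lie precisely in this affine case: the relevant folds are now quantum ($\qstep$) steps handled by Lemma \ref{lemma:theta} and Proposition \ref{prop:deodhar0}, folding at a $\theta$-position corresponds to the cyclic swap of $n$ and $1$ (which, after sorting, can move a column's content between the $+$ and $-$ registers), and one must carefully absorb the boundary term $l_\theta^{\infty}=\inner{\mu(J)}{\sgn(\theta)\theta^{\vee}}$ coming from $\gamma_\infty$. Establishing the segment-wise dictionary robustly enough to survive this cyclic reordering---so that the single letter changed by $f_0$ on $b$ still corresponds to the pair $(k,m)$ produced by \eqref{eqn:rootF}---is the crux of the argument.
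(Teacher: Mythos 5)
Your proposal follows essentially the same route as the paper: the dictionary between $g_{\alpha_p}$ and the $p$-signature via column contents is exactly the paper's use of the height counting lemma (Lemma \ref{lemma:height_counting}) together with Proposition \ref{proposition:max-correspondence} ($M=M'$, with the same $p=0$ corner cases), and your operator-matching step is the paper's argument via \eqref{newadmissibleb} (replacing $w_i$ by $s_pw_i$) combined with Proposition \ref{proposition:root_matching}. The only place you understate the work is the claim that ``no $\pm\alpha_p$ position between $k$ and $m$'' by itself forces intermediate columns to contain both or neither of $\{p,p+1\}$ --- this is precisely Proposition \ref{proposition:root_matching}, whose proof needs admissibility (Lemma \ref{lemma:admissible2}) and the maximality of $M'$, not just the consecutiveness of $k$ and $m$ in $I_{\alpha_p}$.
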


\begin{remarks}\label{nddA} (1) The affine crystal isomorphism in Theorem \ref{theorem:crystal_isomorphism} is unique, cf. Remark \ref{perfectcase} (1).  Therefore, this isomorphism gives the unique way to realize the non-dual Demazure arrows in  $\A(\Gamma)$, cf. Remark \ref{ndd} (1) and Example \ref{nddAex} below. 

(2) In \cite{Lenart} it was shown that the map $\sfill$ translates the height statistic to the charge statistic, which is known to express the energy function in the model based on diagram fillings, cf. Theorem \ref{theorem:charge}. This should be compared with Theorem \ref{mainconj} (2), where the constant $C$ is $0$ in this case.
\end{remarks}

\begin{example}\label{nddAex}
In type $A_2$, consider $\lambda=(3,2,0)$, the $\lambda$-chain in Example
	\ref{example:lambda_chain}, and  the admissible subset $J=\{1,2,3,5\}$, cf. Examples \ref{example:folded_lambda_chain}, \ref{example:graph}, and \ref{example:root_op}.
	We have 
	$b=\sfill(J)= \tableau{ 2 & 1 & 1 \\ 3 & 2}$, and 
	$f_0(b)=\sfill(\emptyset)= \tableau{1 & 1 & 1 \\ 2 & 2}$, cf. Example \ref{example:root0}.
	However, $b \to f_0(b)$ is not a dual Demazure arrow, and indeed $f_0(J)=\Bzero$, cf. Example \ref{example:root_op}. 
	In order to realize this arrow in the quantum alcove model, we would have to define $f_0(J)=\emptyset$. 
	This shows that, in general, the changes in an admissible subset corresponding to non-dual Demazure arrows are hard to control.
	Nevertheless, such arrows are sometimes still realized by our construction \eqref{eqn:rootF}, assuming that we drop the corresponding condition $M>1$. For an example, still in type $A_2$, consider $\lambda=(3,0,0)$, the $\lambda$-chain 
	$(\alpha_{12},\alpha_{13},\alpha_{12},\alpha_{13},\alpha_{12},\alpha_{13})$, and $J=\{3,4\}$. We have 
	$b=\sfill(J)= \tableau{1 & 3 & 3}$, and 
	$b\rightarrow f_0(b)= \tableau{1 & 3 & 1}$ is not a dual Demazure arrow. Now note that the corresponding arrow in the quantum alcove model is $J\mapsto J\cup\{5\}$, which is given by the mentioned relaxed version of \eqref{eqn:rootF}.
\end{example}

The main idea of the proof of Theorem \ref{theorem:crystal_isomorphism} is the following. 
The signature of a filling, used to define the crystal operator $f_p$, can 
be interpreted as a graph similar to the graph of $g_{\alpha_p}$, which is used to define the 
crystal operator on the corresponding admissible subsequence. The link between the 
two graphs is given by Lemma \ref{lemma:height_counting} below, 
called the height counting lemma, which we now explain. 

Let $N_c(\sigma)$ denote the number of entries $c$ in a filling $\sigma$. Let  
		$\ct(\sigma)=(N_1(\sigma), \dots, N_n(\sigma))$ be the content of $\sigma$, which is identified with a type $A_{n-1}$ weight.
Let $\sigma[q]$ be the filling consisting of the columns $1,2, \dots , q$ of $\sigma$.
Given a $\lambda$-chain and a corresponding sequence $J$ (not necessarily admissible), recall the related notation, including the heights $l_k^J$ in (\ref{deflev}), the sequence of roots $\Delta$, and its factorization illustrated in \eqref{eqn:delta_factorization}. 

\begin{lemma}{\rm \cite{LenartHHL}[Proposition 3.6]}
	\label{lemma:weight}
	Let $J \subseteq [m]$, and $\sigma=\sfill(J)$. Then we have 
	$\mu(J)=\ct(\sigma)$.
\end{lemma}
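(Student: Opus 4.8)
The plan is to establish a closed formula for $\mu(J)$ valid for \emph{arbitrary} $J$ (not just admissible ones, as the lemma requires), to rewrite the content of $\fill(J)$ in a parallel form, and to match the two by a single telescoping computation. First I would record the general weight formula. Writing $\widehat{r}_{j_i}=s_{\beta_{j_i},-l_{j_i}}$ and using $s_{\alpha,k}(\nu)=s_\alpha(\nu)+k\alpha$, I expand the product $\widehat{r}_{j_1}\cdots\widehat{r}_{j_s}(-\lambda)$ from the inside out; recognizing $\gamma_{j_i}=r_{j_1}\cdots r_{j_{i-1}}(\beta_{j_i})$ (the case $k=j_i$ of the definition of $\gamma_k$) yields
\[
\mu(J)=w(\lambda)+\sum_{i=1}^{s}l_{j_i}\,\gamma_{j_i},\qquad w:=r_{j_1}\cdots r_{j_s}=\pi_{\lambda_1}.
\]
This step uses no admissibility and holds in any type. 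On the content side, since $C_i$ consists of the entries $\pi_i(1),\ldots,\pi_i(\lambda_i')$, its contribution is $\pi_i(\omega_{\lambda_i'})$, so $\ct(\fill(J))=\sum_{i=1}^{\lambda_1}\pi_i(\omega_{\lambda_i'})$.

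Next I would reduce the claim to a combinatorial identity. Writing $\pi_{\lambda_1}=\pi_i\,u_i$ with $u_i:=T^{i+1}\cdots T^{\lambda_1}$, and using $w(\lambda)=\sum_i\pi_{\lambda_1}(\omega_{\lambda_i'})$ (because $\lambda=\sum_i\omega_{\lambda_i'}$ and $w$ is linear), the equality $\ct(\fill(J))=\mu(J)$ becomes
\[
\sum_{i=1}^{\lambda_1}\pi_i\bigl(\omega_{\lambda_i'}-u_i(\omega_{\lambda_i'})\bigr)=\sum_{i=1}^{s}l_{j_i}\,\gamma_{j_i}.
\]
I would prove this by telescoping each inner difference over the transpositions composing $u_i$: for a reflection $s_\beta$ one has $\nu-s_\beta(\nu)=\inner{\nu}{\beta^\vee}\beta$, so expanding $u_i$ as the ordered product of the transpositions $r_{j_t}$ with $c(t)>i$ (where $c(t)$ denotes the column containing the fold $j_t$), multiplying by $\pi_i$, and collecting the prefix products gives $\sum_{c(t)>i}\inner{\omega_{\lambda_i'}}{\beta_{j_t}^\vee}\gamma_{j_t}$. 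The crucial point is that $\pi_i$ times the prefix of $u_i$ preceding $r_{j_t}$ equals $r_{j_1}\cdots r_{j_{t-1}}$, so the conjugated root is exactly $\gamma_{j_t}$. Reindexing the double sum by $t$ turns the left-hand side into $\sum_{t=1}^{s}\bigl(\sum_{i<c(t)}\inner{\omega_{\lambda_i'}}{\beta_{j_t}^\vee}\bigr)\gamma_{j_t}$.

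It then remains to verify the scalar match $\sum_{i<c(t)}\inner{\omega_{\lambda_i'}}{\beta_{j_t}^\vee}=l_{j_t}$ for each fold, and this is where the explicit shape of the type-$A$ chains in \eqref{eqn:lambdachainA} enters; I expect this bookkeeping to be the main (albeit elementary) obstacle. Writing $\beta_{j_t}=\alpha_{ab}$ with $a<b$, each $\Gamma(k)$ contains $\alpha_{ab}$ exactly once, precisely when $a\le k<b$. Hence $l_{j_t}$, the number of earlier occurrences of $\beta_{j_t}$ in the chain, counts the columns $i<c(t)$ with $\lambda_i'<b$ (using that $\lambda_i'\ge\lambda_{c(t)}'\ge a$, since column heights weakly decrease). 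On the other hand $\inner{\omega_{\lambda_i'}}{\alpha_{ab}^\vee}=[a\le\lambda_i']-[b\le\lambda_i']$, which for $i<c(t)$ equals $1$ exactly when $\lambda_i'<b$ and $0$ otherwise. The two counts coincide, so the identity holds and the lemma follows. I would also note that this direct route is preferable to a crystal-theoretic argument (comparing how $\mu$ and $\ct\circ\sfill$ both change by $-\alpha_p$ under $f_p$, via Proposition \ref{prop:rootF}\eqref{rootFc}), since the latter only accesses admissible $J$ and the connected component of dual Demazure arrows, whereas the statement is for all $J\subseteq[m]$.
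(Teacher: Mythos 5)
Your proposal is correct, but there is nothing internal to compare it against: the paper does not prove this lemma at all, importing it verbatim as \cite{LenartHHL}, Proposition 3.6. So what you have done is reconstruct the missing proof, and the reconstruction checks out at every step. The exact weight formula $\mu(J)=w(\lambda)+\sum_{i=1}^{s}l_{j_i}\gamma_{j_i}$ with $w=r_{j_1}\cdots r_{j_s}$ follows, as you say, from $s_{\alpha,k}(\nu)=s_{\alpha}(\nu)+k\alpha$ together with $\gamma_{j_i}=r_{j_1}\cdots r_{j_{i-1}}(\beta_{j_i})$ (the largest folding position below $j_i$ being $j_{i-1}$), and uses no admissibility, exactly as the statement's arbitrary $J\subseteq[m]$ demands. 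The identification $\ct(\fill(J))=\sum_i\pi_i(\omega_{\lambda_i'})$ is immediate from Definition \ref{definition:fill}, and your prefix-product observation is sound: since every fold in a segment $\leq i$ has smaller position than every fold in a segment $>i$, the product of $\pi_i$ with the prefix of $u_i=T^{i+1}\cdots T^{\lambda_1}$ preceding $r_{j_t}$ is indeed $r_{j_1}\cdots r_{j_{t-1}}$, so the telescoped root is $\gamma_{j_t}$ on the nose. The final scalar bookkeeping is also right, including the two points that make it clean: each segment $\Gamma(k)$ of \eqref{eqn:lambdachainA} contains $\alpha_{ab}$ at most once (namely iff $a\leq k<b$), so $l_{j_t}$ counts whole earlier segments and receives no contribution from segment $c(t)$ itself; and weakly decreasing column heights give $\lambda_i'\geq\lambda_{c(t)}'\geq a$ for $i<c(t)$, collapsing $\inner{\omega_{\lambda_i'}}{\alpha_{ab}^{\vee}}=[a\leq\lambda_i']-[b\leq\lambda_i']$ to $[\lambda_i'<b]$. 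Two cosmetic remarks only: the equality $\mu(J)=\ct(\sigma)$ should be read in $\Lambda=\Z^n/\Z(1,\ldots,1)$, where both sides and the pairing with roots are well defined; and your closing point is well taken --- the alternative route via Proposition \ref{prop:rootF}\eqref{rootFc} (both sides drop by $\alpha_p$ under $f_p$) could at best reach admissible $J$ in one dual Demazure component, so your direct computation is the right scope for the lemma as stated. It is also worth noting that your weight formula and telescoping are type-independent; only the last counting step invokes the explicit type-$A$ chain, which is why the same skeleton adapts to the type-$C$ analogue, Lemma \ref{lemma:weightTypeC}.
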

\begin{corollary}
	\label{corollary:linfinity}
	Let $J \subseteq [m]$, $\sigma=\fill(J)$, and
	$\alpha \in \Phi$. 
	Then $\sgn(\alpha)l_{\alpha}^{\infty} = \inner{\ct(\sigma)}{\alpha^{\vee}}$.
\end{corollary}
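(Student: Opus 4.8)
The plan is to observe that this corollary is a direct consequence of Lemma \ref{lemma:weight} combined with the definition of $l_{\alpha}^{\infty}$, so essentially no new work is required beyond unwinding notation. First I would recall that, by definition, $l_{\alpha}^{\infty} := \inner{\mu(J)}{\sgn(\alpha)\alpha^{\vee}} = \sgn(\alpha)\inner{\mu(J)}{\alpha^{\vee}}$, using bilinearity of the scalar product in the first slot. Multiplying through by $\sgn(\alpha)$ and using $\sgn(\alpha)^2 = 1$, this immediately yields
\[
\sgn(\alpha)\,l_{\alpha}^{\infty} = \inner{\mu(J)}{\alpha^{\vee}}\,.
\]

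Next I would invoke Lemma \ref{lemma:weight}, which asserts that $\mu(J) = \ct(\sigma)$ for $\sigma = \fill(J)$. Substituting this identity into the right-hand side above gives
\[
\sgn(\alpha)\,l_{\alpha}^{\infty} = \inner{\ct(\sigma)}{\alpha^{\vee}}\,,
\]
which is exactly the claimed equality. This handles all $\alpha \in \Phi$ uniformly, since the sign manipulation does not depend on whether $\alpha$ is positive or negative.

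There is no genuine obstacle here: the entire combinatorial content — identifying the weight $\mu(J)$ of the folded path with the content of the associated filling — is already packaged in Lemma \ref{lemma:weight}, and the corollary merely transports that identity through the definition of $l_{\alpha}^{\infty}$. The only point requiring minor care is the bookkeeping of the sign factor $\sgn(\alpha)$, which appears inside the definition of $l_{\alpha}^{\infty}$ and must be matched against the explicit $\sgn(\alpha)$ prefactor in the statement; tracking it correctly confirms that the two signs cancel and leaves the clean pairing $\inner{\ct(\sigma)}{\alpha^{\vee}}$. One can also cross-check the result against the rightmost identity in \eqref{eqn:graph_height}, which already records $\inner{\mu(J)}{\alpha^{\vee}} = g_{\alpha}(n+\tfrac{1}{2})$, so the corollary is consistent with the graphical interpretation of $g_{\alpha}$.
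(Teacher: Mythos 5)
Your proof is correct and is exactly the argument the paper intends: the corollary is an immediate consequence of Lemma \ref{lemma:weight} ($\mu(J)=\ct(\sigma)$) combined with the definition $l_{\alpha}^{\infty}=\inner{\mu(J)}{\sgn(\alpha)\alpha^{\vee}}$, with the two sign factors cancelling (note only that the $\sgn(\alpha)$ sits in the second slot of the pairing, so it is linearity there that you use). The paper gives no separate proof precisely because this unwinding of notation is all that is needed.
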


The height counting lemma can be viewed as an extension of Corollary \ref{corollary:linfinity}. 

\begin{lemma}{\rm \cite{LenartHHL}[Proposition 4.1]}
	\label{lemma:height_counting}
Let $J \subseteq [m]$, and $\sigma=\fill(J)$.
For a fixed $k$, let $\gamma_k=(c,d)$ be a root in $\Delta^{q+1}$. 
We have 
	\[
	\sgn(\gamma_k)\,\l{k} = \langle \ct(\sigma[q]),\gamma_k^{\vee}\rangle
		= N_c(\sigma[q]) - N_d(\sigma[q]). 
	\] 
\end{lemma}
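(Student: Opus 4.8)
The plan is to reduce the first equality to the weight formula of Lemma~\ref{lemma:weight}, applied to a truncated $\lambda$-chain; the second equality is immediate, since in the type $A$ conventions $\gamma_k^\vee=\varepsilon_c-\varepsilon_d$ and $\ct(\sigma[q])=(N_1(\sigma[q]),\dots,N_n(\sigma[q]))$, so $\inner{\ct(\sigma[q])}{\gamma_k^\vee}=N_c(\sigma[q])-N_d(\sigma[q])$. For the first equality, set $\nu:=\omega_{\lambda'_1}+\dots+\omega_{\lambda'_q}$ and let $\Gamma^{[q]}:=\Gamma^1\cdots\Gamma^q$ be the initial part of $\Gamma$, which is a $\nu$-chain. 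Let $J^{[q]}:=J\cap[m_q]$ be the folding positions lying in the first $q$ segments, where $m_q$ is the length of $\Gamma^{[q]}$. Since each column $C_i=\pi_i[1,\lambda_i']$ (Definition~\ref{definition:fill}) depends only on the foldings in segments $\le i$, applying $\fill$ to $\Gamma^{[q]}$ with folding set $J^{[q]}$ reproduces exactly the columns $C_1\cdots C_q=\sigma[q]$. Hence Lemma~\ref{lemma:weight} gives $\ct(\sigma[q])=\mu(J^{[q]})$, and it remains to identify $\inner{\mu(J^{[q]})}{\gamma_k^\vee}$ with $\sgn(\gamma_k)\l{k}$.

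Next I would extract $\sgn(\gamma_k)\l{k}$ from its definition \eqref{deflev}. Let $j_1<\dots<j_p$ be the folding positions less than $k$, put $g:=\widehat{r}_{j_1}\cdots\widehat{r}_{j_p}=t_\eta w$ with linear part $w=r_{j_1}\cdots r_{j_p}$ and translation $\eta=g(0)$, so that $w(\beta_k)=\gamma_k$. Transforming $H_{\beta_k,-l_k}$ by $g$ produces $H_{\gamma_k,\,-l_k+\inner{\eta}{\gamma_k^\vee}}$; comparing with \eqref{deflev} via $H_{\gamma_k,c}=H_{|\gamma_k|,\sgn(\gamma_k)c}$ yields
\[
\sgn(\gamma_k)\,\l{k}=l_k-\inner{\eta}{\gamma_k^\vee}.
\]
This is the same manipulation already used in the proof of Proposition~\ref{prop:rootF}\eqref{rootFc}, via $s_{\alpha,c}(\nu)=s_\alpha(\nu)+c\alpha$.

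The crux is two facts read off from the explicit chain \eqref{eqn:lambdachainA}. First, for the positive root $\beta_k=(a,b)$ one has $l_k=\inner{\nu}{\beta_k^\vee}$: indeed $\beta_k$ occurs exactly once in each segment $\Gamma^i=\Gamma(\lambda_i')$ with $a\le\lambda_i'<b$, it does not recur within segment $q+1$ below position $k$ (the roots of a single $\omega_r$-chain are distinct), and $\inner{\omega_r}{\beta_k^\vee}$ equals $1$ when $a\le r<b$ and $0$ otherwise. Second, every folding position $s$ of $J$ lying in segment $q+1$ and below $k$ satisfies $\l{s}=\inner{\nu}{\beta_s^\vee}$ by the same count, so $\inner{-\nu}{\beta_s^\vee}=-l_s$ and $-\nu$ lies on the hyperplane $H_{\beta_s,-l_s}$ fixed by $\widehat{r}_s$. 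Writing $g=A\cdot B$, where $A$ gathers the foldings in segments $\le q$ and $B$ those in segment $q+1$ below $k$, it follows that $B$ fixes $-\nu$, whence $g(-\nu)=A(-\nu)=-\mu(J^{[q]})$ by the definition of the weight of $J^{[q]}$ in the $\nu$-chain $\Gamma^{[q]}$.

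Finally I would combine these. From $g=t_\eta w$ we get $g(-\nu)=-w(\nu)+\eta$, so $\mu(J^{[q]})=w(\nu)-\eta$; pairing with $\gamma_k^\vee$ and using $\inner{w(\nu)}{\gamma_k^\vee}=\inner{w(\nu)}{w(\beta_k)^\vee}=\inner{\nu}{\beta_k^\vee}=l_k$ yields $\inner{\mu(J^{[q]})}{\gamma_k^\vee}=l_k-\inner{\eta}{\gamma_k^\vee}=\sgn(\gamma_k)\l{k}$, as required. The main obstacle is the second fact above: the level $\l{k}$ genuinely depends on \emph{all} foldings below $k$, including those inside segment $q+1$, whereas the right-hand side only involves the columns up to $q$. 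Recognizing that the segment-$(q+1)$ foldings fix $-\nu$ is exactly what reconciles this discrepancy and makes the truncation legitimate.
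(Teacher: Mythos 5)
Your proof is correct; note that the paper itself contains no argument for this lemma to compare against --- it is imported verbatim from \cite{LenartHHL}, Proposition 4.1 (just as Lemma~\ref{lemma:weight} is imported from Proposition 3.6 there), so your write-up supplies a self-contained derivation, modulo Lemma~\ref{lemma:weight}, which is a legitimate input since it precedes the height-counting statement in the cited source. The individual steps all check: the truncated chain $\Gamma^1\cdots\Gamma^q$ is exactly the $\nu$-chain for $\nu=\omega_{\lambda'_1}+\cdots+\omega_{\lambda'_q}$; the affine reflections $\widehat{r}_j$ attached to positions $j$ in the first $q$ segments are the same whether computed in $\Gamma$ or in the truncated chain; and $C_i=\pi_i[1,\lambda'_i]$ depends only on foldings in segments $\leq i$, so Lemma~\ref{lemma:weight} indeed yields $\ct(\sigma[q])=\mu(J^{[q]})$. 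The level extraction $\sgn(\gamma_k)\l{k}=l_k-\inner{\eta}{\gamma_k^\vee}$ from \eqref{deflev} is the same manipulation as in the proof of part \eqref{rootFc} of Proposition~\ref{prop:rootF}, and your two counting facts hold because in type $A$ a positive root $(a,b)$ occurs in $\Gamma(r)$ exactly $\inner{\omega_r}{\alpha_{ab}^\vee}\in\{0,1\}$ times, so $\beta_k$, and likewise each $\beta_s$ for a folding position $s<k$ in segment $q+1$, has all of its earlier occurrences in segments $\leq q$, giving $l_k=\inner{\nu}{\beta_k^\vee}$ and $l_s=\inner{\nu}{\beta_s^\vee}$. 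You correctly identified the crux: the level $\l{k}$ a priori depends on the foldings inside segment $q+1$ below $k$, and the observation that each such $\widehat{r}_s$ fixes $-\nu$ (since $-\nu\in H_{\beta_s,-l_s}$), hence $B(-\nu)=-\nu$ in your factorization $g=AB$, is precisely what makes the truncation legitimate; the finish via $g=t_\eta w$ and $W$-invariance of the pairing is clean. Two small remarks: in your second fact you wrote $\l{s}$ where the unfolded level $l_s$ is meant (the rest of that sentence uses $l_s$ correctly), and you should note explicitly that Lemma~\ref{lemma:weight} is being invoked with $\lambda$ replaced by the dominant weight $\nu$ --- valid because the statement and its source are uniform in the dominant weight, exactly the point the paper flags after Lemma~\ref{lemma:weightTypeC} for the type-$C$ analogue.
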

	We now introduce notation to be used for the remainder of this section.
	Let $p \in \{1, \dots, n-1 \}$. 
	Let $J$ be an admissible sequence and let $\sigma=\sfill(J)=C_1\ldots C_{\lambda_1}$.
	For $i=1,\ldots,\lambda_1$, let $a_i:=\inner{\ct(C_i)}{\alpha_p^{\vee}}$,  
	and note that $a_i\in \left\{1,-1,0\right\}$; here $a_i =1$ (resp. $a_i=-1$) 
	corresponds to $C_i$ containing $p$ but not $p+1$ (resp. $p+1$ but not $p$), while 
	$a_i = 0$ corresponds to $C_i$ 
	containing both $p$ and $p+1$, or neither of them. 
	
	The sequence $a_i$ corresponds to the $p$-signature from Section 
	\ref{subsection:KR-crystals}, as we now explain. 
	For $j=0,\ldots,\lambda_1$, let $h_j:= \inner{\ct(\sigma[j])}{\alpha_p^{\vee}}=\sum_{i=0}^j a_i$, 
	where we set 
	$a_0=h_0:=0$. It is useful to think of this sequence as a piecewise linear function, by analogy with the function $g_{\alpha_p}$ used to define the crystal operator $f_p$ in the quantum alcove model.	Let $M'$ be the maximum of $h_j$, 
	and let $m'$ be minimal with the property 
	$h_{m'}=M'$.
	We clearly have $M' \geq 0$. If $M'>0$   
	then $a_{m'}=1$, and $m'$ is the position of the rightmost $+$ in the $p$-signature of $\sigma$. 
	It follows that $f_p$ changes the $p$ in column $m'$ of $\sigma$ to $p+1$.
	%
	%
	%
	%
	The previous observations hold if we replace 
	$\alpha_p$, $f_p$, and the $p$-signature with $\alpha_0$, $f_0$, and the $0$-signature, while at the same time we replace the entries $p$ and $p+1$ in a filling with $n$ and $1$,  respectively. Therefore, from now on we assume that the index $p$ is in $\{ 0,1, \ldots, n-1\}$.

	\begin{example}
		We continue with Example \ref{example:filling_map}. 
		Let $\sigma=\sfill(J)=\tableau{ 2 & 1 & 2 & 3 \\ 3 & 2 & 3}$, then 
		$f_2(\sigma)=\tableau{2 & 1 & 2 & 3 \\ 3 & 3 & 3}$.
		Let $p=2$ and refer to
		Figure \ref{fig:exampleA}.
		\begin{figure}[h]
            \centering
            \includegraphics[scale=.5]{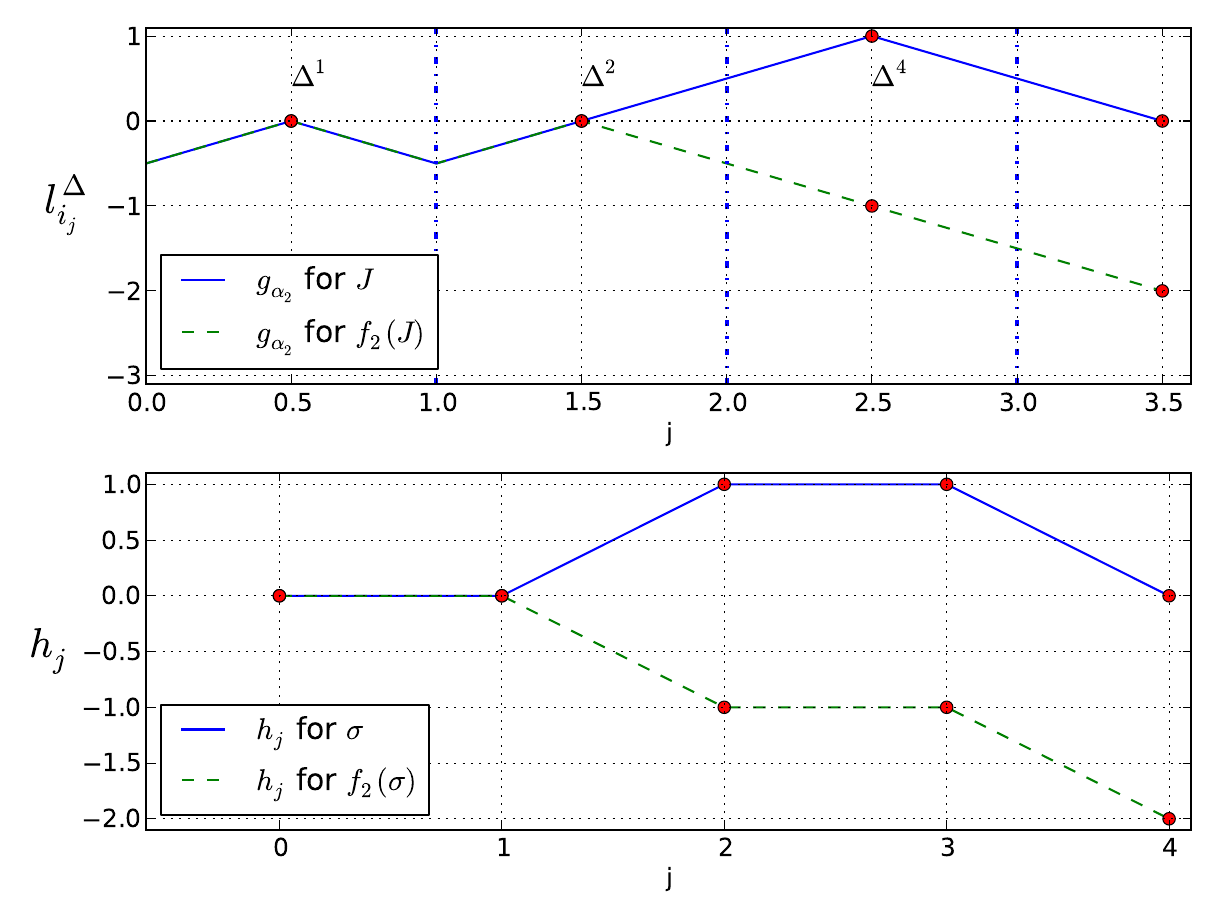}
			\caption{}
			\label{fig:exampleA}
		\end{figure}
		From the graph $g_{\alpha_2}$ for $J$, we can see that $M=1$. We 
		note that $m=7$, with $\gamma_7 \in \Delta^4$, and $k=4$ with 
		$\gamma_k \in \Delta^{2}$. 
		So $f_2(J)=(J \backslash \{ 7 \}) \cup \{ 4 \} = \{1,2,3,4,5 \}$,
		and 
		\[\Gamma(f_2(J))=
		  \{ \underline{(2,3)},\underline{(1,2)} \,|\, \underline{(3,1)}, 
		  \underline{(2,3)} \,|\, \underline{(1,2)},(3,1)\,|\,(3,2),(3,1 \}, \]
		  where we underlined roots in positions $f_2(J)$.
	From the graph corresponding to $h_j$ for $J$, we can see that $M'=1$ and $m'=2$. 

	\end{example}
	\begin{lemma}
		\label{lemma:chain_filling}
		If $\alpha_p=\gamma_k  \in \Delta ^q$ with $k \not \in J$ then $a_q=1$.
	\end{lemma}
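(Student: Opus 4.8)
The plan is to reduce the statement to a direct analysis of the permutations produced by the filling map, and then to rule out every move that could spoil the conclusion using the quantum Bruhat graph criterion of Proposition \ref{prop:quantum_bruhat_order_type_A}.

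First I would pass from the folded root $\gamma_k$ to the permutation carried by the alcove path at step $k$. Writing $v:=r_{j_1}\cdots r_{j_t}$ for the product of all folds preceding $k$ (so that $v$ is $\pi_{q-1}$ followed by the folds of $\Delta^q$ lying before $k$), we have $\gamma_k=v(\beta_k)$. Since $\beta_k$ belongs to the $\omega_h$-chain $\Gamma(\lambda'_q)$ with $h:=\lambda'_q$, it has the form $\beta_k=(a,b)$ with $a\le h<b$; hence the hypothesis $\gamma_k=\alpha_p=\varepsilon_p-\varepsilon_{p+1}$ forces $v(a)=p$ and $v(b)=p+1$. Thus at step $k$ the value $p$ sits in a top position ($a\le h$) and $p+1$ in a bottom position ($b>h$). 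Because $k\notin J$ we do not fold at $k$, so these two values are still at positions $a$ and $b$ immediately after $k$. As $C_q=\pi_q[1,h]$ depends only on the block-$q$ part of the path, it remains to show that $p$ stays in a top position and $p+1$ in a bottom position through all remaining folds of $\Delta^q$; this gives $a_q=N_p(C_q)-N_{p+1}(C_q)=1$.

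The key step is to show that no later fold of $\Delta^q$ can move $p$ out of position $a$ or $p+1$ out of position $b$. By the ordering of $\Gamma(h)$ in \eqref{eqn:lambdachainA}, every root after $(a,b)$ lies in a row $a'\le a$ (the rest of row $a$ being $(a,b')$ with $b'>b$, followed by rows $a-1,\dots,1$). A fold touching position $a$ must occur at some $(a,b')$ with $b'>b$, and a fold touching position $b$ must occur at some $(a',b)$ with $a'<a$; I claim both are forbidden by admissibility as long as the current permutation $w$ satisfies $w(a)=p$, $w(b)=p+1$. For a fold at $(a,b')$ with $b'>b$, taking the intermediate index $c=b$ we have $a<b<b'$, $w(c)=p+1$, and, in the circular order $\prec_p$, $p\prec_p p+1\prec_p w(b')$ (since $w(b')\notin\{p,p+1\}$), so by Proposition \ref{prop:quantum_bruhat_order_type_A} this is not an edge of the quantum Bruhat graph. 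For a fold at $(a',b)$ with $a'<a$, taking $c=a$ we have $a'<a<b$, $w(c)=p$, and $w(a')\prec_{w(a')}p\prec_{w(a')}p+1=w(b)$ (using $w(a')\ne p+1$), again not an edge. An induction on the steps of $\Delta^q$ after $k$ then shows that every admissible fold either is one of these forbidden moves, hence absent, or else acts on positions different from $a$ and $b$; in all cases $p$ and $p+1$ remain at $a$ and $b$. Therefore $\pi_q(a)=p$ with $a\le h$ and $\pi_q(b)=p+1$ with $b>h$, which is exactly $a_q=1$.

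The main obstacle is precisely the crux just described: one must check that the two disruptive families of folds are genuinely excluded by the quantum Bruhat edge criterion. The argument hinges on $p$ and $p+1$ being consecutive values, so that $p+1$ is the immediate successor of $p$ in the circular order and can serve as the obstructing index $c$; it is here that the hypothesis $\gamma_k=\alpha_p$ (a simple root, rather than a longer root) is used essentially. The same computation handles $p=0$ after replacing the pair $(p,p+1)$ by $(n,1)$, which is again consecutive in every relevant circular order $\prec_i$; no other part of the argument changes.
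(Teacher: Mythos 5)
Your proof is correct and follows essentially the same route as the paper's: both reduce the lemma to the observation that once $p$ and $p+1$ occupy positions $a \le \lambda'_q < b$, the only roots of $\Gamma^q$ after $\beta_k=(a,b)$ that could disturb these positions are $(a,b')$ with $b'>b$ or $(a',b)$ with $a'<a$, and neither can label a quantum Bruhat edge out of a permutation carrying $p$ at $a$ and $p+1$ at $b$, so $\pi_q$ still has $p$ in the top part and $p+1$ in the bottom part, giving $a_q=1$. The only (cosmetic) difference is that you verify non-edgeness via the circular-order criterion of Proposition \ref{prop:quantum_bruhat_order_type_A}, which treats $p\ne 0$ and $p=0$ uniformly, whereas the paper argues directly with lengths (``goes up by more than 1'' for $p\ne 0$, ``does not go down maximally'' for $p=0$).
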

		\begin{proof}
			Recall that 
			$\alpha_p=\gamma_k=w(\beta_k)$ for the corresponding $w$ defined in \eqref{defw}, and let $\beta_k = (b,c)$.
			The result follows from the claim that 
			$w(b)=\pi_q(b)$ and $w(c)=\pi_q(c)$ 
			(cf. Definition \ref{definition:fill}),
			which is a consequence of the structure of $\Gamma^q$
			(cf. \eqref{eqn:lambdachainA}), as we now explain.
			The only reflections in $\Gamma^q$ to the right of $\beta_k$ 
			that affect values in positions $b$ or $c$ are
			$(b,c')$ for $c'>c$, and $(b',c)$ for $b'<b$. Applying
			these reflections (on the right) to any $w'$ satisfying 
			$w'(b)=w(b)$ and $w'(c)=w(c)$ 
			 does not give an edge 
			in the quantum Bruhat graph, by the corresponding criterion in Proposition \ref{prop:quantum_bruhat_order_type_A}.
		\end{proof}

	Fix an admissible subset $J$, and recall Proposition $\ref{prop:rootF}$, including the notation therein. In particular, $M$ is the maximum
	of $g_{\alpha_p}$. Moreover, if $M> \delta_{p,0}$ we defined $k\not\in J$ and $m\in J\cup\{\infty\}$ with $\gamma_k=\alpha_p$, $\sgn(\alpha_p)\l{k}=M-1$, 
	and $\gamma_m=\alpha_p$ when  $m \ne \infty$. 
	We will implicitly use the following observation when applying 
	Lemma \ref{lemma:admissible2} in the next two proofs: 
	if $a_i \ne  0$ then $\sgn(a_i) = \sgn(\pi_i^{-1}(\alpha_p))$, where $\pi_i$ is given in Definition \ref{definition:fill}.

		\begin{proposition}
			\label{proposition:max-correspondence}
			We have $M\geq M'$. If $M \geq \delta_{p,0}$ then $M=M'$.
		\end{proposition}
		\begin{proof}
			We first prove that $M\geq M'$. By Corollary \ref{corollary:linfinity} we have 
			$h_{\lambda_1}=\sgn(\alpha)l_{\alpha_p}^{\infty}$, so the case $M'=h_{\lambda_1}$ is clear.  
			The case $M'=0$ is trivial, since $M\geq 0$. Therefore, we can assume that the maximum of the sequence $h_i$ does not occur at its endpoints $i=0$ and $i=\lambda_1$. 
			Then we can find 
			$i<j$ such that $h_i=M'$, $a_i>0$, $a_j<0$, 
			and $a_t=0$ for $t \in (i,j)$. 
			By 
			Lemma \ref{lemma:admissible2} 
			there exists $\gamma_{k'} = \alpha_p \in \Delta^{q}$ with 
			$q \in (i,j]$, and by Lemma \ref{lemma:height_counting} we have $\sgn(\alpha_p)\l{k'}=h_{q-1}=h_i=M'$.
				Hence $M\geq M'$.

			By \eqref{eqn:graph_height}, 
			Lemma \ref{lemma:height_counting},  and 
			Corollary \ref{corollary:linfinity}, all the values of $g_{\alpha_p}$ at points in ${\mathbb Z}+\frac{1}{2}$ are among the values $h_i$. If $M\geq \delta_{p,0}$ then,  
			by Propositions \ref{prop:main1} and \ref{prop:main1_theta}, the maximum $M$ is attained at a point in ${\mathbb Z}+\frac{1}{2}$. It follows that $M \leq M'$, 
			hence $M=M'$.
			\end{proof}

			The previous proposition states that $M=M'$ 
			except in a few corner cases that occur when $p=0$. 
			We will sometimes use one symbol in favor of the other 
			to allude to the corresponding graph.

		\begin{proposition}
			\label{proposition:root_matching}
			Assume that $M>\delta_{p,0}$, so $M=M'$ (by Proposition {\rm \ref{proposition:max-correspondence}}) and $f_p(J)\ne \Bzero$. 
			Then $\gamma_k\in \Delta^{m'}$. If $m \ne \infty$, 
			so $\gamma_m \in \Delta^{m''}$ for some $m''\ge m'$, then $a_i=0$ for $i \in (m',m'')$; if $m= \infty$ then  $a_i=0$ for $i>m'$.
		\end{proposition}

		\begin{proof}
			Assuming  $\gamma_k \in \Delta^j$, 
			by Proposition \ref{prop:rootF} (2), Lemma \ref{lemma:chain_filling}, and Lemma \ref{lemma:height_counting}, we have $a_j=1$ and $\sgn(\alpha_p)\l{k}=M-1=h_{j-1}$. It follows that  
			$h_j=M=M'$. By the definition of $m'$ and the fact that $M'>0$, we have $1\le m'\le j$  and $a_{m'}=1$. 
			By way of contradiction suppose that $m'<j$. 
			It follows that the set
			$\left\{ i \in (m',j] \,|\, a_i \ne 0 \right\}$ is not empty, so let $t$ be its minimum. 
			We have $a_t=-1$ and $t<j$, because $a_t=1$ would imply $h_t>h_{m'}=M'$. 
			We can now apply 
			Lemma \ref{lemma:admissible2} to show that there exists $\gamma_{k'}=\alpha_p\in \Delta^q$ with $q\in(m',t]$ and $k'\in J$. 
			By Lemma \ref{lemma:height_counting}, we have $\sgn(\alpha_p)\l{k'}=h_{q-1}=h_{m'}=M'=M$. Thus, since $k'<k<m$, the
			 minimality of $m$ is contradicted.
			We conclude that $j=m'$.
			If $m\ne \infty$, we can use a similar proof to conclude that
			the set 
			$\left\{i \in (m',m'') \,|\, a_i \ne 0 \right\}$ is empty.
			The case $m=\infty$ is done similarly.
		\end{proof}

	\begin{proof}[Proof of Theorem {\rm \ref{theorem:crystal_isomorphism}}]
	We continue to use the notation from the above setup. Recall that $b=\sfill(J)$.
	The statement that there is a dual Demazure arrow $b \to f_p(b) $ if and only if 
	$f_p(J)\ne \Bzero$ follows from 
    Proposition \ref{proposition:max-correspondence}; indeed, it is clear that $\varphi_p(b)=M'$, whereas $\varphi_p(J)=M-\delta_{p,0}$ in the quantum alcove model if $M\ge\delta_{p,0}$, by Theorem \ref{theorem:admissible} (2).

	We next show that $f_p(b) = \sfill(f_p(J))$, when $f_p(J)\ne \Bzero$. 
	Since $f_p(b) \ne \Bzero$, we have $M'>0$, and $f_p$ changes the 
	$p$ in column $m'$ to  $p+1$ ($f_0$ changes $n$ to $1$ and sorts the column).
	Now let us turn to $f_p(J)$, where we write the admissible subset $J$ as $\{j_1 < \dots < j_s \}$. Let  
	$w_i:=r_{j_1} \dots r_{j_i}$  be the corresponding sequence of permutations. 
       (Recall that the filling $\fill(J)$  is constructed from 
	a subsequence of $w_i$, see Definition \ref{definition:fill}.) 
	We assume  
	$m\ne \infty$, as the case $m=\infty$ is proved similarly.
	There exist $a<b$ such that 
	\[ j_a < k < j_{a+1}  < \cdots < j_b = m < j_{b+1}\,; \]
	if $a=0$ or $b=s$, then the corresponding indices $j_a$, 
	respectively $j_{b+1}$ are missing.
	The sequence of permutations associated to $f_p(J)$ is 
	\[1,\,w_1,\, \ldots,\, w_a,\, s_pw_a,\, s_pw_{a+1},\, \ldots ,\, s_pw_{b-1}=w_b,\, w_{b+1},\, \ldots,\, w_s\]
	(see \eqref{newadmissibleb}).
	By the first part of Proposition \ref{proposition:root_matching} and by using the notation therein, we conclude that 
	$\sfill(f_p(J))$ is obtained from $\sfill(J)$ by interchanging $p$ and $p+1$
	in columns $i$ for $i \in [m',m'')$ (interchange $n$ with $1$ if $p=0$).
	By the second part of Proposition \ref{proposition:root_matching}, this amounts to changing
	the $p$ in column $m'$ to $p+1$ ($n$ to $1$ if $p=0$).
	\end{proof}

\subsection{Type \texorpdfstring{$C$}{C}}
We start with the basic facts about the root system of type $C_n$. We can 
identify the space $\hh^*_{\R}$ with $V:= \R^n$, the coordinate vectors being 
$\varepsilon_1, \dots , \varepsilon_n$. The root system is 
$\Phi = \left\{ \pm \varepsilon_i \pm \varepsilon_j \, :\,  1 \leq i < j \leq n \right\}
\cup
\left\{  \pm 2 \varepsilon_i \, : \, 1 \leq i \leq n \right\}$.
The simple roots are $\alpha_i = \varepsilon_i - \varepsilon_{i+1}$, for 
$i= 1, \dots , n-1, $ and $\alpha_n=2\varepsilon_n$. The highest root 
$\widetilde{\alpha}=2\varepsilon_1$. We let $\alpha_0=\theta=-2\varepsilon_1$. The weight lattice is $\Lambda = \Z^n$.
The fundamental weights are $\omega_i = \varepsilon_1 + \dots + \varepsilon_i$, for 
$i=1, \dots , n$. A dominant weight $\lambda = \lambda_1\varepsilon_1 + \dots + \lambda_n\varepsilon_n$ is
identified with the partition $(\lambda_1 \geq \lambda_2 \geq \dots \geq \lambda_{n-1}\geq \lambda_n \geq 0)$
of length at most $n$. Note that $\rho = (n, n-1, \dots , 1)$.
Like in type $A$, 
writing the dominant weight $\lambda$ as a sum of fundamental weights
corresponds to considering the Young diagram of $\lambda$ as a concatenation of columns.
We fix a dominant weight $\lambda$ throughout this section.

The Weyl group $W$ is the group of signed permutations $B_n$, which acts on $V$ by permuting
the coordinates and changing their signs. A signed permutation is a bijection $w$ from 
$[\overline{n} ]:= 
\{ 1 < 2 < \dots < n < \overline{n} < \overline{n-1} < \dots < \overline{1} \}$ to 
$[\overline{n}]$ satisfying $w(\overline{\imath}) = \overline{w(i)}$.
Here $\overline{\imath}$ is viewed as $-i$, so $\overline{\overline{\imath}}= i$,
$|\overline{\imath}| = i$, and $\sign(\overline{\imath})=-1$.
We use both the window notation $w=w_1 \dots w_n$ and the full one-line notation
$w=w(1)\dots w(n)w(\overline{n})\dots w(\overline{1})$ for signed permutations.
For simplicity, given $1 \leq i < j \leq n$, we denote by $(i,j)$ the root 
$\varepsilon_i - \varepsilon_j$ and the corresponding reflection, which is identified
with the composition of transpositions $t_{ij}t_{\overline{\jmath \imath}}$.
Similarly, we denote by $(i,\overline{\jmath})=(j,\overline{\imath})$, for $1\leq i<j \leq n$, the root 
$\varepsilon_i + \varepsilon_j$ and the corresponding reflection, which is 
identified with the composition of transpositions 
$t_{i\overline{\jmath}} t_{j\overline{\imath}}$.
Finally, we denote by $(i,\overline{\imath})$ the root $2\varepsilon_i$ and the 
corresponding reflection, which is identified with the transposition $t_{i\overline{\imath}}$.

We recall a criterion for the edges of the type $C$ quantum Bruhat graph. We need the circular order $\prec_i$ on $[\overline{n}]$ starting at $i$, which is 
defined in the obvious way, cf. Section \ref{subsection:TypeA}. It is convenient 
to think of this order in terms of the numbers 
$1, \dots, n, \overline{n}, \dots, \overline{1}$ arranged on a circle clockwise.
We make the same convention as in Section \ref{subsection:TypeA} that, whenever we write 
$a \prec b \prec c \prec \dots,$  we refer to the circular order $\prec = \prec_a$.

\begin{proposition}{\rm \cite{Lenart}} 
  ~
	\begin{enumerate}
		\item[{\rm (1)}] 
			 Given $1 \leq i < j \leq n$, we have an edge
			$w \stackrel{(i,j)}{\longrightarrow} w(i,j)$ if and only if
			there is no $k$ such that 
			$i < k < j$ and $w(i) \prec w(k) \prec w(j)$.
		\item[{\rm (2)}] 
			 Given $1 \leq i < j \leq n$, we have an edge 
			$w\stackrel{(i,\overline{\jmath})}{\longrightarrow}$ if and
			only if $w(i) < w(\overline{\jmath})$,  
			$\sign(w(i)) = \sign(w(\overline{\jmath}))$, 
			and there is no $k$ such that $i<k<\overline{\jmath}$ and 
			$w(i) < w(k) < w(\overline{\jmath})$.
		\item[{\rm (3)}] 
			 Given  $1 \leq i \leq n$, we have an edge 
			$w \stackrel{(i,\overline{\imath})}{\longrightarrow} 
			w(i,\overline{\imath})$ if and only if there is no $k$ such
			that $i < k < \overline{\imath}$ (or, equivalently, $i<k\leq n$) and 
			$w(i) \prec w(k) \prec w(\overline{\imath})$.
	\end{enumerate}
	\label{prop:quantum_bruhat_order_type_C}
\end{proposition}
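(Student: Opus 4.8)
The plan is to argue directly from the definition \eqref{eqn:qbruhat_edge}: for a positive root $\alpha$ there is an edge $w\stackrel{\alpha}{\to}ws_\alpha$ exactly when either $\ell(ws_\alpha)=\ell(w)+1$ (a Bruhat cover) or $\ell(ws_\alpha)=\ell(w)-2\langle\rho,\alpha^\vee\rangle+1$ (a quantum edge). The first thing I would record is the prescribed quantum drop $2\langle\rho,\alpha^\vee\rangle-1$ for each of the three families of positive roots, using $\rho=(n,n-1,\dots,1)$ and the coroots $(\varepsilon_i-\varepsilon_j)^\vee=\varepsilon_i-\varepsilon_j$, $(\varepsilon_i+\varepsilon_j)^\vee=\varepsilon_i+\varepsilon_j$, $(2\varepsilon_i)^\vee=\varepsilon_i$; this gives the drops $2(j-i)-1$, $2(2n-i-j+2)-1$, and $2(n-i+1)-1$ respectively. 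The second input is the reflection length $\ell(s_\alpha)$, computed from the $B_n$ length formula by counting positive roots sent to negative ones: a short calculation gives $\ell(s_{\varepsilon_i-\varepsilon_j})=2(j-i)-1$, $\ell(s_{2\varepsilon_i})=2(n-i)+1$, and $\ell(s_{\varepsilon_i+\varepsilon_j})=4n-2i-2j+1$. Comparing the two inputs produces the dichotomy that organizes everything: for $\varepsilon_i-\varepsilon_j$ and $2\varepsilon_i$ the prescribed drop equals $\ell(s_\alpha)$, so a quantum edge is exactly a \emph{maximal} length drop, whereas for $\varepsilon_i+\varepsilon_j$ the prescribed drop is $\ell(s_\alpha)+2$, which exceeds $\ell(s_\alpha)$, so there are \emph{no} quantum edges and every edge of this type is a Bruhat cover.

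This dichotomy explains the two different shapes among the three statements. In cases (1) and (3) I would treat covers and maximal-drop quantum edges on the same footing, exactly as in type $A$. The reflection $(i,j)$ acts on the full one-line notation as the bar-symmetric pair $t_{ij}t_{\overline{\jmath}\,\overline{\imath}}$, while $(i,\overline{\imath})$ acts as the single central transposition $t_{i\overline{\imath}}$; in both cases the bar symmetry means that the change in $\ell$ is governed entirely by the positions $k$ lying strictly between the two swapped window positions, which is why in (3) the range $i<k<\overline{\imath}$ collapses to $i<k\le n$. Running the same inversion count as in Proposition \ref{prop:quantum_bruhat_order_type_A}, I would show that a cover occurs when the value at the later position is larger with no intermediate value in the usual order, and that a maximal drop occurs when it is smaller with no value of an intervening position in the complementary arc; these two cases fuse into the single condition that no intervening position carries a value lying in the circular interval between $w(i)$ and the target value in the order $\prec_{w(i)}$. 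Here I expect to be able to import most of the bookkeeping from the already established type $A$ description on the extended alphabet $[\overline{n}]$, rather than redo it from scratch.

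The genuinely new and most delicate case is (2), the roots $\varepsilon_i+\varepsilon_j$. The reflection now acts as the crossing pair $t_{i\overline{\jmath}}t_{j\overline{\imath}}$, which is not bar-symmetric in the same way, and since there are no quantum edges the task reduces to characterizing the Bruhat covers $w\lessdot w(i,\overline{\jmath})$. The ascent half, $\ell(w(i,\overline{\jmath}))>\ell(w)$, is $w(\varepsilon_i+\varepsilon_j)>0$, and unwinding this through $w(\overline{\jmath})=\overline{w(j)}$ is precisely what produces the two sign constraints $\sign(w(i))=\sign(w(\overline{\jmath}))$ and $w(i)<w(\overline{\jmath})$; the minimality half then translates, via the length formula, into the absence of a position $k$ with $i<k<\overline{\jmath}$ and $w(i)<w(k)<w(\overline{\jmath})$. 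I expect the hard part to be exactly the sign bookkeeping in this case: one must track how the simultaneous swaps $i\leftrightarrow\overline{\jmath}$ and $j\leftrightarrow\overline{\imath}$ create and destroy inversions of both the $\varepsilon_a-\varepsilon_b$ and the $\varepsilon_a+\varepsilon_b$ type, confirm that the only configuration giving a net change of $+1$ is the stated one, and use the reflection-length computation of the first paragraph to rule out any downward jump. Once the three cases are assembled, the proposition follows.
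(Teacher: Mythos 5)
The paper itself offers no proof of this proposition: it is quoted from \cite{Lenart}, so there is no internal argument to compare against; I can only judge your plan on its merits. Your organizing computation is correct and is indeed the right skeleton (it matches the quantum-root dichotomy used in the cited source): with $\rho=(n,n-1,\dots,1)$ the prescribed quantum drops are $2(j-i)-1$, $2(n-i+1)-1$, $4n-2i-2j+3$ for $\varepsilon_i-\varepsilon_j$, $2\varepsilon_i$, $\varepsilon_i+\varepsilon_j$, while the reflection lengths are $2(j-i)-1$, $2(n-i)+1$, $4n-2i-2j+1$; so quantum edges for the first two families are exactly maximal length drops, and the roots $\varepsilon_i+\varepsilon_j$ admit no quantum edges at all, which correctly explains why statement (2) has a cover-only form.

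The genuine gap is in your treatment of case (2). You claim that the ascent condition $\ell(w(i,\overline{\jmath}))>\ell(w)$, i.e.\ $w(\varepsilon_i+\varepsilon_j)>0$, ``is precisely what produces'' the two constraints $w(i)<w(\overline{\jmath})$ and $\sign(w(i))=\sign(w(\overline{\jmath}))$. This equivalence is false. If $w(i)$ and $w(j)$ are both unbarred, then $w(\varepsilon_i+\varepsilon_j)=\varepsilon_{|w(i)|}+\varepsilon_{|w(j)|}>0$ is an ascent, yet $\sign(w(i))=+\ne -=\sign(w(\overline{\jmath}))$, so the proposition's sign condition fails. Concretely, take $w=\mathrm{id}$ in $C_2$ and $\alpha=\varepsilon_1+\varepsilon_2$: your criterion (ascent, plus a vacuous intermediate-$k$ condition) would declare an edge $12\longrightarrow \overline{2}\,\overline{1}$, but $\ell$ jumps from $0$ to $3$, so there is no cover and hence no edge; the sign condition in the proposition is exactly what rules this out. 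In other words, the sign constraint is not a reformulation of ascent; it encodes part of the requirement that the length increase be exactly $1$ (one can check that whenever $w(i),w(j)$ are both unbarred the jump is at least $3$), and extracting it is precisely the inversion bookkeeping you postpone. The plan is repairable --- fold the exclusion of the both-unbarred configuration into the ``exactly $+1$'' analysis rather than into the ascent half --- but as written, the reduction in case (2) asserts a false equivalence, and carrying it out would yield a characterization strictly weaker than the one being proved.
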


We now consider the specialization of the quantum alcove model to type $C$. 
For any $k=1, \ldots , n$, we have the following $\omega_k$-chain, from $A_{\circ}$ to 
$A_{-\omega_k}$, denoted by 
	$\Gamma(k)$ \cite{LenartHHL_height_counting}:
	\begin{align}
		\Gamma(k):=\, & \Gamma_l(k)\Gamma_r(k) \text{ where }\nonumber \\ 
		\Gamma_l(k):=\, & \Gamma_{kk} \dots \Gamma_{k1},\;\;\;
		\Gamma_r(k):=\, \Gamma_k \dots \Gamma_2,\, \label{eqn:lambdachainC} \\ 
		\Gamma_i:=\, & 
		\left( 
		(i,\overline{i-1}), (i, \overline{i-2}), \dots , (i, \overline{1}) 
		\right),\,\nonumber \\ 
		\begin{matrix}
			\Gamma_{ki} :=\, \\\mbox{} \\\mbox{} \\\mbox{}   
		\end{matrix}
		&
		\begin{matrix*}[l]
			( (i,k+1),&    (i,k+2),&     \ldots,&  (i,n),  \nonumber  \\
			\phantom{(} (i,\overline{\imath}), & & & \nonumber \\ 
			\phantom{(} 
		 (i,\overline{n}),& (i, \overline{n-1}),& \dots,& (i, \overline{k+1}),\nonumber \\ 
		 \phantom{(} 
		 (i,\overline{i-1}),&  (i,\overline{i-2}),& \ldots,& (i,\overline{1}))\,.\nonumber  
		\end{matrix*}
	\end{align}

Fix a dominant weight/partition $\lambda$ for the remainder of this section. We construct a $\lambda$-chain 
$\Gamma=(\beta_1, \beta_2, \dots, \beta_m)$ as a concatenation 
$\Gamma:= \Gamma^1 \dots \Gamma^{\lambda_1}$, where $\Gamma^j= \Gamma(\lambda'_j)$;
we also let $\Gamma_l^j:=\Gamma_l(\lambda'_j)$ and $\Gamma_r^j:=\Gamma_r(\lambda'_j)$.
Like in type $A$, given a set $J=\left\{ j_1 < \dots < j_s \right\}$ 
of folding positions in $\Gamma$, not necessarily admissible, we  
let $T$ be the corresponding list of 
roots of $\Gamma$.
We factor $\Gamma$ as $\Gamma=\widetilde{\Gamma}^1\dots \widetilde{\Gamma}^{2\lambda_1}$, where
$\widetilde{\Gamma}^{2i-1}=\Gamma^i_l$ and $\widetilde{\Gamma}^{2i}=\Gamma^i_r$, for
$1\leq i \leq \lambda_1$.
This factorization of $\Gamma$ induces a factorization of $T$ as 
$T^1T^2 \dots T^{2\lambda_1}$, and of $\Delta=\Gamma(J)$ as $\Delta=\Delta^1 \dots \Delta^{2\lambda_1}$. 
Like in type $A$, we use the notation $\gamma_k\in\Delta^q$ to indicate that the $k$th root in $\Delta$ falls in the segment $\Delta^q$. 
We denote by $T^1T^2 \dots T^j$ the permutation obtained by
composing the type $C$ transpositions in $T^1, \dots , T^j$ left to right.
For $w \in W$ written in the window notation as $w=w_1w_2 \dots w_n$, let $w[i,j]=w_i \dots w_j$.

We now recall from \cite{Lenart} the construction of the correspondence between the type $C$ quantum alcove model and model based on diagram fillings. 

	  \begin{definition}
		  \label{definition:fillTypeC}
	  Let $\pi_{j}=\pi_{j}(T):=T^1 \dots T^j$. We define the \emph{filling map},
	  which associates with each $J\subseteq[m]$ a filling of the Young diagram $2\lambda$, by 
	  \begin{equation}
		  \label{eqn:filling_mapTypeC}
		  \fill(J)=\fill(T):=C_{1}\dots C_{2\lambda_1}\,,\;
		  \mbox{ where } C_{i}:=\pi_i[1,\lambda'_{\lceil \frac{i}{2}\rceil}].
	  \end{equation}
	  We define the \emph{sorted filling map} $\sfill(J)$ by sorting ascendingly the 
	columns of the filling $\fill(J)$.
	  \end{definition}
	  
	  For an example we refer to \cite{Lenart}[Examples 5.3 and 5.5].



Recall from \eqref{btensl} the definition of $B^{\otimes \lambda}$, which is now realized with split KN columns, see Section \ref{subsection:KR-crystals}. As such, its arrows are given by $f_p^2$, according to Proposition \ref{typeCcryst} (2).

\begin{theorem}{\rm \cite{Lenart}[Theorem 6.1]}\label{theorem:bijectionTypeC}
	The map $\sfill$ is a bijection between $\A(\Gamma)$ and $\B$.
\end{theorem}
We now state the main result of this section,  cf. Theorem {\rm \ref{theorem:crystal_isomorphism}} in type $A$.
\begin{theorem}
	\label{theorem:crystal_isomorphismTypeC}
	The map $\sfill$ is an affine crystal isomorphism between ${\mathcal A}(\Gamma)$ and the subgraph of
$B^{\otimes \lambda}$ consisting of the dual Demazure arrows.
\end{theorem}

\begin{remarks}\label{nddC} (1) The affine crystal isomorphism in Theorem \ref{theorem:crystal_isomorphismTypeC} is not guaranteed to be unique in general. However, we believe that it coincides with the bijection that we plan to construct (in a type-independent setup) in order to prove Conjecture \ref{mainmainconj}. See Remark \ref{perfectcase} (1).  

(2) In \cite{Lenart} it was shown that the map $\sfill$ translates the height statistic to the charge statistic, which is known to express the energy function in the model based on KN columns, cf. Theorem \ref{theorem:charge}. This should be compared with Theorem \ref{mainconj} (2), where the constant $C$ is $0$ in this case.
\end{remarks}

The proof of Theorem \ref{theorem:bijectionTypeC} is parallel to the proof of Theorem \ref{theorem:crystal_isomorphism}. 
In this case, we use the height counting lemma in
type $C_n$, namely Lemma \ref{lemma:height_countingTypeC}. 
As before let $N_i(\sigma)$ denote the number of entries $i$ in a filling $\sigma$.
Let $c_i=c_i(\sigma) := \frac{1}{2} ( N_i(\sigma) - N_{\overline{\imath}}(\sigma)$) and define
the content of a filling $\sigma$ as $\ct(\sigma):=(c_1,c_2,\dots,c_n)$, which is identified with a type $C_n$ weight.
Let $\sigma[q]$ be the filling consisting of the columns $1,2, \dots , q$ of $\sigma$.
Given a $\lambda$-chain and a corresponding sequence $J$ (not necessarily
admissible), recall the related notation, including the heights $l_k^J$ in (\ref{deflev}), the sequence of roots $\Delta$, 
and its factorization.

\begin{lemma}{\rm \cite{LenartHHL_height_counting}[Proposition 4.6 (2)]}
	\label{lemma:weightTypeC}
	Let $J \subseteq [m]$, and $\sigma=\fill(J)$. Then we have 
	$\mu(J)=\ct(\sigma)$.
\end{lemma}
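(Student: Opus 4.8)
The statement is the type $C$ counterpart of Lemma \ref{lemma:weight}, and the plan is to prove it along the same lines, in two stages: first obtain a closed form for the weight $\mu(J)$ purely in terms of the folding data, and then match this against the content of the filling produced by Definition \ref{definition:fillTypeC}. Only the second stage is genuinely type-dependent.

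First I would establish the closed form
\begin{equation*}
\mu(J) = \lambda - \sum_{j \in J}\widetilde{l}_j\,\gamma_j,
\end{equation*}
valid in all types, by induction on $s = |J|$. Writing $J' = \{j_1 < \cdots < j_{s-1}\}$ and using $\widehat{r}_j = s_{\beta_j,-l_j}$ together with the affine-reflection identity $s_{\alpha,k}(\nu) = s_\alpha(\nu) + k\alpha$ (the same one used in the proof of Proposition \ref{prop:rootF}), a short computation gives $\widehat{r}_{j_s}(-\lambda) = -\lambda + \widetilde{l}_{j_s}\beta_{j_s}$, where $\widetilde{l}_{j_s} = \inner{\lambda}{\beta_{j_s}^\vee} - l_{j_s}$. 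Applying the affine map $\widehat{r}_{j_1}\cdots\widehat{r}_{j_{s-1}}$, whose linear part is $r_{j_1}\cdots r_{j_{s-1}}$, and negating yields $\mu(J) = \mu(J') - \widetilde{l}_{j_s}\gamma_{j_s}$, since $\gamma_{j_s} = r_{j_1}\cdots r_{j_{s-1}}(\beta_{j_s})$. Crucially the $\gamma_j$ with $j \in J'$ are unchanged when $j_s$ is removed, so the inductive hypothesis closes the argument; the base case is $\mu(\emptyset) = \lambda$.

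For the second stage I would compute $\ct(\sigma)$ directly from $\sigma = \fill(J) = C_1\cdots C_{2\lambda_1}$. Content is additive over columns, so $\ct(\sigma) = \sum_i \ct(C_i)$, and each $C_i = \pi_i[1,\lambda'_{\lceil i/2\rceil}]$ is read off from the signed permutation $\pi_i = T^1\cdots T^i$: an unbarred entry $a$ contributes $+\tfrac12$ to the coordinate $c_a$ and a barred entry $\overline{a}$ contributes $-\tfrac12$, reflecting the normalization $c_i = \tfrac12(N_i - N_{\overline{\imath}})$. The case $J = \emptyset$ gives the straight columns, for which $\ct(\fill(\emptyset)) = \lambda$, matching the empty sum. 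I would then proceed by induction on the segments $\widetilde{\Gamma}^1,\dots,\widetilde{\Gamma}^{2\lambda_1}$, comparing $\ct(\sigma[q])$ with the corresponding truncation of the running weight, and showing that the net content contributed as one passes a folding position $j$ is exactly $-\widetilde{l}_j\gamma_j$; here the explicit shape of the type-$C$ chain $\Gamma(k) = \Gamma_l(k)\Gamma_r(k)$ and the left/right split-column structure (Definition \ref{definition:KN_doubled_columns}) control which coordinates of the running content change and by how much. Summing over $j \in J$ and invoking the closed form from the first stage then gives $\ct(\sigma) = \mu(J)$.

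The main obstacle is the bookkeeping forced by the type-$C$ doubling. Because $\fill(J)$ has $2\lambda_1$ columns packaged as split columns $(lC, rC)$, and each single column carries only half-integer content, one must verify that the half-integer contributions from the paired left and right columns combine to the correct integer increment, with consistent signs across barred and unbarred values. This requires a case analysis on the type of the root $\gamma_j$: the long roots $\pm 2\varepsilon_i$ and the short roots $\pm\varepsilon_i\pm\varepsilon_j$ behave differently, and a fold occurring in the $\Gamma_l$ part versus the $\Gamma_r$ part of a segment affects the barred and unbarred halves asymmetrically. Carrying this case analysis through, and checking the increment against $-\widetilde{l}_j\gamma_j$ in each case, is the technical heart of the argument; the first-stage formula serves only to assemble the verified local increments into the global identity.
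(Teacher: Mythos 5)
The first thing to say is that the paper has no internal proof of this lemma to compare against: it is imported wholesale from \cite{LenartHHL_height_counting} (Proposition 4.6(2)), with only the remark that the cited proposition is proved there in the generality needed. So your proposal has to stand on its own. Its first stage does: the closed form $\mu(J)=\lambda-\sum_{j\in J}\widetilde{l}_j\,\gamma_j$ follows exactly as you argue, by peeling off the largest folding position, using $s_{\alpha,k}(\nu)=s_{\alpha}(\nu)+k\alpha$, the fact that the linear part of $\widehat{r}_{j_1}\cdots\widehat{r}_{j_{s-1}}$ is $r_{j_1}\cdots r_{j_{s-1}}$, and the observation that the roots $\gamma_j$, $j\in J'$, are unchanged when $j_s$ is removed. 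The problem is the second stage, which you correctly identify as the only type-$C$-specific part and then explicitly defer (``the technical heart''). Since that deferred step is the entire content of the lemma beyond generalities, the proposal as written has a genuine gap: it stops exactly where the work begins.

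Here is the missing statement, made concrete so you can see what the case analysis must deliver. Since an unbarred entry $a$ of a column contributes $\tfrac12\varepsilon_a$ and $\overline{a}$ contributes $-\tfrac12\varepsilon_a$, one has $\ct(C_q)=\tfrac12\,\pi_q(\omega_{h_q})$ with $h_q=\lambda'_{\lceil q/2\rceil}$. Adding the largest folding position $j_s$, lying in half-segment $q_0$, replaces $\pi_q$ by $\pi_q r_{j_s}$ for every $q\ge q_0$ and leaves earlier columns untouched; moreover $\pi_q(T')(\beta_{j_s})=\gamma_{j_s}$ for all such $q$. Hence the total change of content is
\[
-\frac12\Bigl(\sum_{q\ge q_0}\inner{\omega_{h_q}}{\beta_{j_s}^{\vee}}\Bigr)\gamma_{j_s},
\]
spread over \emph{all} columns weakly to the right of the fold's segment (it is not a local increment ``as one passes'' the fold). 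Your claimed increment $-\widetilde{l}_{j_s}\gamma_{j_s}$ is therefore equivalent to the identity $\tfrac12\sum_{q\ge q_0}\inner{\omega_{h_q}}{\beta_{j_s}^{\vee}}=\widetilde{l}_{j_s}$, and this is precisely where the structure of $\Gamma(k)=\Gamma_l(k)\Gamma_r(k)$ must be used: (i) a positive root $\beta$ occurs $\inner{\omega_k}{\beta^{\vee}}\in\{0,1,2\}$ times in $\Gamma(k)$; (ii) when it occurs twice (the roots $\varepsilon_a+\varepsilon_b$ with $a<b\le k$), exactly one occurrence lies in $\Gamma_l(k)$ and one in $\Gamma_r(k)$; (iii) a root occurring once lies in $\Gamma_l(k)$, never only in $\Gamma_r(k)$. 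These three facts make the half-sum equal the number $\widetilde{l}_{j_s}$ of occurrences of $\beta_{j_s}$ at positions $\ge j_s$, both for $q_0$ odd and for $q_0$ even; they are true, so your strategy is viable, but proving them is the deferred case analysis, not a routine verification. One further correction: Definition~\ref{definition:KN_doubled_columns} cannot be an ingredient here, since the lemma concerns arbitrary $J\subseteq[m]$, for which the column pairs of $\fill(J)$ need not be split KN columns at all; what controls the computation is only the chain $\Gamma$, i.e., facts (i)--(iii).
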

\begin{corollary}
	\label{corollary:linfinityTypeC}
	Let $J \subseteq [m]$, $\sigma=\fill(J)$, and
	$\alpha \in \Phi$. 
	Then $\sgn(\alpha)l_{\alpha}^{\infty} = \inner{\ct(\sigma)}{\alpha^{\vee}}$.
\end{corollary}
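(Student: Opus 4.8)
The plan is to deduce this directly from Lemma \ref{lemma:weightTypeC}, exactly as Corollary \ref{corollary:linfinity} follows from Lemma \ref{lemma:weight} in type $A$; the statement is a purely formal consequence, with all the type-$C$-specific content (the factor of $\frac{1}{2}$ and the barred letters in the definition of $\ct$, as well as the short/long distinction among coroots) already absorbed into that lemma.

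First I would unwind the definition of $l_{\alpha}^{\infty}$. Recall from Section \ref{subsection:crystalop} that $l_{\alpha}^{\infty}:=\inner{\mu(J)}{\sgn(\alpha)\alpha^{\vee}}$. Since the scalar product is linear in its second argument and $\sgn(\alpha)\in\{\pm 1\}$, I may pull the sign out and use $\sgn(\alpha)^2=1$:
\[
\sgn(\alpha)\,l_{\alpha}^{\infty}=\sgn(\alpha)\inner{\mu(J)}{\sgn(\alpha)\alpha^{\vee}}=\sgn(\alpha)^2\inner{\mu(J)}{\alpha^{\vee}}=\inner{\mu(J)}{\alpha^{\vee}}.
\]

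Finally I would invoke Lemma \ref{lemma:weightTypeC}, which asserts $\mu(J)=\ct(\sigma)$ for $\sigma=\fill(J)$. Substituting this identity into the right-hand side above yields $\sgn(\alpha)\,l_{\alpha}^{\infty}=\inner{\ct(\sigma)}{\alpha^{\vee}}$, as desired. There is no real obstacle in this step: the entire weight of the argument rests on Lemma \ref{lemma:weightTypeC} (taken from \cite{LenartHHL_height_counting} in the more general form noted just after its statement), and the present corollary merely repackages that equality of weights by pairing both sides with the coroot $\alpha^{\vee}$.
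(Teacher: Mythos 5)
Your proof is correct and matches the paper's (implicit) argument exactly: the corollary is stated in the paper as an immediate consequence of Lemma \ref{lemma:weightTypeC}, obtained precisely by pairing $\mu(J)=\ct(\sigma)$ with $\alpha^{\vee}$ and cancelling the sign via $\sgn(\alpha)^2=1$, just as you do. You also correctly flag the one subtlety — that the cited result from \cite{LenartHHL_height_counting} must be used in its more general form — which the paper itself notes right after the lemma.
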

\begin{lemma}{\rm \cite{LenartHHL_height_counting}[Proposition 6.1]}
	\label{lemma:height_countingTypeC}
Let $J \subseteq [m]$, and $\sigma=\fill(J)$.
For a fixed $k$, let $\gamma_k$ be a root in $\Delta^{q+1}$. 
We have 
	\[
	\sgn(\gamma_k)\,\l{k} = \langle \ct(\sigma[q]),\gamma_k^{\vee}\rangle.
	\] 
\end{lemma}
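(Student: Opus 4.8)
The identity is the intermediate-position refinement of Corollary~\ref{corollary:linfinityTypeC}. That corollary records the level of the $\alpha$-hyperplane after the \emph{entire} folded path, since $l_\alpha^\infty = \inner{\mu(J)}{\sgn(\alpha)\alpha^\vee}$ and $\mu(J)=\ct(\sigma)$ by Lemma~\ref{lemma:weightTypeC}; here, instead, we need the level $\l{k}$ of the hyperplane crossed at the intermediate step $k$. The plan is to realize $\sgn(\gamma_k)\l{k}$ as a level \emph{at infinity} for the path truncated at the end of block $q$, and then to invoke Corollary~\ref{corollary:linfinityTypeC} for that truncation. This is the exact type-$C$ analogue of the type-$A$ Lemma~\ref{lemma:height_counting}, so I would follow that template and scrutinize only the steps that are genuinely type-dependent.

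First I would compute $\l{k}$ from its defining relation~\eqref{deflev}. Writing the composite affine reflection as $\widehat{r}_{j_1}\cdots\widehat{r}_{j_p}=t_\nu\,w$, with linear part $w=r_{j_1}\cdots r_{j_p}$ and some translation $\nu$, and using $H_{|\gamma_k|,-\l{k}}=H_{\gamma_k,-\sgn(\gamma_k)\l{k}}$ together with the hyperplane-translation formula $t_\nu(H_{\beta,c})=H_{\beta,\,c+\inner{\nu}{\beta^\vee}}$, one obtains
\[
\sgn(\gamma_k)\,\l{k}=l_k-\inner{\nu}{\gamma_k^\vee},
\]
exactly as in the computation of $\mu(f_p(J))$ in the proof of Proposition~\ref{prop:rootF}. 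Using $\gamma_k^\vee=\sgn(\gamma_k)\,|\gamma_k|^\vee$ and Corollary~\ref{corollary:linfinityTypeC} applied to $\sigma[q]$, viewed as the filling of the chain truncated at the end of block $q$, the desired identity then reduces to the purely geometric claim that $\l{k}$ equals the level of the $|\gamma_k|$-hyperplane reached at the end of block $q$; equivalently, that the right-hand side only ``sees'' the columns up through $q$.

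The heart of the matter, and the step I expect to be the main obstacle, is this last claim: the foldings lying inside block $q+1$ but before position $k$ must not affect the pairing with $\gamma_k^\vee$, while the integer $l_k$ must be accounted for exactly by the occurrences of $\beta_k$ through block $q$. This is where the explicit structure of the type-$C$ chains $\Gamma(\lambda'_{q+1})=\Gamma_l(\lambda'_{q+1})\Gamma_r(\lambda'_{q+1})$ must be used, and it has to be checked separately for the three classes of roots $\varepsilon_i-\varepsilon_j$, $\varepsilon_i+\varepsilon_j$, and $2\varepsilon_i$, whose coroots behave differently; the circular-order description of the quantum Bruhat edges is the tool that controls which reflections preceding $\beta_k$ are parallel to it. Two further type-$C$ wrinkles require care: the content normalization $c_i=\tfrac12(N_i(\sigma)-N_{\overline{\imath}}(\sigma))$, so that in the long-root case the factor $\tfrac12$ must cancel against the coroot of $2\varepsilon_i$; and the doubling of each column of $\lambda$ into the split pair $(lC,rC)$ indexed by $\Gamma_l$ and $\Gamma_r$, which forces the truncation argument to be run for half-blocks (odd $q$) as well as full ones, and in particular requires identifying $\sigma[q]$ with the filling of a genuine truncated $\lambda$-chain. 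None of these is conceptually deep, but the bookkeeping across the three root lengths and the two split halves is the real work; this is precisely what \cite{LenartHHL_height_counting} carries out in its Proposition~6.1.
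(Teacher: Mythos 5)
First, a point of comparison: the paper itself contains \emph{no} proof of this lemma. The statement is imported verbatim from \cite{LenartHHL_height_counting}, Proposition 6.1, exactly as the citation in its header indicates (the same is true of its type-$A$ counterpart, Lemma \ref{lemma:height_counting}). So the paper's ``approach'' is a citation, and your write-up, stripped of its preliminaries, ends in the very same citation.

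Judged as a proof, however, your argument is circular at the decisive moment. The steps you actually carry out are correct but amount only to a translation of the statement: writing $\widehat{r}_{j_1}\cdots\widehat{r}_{j_p}=t_{\nu}w$ with $w=r_{j_1}\cdots r_{j_p}$ and deducing from \eqref{deflev} that $\sgn(\gamma_k)\,\l{k}=l_k-\inner{\nu}{\gamma_k^{\vee}}$ is right for both signs of $\gamma_k$, and Lemma \ref{lemma:weightTypeC} does convert the desired right-hand side into a statement about the weight of a truncated subset. But the ``purely geometric claim'' to which you reduce --- that $l_k$ is accounted for exactly by the occurrences of $\beta_k$ up through block $q$, and that the foldings inside block $q+1$ preceding position $k$ do not move the pairing against $\gamma_k^{\vee}$ --- is not a smaller auxiliary fact; it is the lemma itself after your (correct) rewriting, and your justification for it is to invoke \cite{LenartHHL_height_counting}, Proposition 6.1, i.e., the statement under proof. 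Moreover, two of the difficulties you flag in passing are genuine obstructions to your scheme rather than bookkeeping: (i) for odd $q$ the truncated path ends after a half-block $\Gamma_l(\lambda'_{(q+1)/2})$ and is then \emph{not} a reduced alcove path from $A_{\circ}$ to any $A_{-\mu}$ (one can already check this in $C_2$, where $\Gamma_l(2)=((2,\overline{2}),(2,\overline{1}),(1,\overline{1}))$ ends at an alcove that is not a weight translate of $A_{\circ}$), so Corollary \ref{corollary:linfinityTypeC} and Lemma \ref{lemma:weightTypeC} do not apply to that truncation as stated, and the notion of its ``weight'' is not even defined; (ii) the non-recurrence of $\beta_k$ within the half-block containing position $k$ must be verified from the explicit lists $\Gamma_{ki}$ and $\Gamma_i$, separately for the roots $\varepsilon_i-\varepsilon_j$, $\varepsilon_i+\varepsilon_j$, and $2\varepsilon_i$ (note that a root \emph{does} recur across the two halves $\Gamma_l$, $\Gamma_r$ of a full block). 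Until these points are actually proved, none of the lemma's content has been established.
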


%

	We now introduce notation to be used for the remainder of this section.
	Let $p \in \{0,1, \dots, n \}$. 
	Let $J$ be an admissible sequence, and $\sigma=\sfill(J)=C_1\ldots C_{2\lambda_1}$, which is guaranteed to be in $B^{\otimes \lambda}$ by Theorem \ref{theorem:bijectionTypeC}. 
	Let $a_i:=\inner{\ct(C_i)}{\alpha_p^{\vee}}$. We have $a_i\in \left\{-1,-\frac{1}{2},0,\frac{1}{2},1\right\}$,
	for $1\leq p \leq n-1$, and $a_i \in \left\{ -\frac{1}{2},0,\frac{1}{2} \right\}$ for
	$p\in \left\{ 0,n \right\}$.
	
	\begin{remarks}\label{allpcases}
	(1) The value of $a_i$ indicates which entries related to the action of $f_p$ are contained in column $C_i$, as we now explain. Assuming first that $1 \leq p \leq n-1$, the relevant entries are ${\mathcal P}:=\{p,\,p+1,\,\overline{p+1},\,\overline{p}\}$. 
	 If $a_i = 1$ (resp. $a_i=-1$), then $C_i$ contains both $p$ and $\overline{p+1}$ (resp. $p+1$ and $\overline{p}$). If $a_i = \frac{1}{2}$ (resp. $a_i=-\frac{1}{2}$), then $C_i$ contains only one of $p,\,\overline{p+1}$ (resp. only one of $p+1,\,\overline{p}$), while if $a_i=0$ then $C_i$ contains both $p$ and $p+1$, or both $\overline{p+1}$ and $\overline{p}$, or none of these elements.
	 For $p = n$, the relevant entries are $n$ and $\overline{n}$. If $a_i = \frac{1}{2}$ (resp. $a_i=-\frac{1}{2}$), then $C_i$ contains $n$ (resp. $\overline{n}$), while if $a_i=0$  then  $C_i$ contains both $n$ and $\overline{n}$, or none of these elements. The case $p=0$ is similar to $p=n$: just replace $n$ and $\overline{n}$ with $\overline{1}$ and $1$, respectively. 
		
	(2)	The sequence $a_i$ corresponds to the $p$-signature of the filling $\sigma$. To be more precise, associate with this sequence a $(+,-)$-word by replacing $\pm\frac{1}{2}$ with $\pm$ and $\pm 1$ with $\pm\pm$ (the $0$'s are ignored). If $1\le p\le n-1$, this is the same as the $(+,-)$-word associated with $\sigma$ (see Section \ref{subsection:KR-crystals}) after cancelling $-+$ pairs corresponding to the entries $p$ and $p+1$ (or $\overline{p+1}$ and $\overline{p}$) in a column; similarly for $p=n$ and $p=0$. 
	\end{remarks}
		
	Let $h_j:= \inner{\ct(\sigma[j])}{\alpha_p^{\vee}}=\sum_{i=0}^j a_i$, 
	with $a_0=h_0:=0$. Like in type $A$, let $M'\ge 0$ be the maximum of $h_j$, 
	and let $m'$ be minimal with the property 
	$h_{m'}=M'$. If $M'>0$ then $m'$ is the number of the column containing the entry changed by $f_p$. Recall that  we need to apply $f_p$ twice; the way in which this can happen is described below. 

\begin{proposition}\label{actfp2} If $1\le p\le n-1$, then we always have one of the following cases related to the action of $f_p^2$ on the filling $\sigma$. 
\begin{enumerate}
\item[{\rm (i)}] $m'=2i-1$ and $a_{m'}=1$: $p$ and $\overline{p+1}$ in column $m'$ are changed to $p+1$ and $\overline{p}$.
\item[{\rm (ii)}] same as {\rm (i)} with $m'=2i$. 
\item[{\rm (iii)}] $m'=2i$ and $a_{m'}=a_{m'-1}=\frac{1}{2}$: columns $m'$ and $m'-1$ both contain an entry $p$ (or both contain $\overline{p+1}$), and these entries are changed to $p+1$ (resp. $\overline{p}$).
\end{enumerate}
If $p=n$ or $p=0$, then the analogue of case {\rm (iii)} always holds, with $n$ changed to $\overline{n}$, resp. $\overline{1}$ changed to $1$. 
\end{proposition}

\begin{proof} We implicitly use the following observation, which is immediate from the construction of the splitting $(lC,rC)$ of a column $C$ in Definition \ref{definition:KN_doubled_columns}: given $x\in[n]$, the column $lC$ contains $x$ or $\overline{x}$  if and only if $rC$ does. 

We consider only $1\le p\le n-1$, as the proof is simpler for $p=n$ and $p=0$. We first prove the following claim: if $a_{2i-1}=\pm\frac{1}{2}$ (or $a_{2i}=\pm\frac{1}{2}$), then $C_{2i-1}$ and $C_{2i}$ contain a single element in ${\mathcal P}$, the two elements have the same absolute value, and the pair $(a_{2i-1},\,a_{2i})$ can take only the following values: $\left(\frac{1}{2},\,\frac{1}{2}\right)$, $\left(-\frac{1}{2},-\,\frac{1}{2}\right)$, or  $\left(-\frac{1}{2},\,\frac{1}{2}\right)$. 

We consider only the case $a_{2i-1}=\frac{1}{2}$, as the others are completely similar. The assumption implies that column $C_{2i-1}$ contains a single element in $\mathcal P$, namely $p$ or $\overline{p+1}$. In the first case, it is clear that $C_{2i}$ does not contain $p+1$ or $\overline{p+1}$, but it contains either $p$ or $\overline{p}$. It suffices to rule out the occurence of $\overline{p}$. Assuming it, we deduce that the column $C$ whose splitting is $(C_{2i-1},C_{2i})$ contains $z_j>p$ and $\overline{z_j}$, and the corresponding $t_j<z_j$ is $p$ (cf. Definition \ref{definition:KN_doubled_columns}). But $C$ cannot contain $p+1$ or $\overline{p+1}$, so $z_j>p+1$, and the maximality of $t_j$ is contradicted. In the second case, we need to rule out the occurence of $p+1$ in $C_{2i}$. Assuming it, we deduce that $C$ contains $z_j=p+1$ and $\overline{z_j}$, and we have $t_j<p$, so again the maximality of $t_j$ is contradicted. 

Now consider the $(+,-)$-word associated with the sequence $a_j$, see Remark \ref{allpcases} (2). Cancel pairs $-+$ corresponding to the case $(a_{2i-1},\,a_{2i})=\left(-\frac{1}{2},\,\frac{1}{2}\right)$ mentioned above. The above claim implies that the resulting word is a concatenation of pairs $++$ and $--$ which come from $a_j=\pm 1$ and $(a_{2i-1},\,a_{2i})=\left(\pm\frac{1}{2},\,\pm\frac{1}{2}\right)$; recall that for the latter pairs, the claim also gives the corresponding entries in $\mathcal P$. The statement of the proposition now follows.
\end{proof}

	The following is the analogue of Lemma \ref{lemma:chain_filling}.
	\begin{lemma}
		\label{lemma:chain_fillingTypeC}
		If $\alpha_p=\gamma_k  \in \Delta^{2i-1}$ with $k \not \in J$ then we have either
		$a_{2i-1}=\frac{1}{2}$ and $a_{2i}=\frac{1}{2}$, or 
		$a_{2i-1}=1$.
		If $\alpha_p=\gamma_k \in \Delta^{2i}$ with $k \not \in J$ then $a_{2i}=1$.
	\end{lemma}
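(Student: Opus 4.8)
The plan is to follow the template of Lemma~\ref{lemma:chain_filling}, the only new features being the bookkeeping forced by barred entries and the need to compare the two halves of a doubled column. Write $\alpha_p=\gamma_k=w(\beta_k)$, where $w=r_{j_1}\cdots r_{j_p}$ is the product of the reflections at the folding positions strictly smaller than $k$, so that $\gamma_k=w(\beta_k)$ holds by the definition of $\Gamma(J)$. Since $w$ is orthogonal, $\beta_k$ and $\alpha_p$ have the same length: if $1\le p\le n-1$ then $\beta_k$ is a short root $\varepsilon_c\pm\varepsilon_d$, and if $p\in\{0,n\}$ then $\beta_k=2\varepsilon_c$ is long. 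In each case $w(\beta_k)=\alpha_p$ pins down the values of $w$ on the unbarred positions occurring in $\beta_k$; for example, for $1\le p\le n-1$ and $\beta_k=\varepsilon_c+\varepsilon_d$ one gets $\{w(c),w(d)\}=\P^+=\{p,\overline{p+1}\}$, and the signed-permutation identity $w(\overline{\imath})=\overline{w(\imath)}$ then automatically places the members $p+1,\overline{p}$ of $\P^-$ at the barred positions $\overline{d},\overline{c}$.

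First I would prove the analogue of the agreement claim in Lemma~\ref{lemma:chain_filling}: if $\beta_k$ lies in the block $\widetilde{\Gamma}^{q'}$ (with $q'=2i-1$ or $q'=2i$), then $\pi_{q'}$ coincides with $w$ on the positions occurring in $\beta_k$ (and hence, by the signed-permutation identity, on their bars as well). As in type $A$ this rests on inspecting the explicit structure of $\Gamma_l^i=\Gamma_l(\lambda_i')$ and $\Gamma_r^i=\Gamma_r(\lambda_i')$: among the roots of the block lying to the right of $\beta_k$, those involving one of these positions have a restricted form, and folding at any of them would contradict the description of the edges of the quantum Bruhat graph in type $C$ from \cite{Lenart} (for $p\ne0$ the length would increase by more than one, and for $p=0$ it would fail to drop maximally). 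Hence none of them is a folding position, these positions are not moved, and $\pi_{q'}$ agrees with $w$ on them. Since $w$ (and $\pi_{q'}$) is a signed permutation, the values $p,p+1,\overline{p},\overline{p+1}$ then occupy exactly the positions $c,d,\overline{c},\overline{d}$ (only $c,\overline{c}$ when $\beta_k=2\varepsilon_c$), and one reads off which of them land in the in-column range $1,\dots,\lambda_i'$ of $C_{q'}$.

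Reading off the content finishes parts of the statement immediately. For part (B), $\beta_k\in\Gamma_r^i$ forces $\beta_k=\varepsilon_c+\varepsilon_d$ with $d<c\le\lambda_i'$ and forces $1\le p\le n-1$; both indices lie in the column, so $C_{2i}$ contains both elements of $\P^+$ and, their bars being at the barred positions $\overline{c},\overline{d}$, no element of $\P^-$, whence $a_{2i}=1$. For part (A) with $\beta_k=\varepsilon_c+\varepsilon_d$ and both indices $\le\lambda_i'$, the same computation gives $a_{2i-1}=1$. In every remaining shape of $\beta_k$ in $\Gamma_l^i$ (the roots $\varepsilon_c-\varepsilon_d$ and $\varepsilon_c+\varepsilon_d$ with $d>\lambda_i'$, and $2\varepsilon_c$ when $p\in\{0,n\}$) exactly one index is $\le\lambda_i'$, so precisely one member of $\P^+$ and no member of $\P^-$ lands in $C_{2i-1}$, giving $a_{2i-1}=\frac12$.

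It remains, in this last case, to pass from $a_{2i-1}=\frac12$ to $a_{2i}=\frac12$, and this is where I expect the main obstacle to lie. Here $(C_{2i-1},C_{2i})$ is the split pair $(lC,rC)$ of the $i$-th KN column, and by Proposition~\ref{proposition:KN_splitting_one} the column $C_{2i}$ contains an element of a pair $\{x,\overline{x}\}$ exactly when $C_{2i-1}$ does; applied to the pairs $\{p,\overline{p}\}$ and $\{p+1,\overline{p+1}\}$ this shows that $C_{2i}$ meets $\{p,\overline{p}\}$ and avoids $\{p+1,\overline{p+1}\}$. The delicate point is to rule out that the splitting converts the single $\P^+$ element of $lC$ into the $\P^-$ element $\overline{p}$ (or $p+1$) of $rC$, which would spuriously give $a_{2i}=-\frac12$; controlling this requires tracking, via Definition~\ref{definition:KN_doubled_columns}, whether the relevant letter is a genuine entry of the KN column or an auxiliary split value $t_j$, exactly as in the proof of Proposition~\ref{proposition:KN_splitting_two}. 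The only other place where genuine work is concentrated is the non-edge verification in the agreement step, which must now be carried out over the three root lengths and the three cases of the type $C$ edge criterion.
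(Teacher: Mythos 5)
Your plan for part (B) and your identification of the splitting tools are reasonable, but the proof collapses at its central step: the ``agreement claim'' you transplant from type $A$ is false in type $C$, and with it your shape-based dichotomy (both indices of $\beta_k$ in the window $\Rightarrow a_{2i-1}=1$; one index $\Rightarrow a_{2i-1}=\tfrac12$). The danger comes from roots of $\Gamma_l^i$ occurring after $\beta_k$ that involve the \emph{out-of-window} position of $\beta_k$ but not its window position. Say $\beta_k=(c,d)=\varepsilon_c-\varepsilon_d$ with $c\le\lambda_i'<d$ and $u(c)=p$, $u(d)=p+1$, $u(\overline{d})=\overline{p+1}$. In type $A$ every later reflection touching a position of $\beta_k$ is blocked because $p,p+1$ are circularly adjacent; but in type $C$ the later root $(c'',\overline{d})=\varepsilon_{c''}+\varepsilon_d$ (with $c''<c$ a window position) is compared, in the edge criterion, against $u(\overline{d})=\overline{p+1}$, and the only possible blockers are \emph{barred} letters strictly between $u(c'')$ and $\overline{p+1}$ sitting inside the position interval $(c'',\overline{d})$: the letter $p$ at $c$ is unbarred, and $\overline{p}$ sits at $\overline{c}\notin(c'',\overline{d})$. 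So, e.g., $u(c'')=\overline{p+2}$ makes this folding an edge; it swaps $c''\leftrightarrow\overline{d}$ and $d\leftrightarrow\overline{c''}$, importing $\overline{p+1}\in\P^+$ into the window (so $a_{2i-1}=1$) and destroying the agreement at positions $d,\overline{d}$.

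This really happens for admissible $J$. Take $C_4$, $\lambda=(2,2)$, so both blocks equal $\Gamma(2)$. In block~1 fold, in order, $(2,3),(2,4),(2,\overline{2}),(2,\overline{4}),(2,\overline{3}),(2,\overline{1}),(1,3),(1,4),(1,\overline{1})$; each step is an edge by the type $C$ criteria, and one arrives at $w=\overline{4}\,\overline{1}\,2\,3$. In block~2 fold $(2,3)$, reaching $u_0=\overline{4}\,2\,\overline{1}\,3$; do \emph{not} fold $\beta_k=(2,4)$, for which $\gamma_k=u_0(\varepsilon_2-\varepsilon_4)=\varepsilon_2-\varepsilon_3=\alpha_2$, so the hypothesis of the lemma holds with $p=2$, $k\notin J$, and $\beta_k$ having exactly one index in the window; then fold $(1,\overline{4})$, which is an edge because $u_0(1)=\overline{4}<\overline{3}=u_0(\overline{4})$, the signs agree, and no letter lies between $\overline{4}$ and $\overline{3}$. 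This yields $\pi_3=\overline{3}\,2\,\overline{1}\,4$, hence $C_3=\{2,\overline{3}\}$ contains \emph{both} elements of $\P^+$ and $a_3=1$; your argument would conclude $a_3=\tfrac12$, which is false, and indeed $\pi_3(4)=4\ne 3=u_0(4)$, refuting the agreement claim. The disjunction in the statement (``or $a_{2i-1}=1$'') exists precisely to absorb this importing phenomenon; a correct proof must track all four letters of $\P$ and show that every failure of type-$A$-style persistence pushes $a_{2i-1}$ up to $1$ (and that no letter of $\P^-$ can ever enter the window), rather than assert persistence outright. Separately, the step you flag as the ``delicate point'' ($a_{2i-1}=\tfrac12\Rightarrow a_{2i}=\tfrac12$) is genuinely needed, and it can in fact be closed using Definition \ref{definition:KN_doubled_columns} alone (if the unique $\P^+$ letter of $lC$ were an auxiliary letter $t_j$, the availability condition defining $t_j$ would force a letter of $\P^-$ or a second letter of $\P^+$ into $lC$, a contradiction); but that repair does not touch the main gap above.
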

	
	\begin{proof} We only consider the case corresponding to $\alpha_p\in\Delta^{2i-1}$ and $1\le p\le n-1$, as the others are simpler. We use freely the structure of the chain of roots $\widetilde{\Gamma}^{2i-1}$, see \eqref{eqn:lambdachainC}. Recall that 
			$\alpha_p=\gamma_k=w(\beta_k)$ for the corresponding $w$ defined in \eqref{defw}. We have the following cases:
			\begin{enumerate}
			\item $\beta_k=(b,c)$ with $b\le \lambda_i'$, $c>\lambda_i'$, and $w(b)=p$, $w(c)=p+1$ (or $w(b)=\overline{p+1}$, $w(c)=\overline{p}$);
			\item $\beta_k=(b,\overline{c})$ with $b\le \lambda_i'$, $c>\lambda_i'$, and $w(b)=p$, $w(c)=\overline{p+1}$ (or $w(b)=\overline{p+1}$, $w(c)={p}$);
			\item $\beta_k=(b,\overline{c})$ with $c<b\le \lambda_i'$, and $w(b)=p$, $w(c)=\overline{p+1}$ (or $w(b)=\overline{p+1}$, $w(c)={p}$).
			\end{enumerate}
			
			We will only consider the first case with $w(b)=p$, $w(c)=p+1$, as the others are completely similar. We first claim that $w(b)=\pi_{2i-1}(b)$ (cf. Definition \ref{definition:fill}), i.e., the entry $p$ is not moved by the reflections $s_{\beta_{k'}}$ for $\beta_{k'}$ in $\widetilde{\Gamma}^{2i-1}$ with $k'>k$. These reflections are $(b,c')$ with $c'>c$, $(b,\overline{b})$, $(b,\overline{c'})$ with $c'>\lambda_i'$, and $(b,\overline{c'})$ with $c'<b$. So if the claim failed, the quantum Bruhat graph criterion in Proposition \ref{prop:quantum_bruhat_order_type_C}  would be violated, because the entry $p+1$ is still in position $c$ when these reflections are applied. 
			
			Let us now track the entry $p+1$ as we apply the subsequent reflections not involving position $b$, by freely using the quantum Bruhat graph criterion. If this entry is not moved by any of these reflections, then $\pi_{2i-1}(c)=p+1$, so $a_{2i-1}=\frac{1}{2}$. The first reflection which can move $p+1$ is of the form $(b',\overline{c})$ with $b'<b$, which means that in position $b'$ we will now have the entry $\overline{p+1}$. If this entry is not moved by any of the subsequent reflections $(b',\overline{c'})$ with $\lambda_i'<c'<c$, then it is not moved by any of the remaining reflections either, so $\pi_{2i-1}(b')=\overline{p+1}$, and $a_{2i-1}=1$. Otherwise, we will have the entry $p+1$ in a position $c'$ with $\lambda_i'<c'<c$, and the above reasoning can be applied again (a finite number of times). 
			
			Finally, the fact that if $a_{2i-1}=\frac{1}{2}$ then $a_{2i}=\frac{1}{2}$ was deduced in the proof of Proposition \ref{actfp2}.
			\end{proof}
			
	Recall Proposition $\ref{prop:rootF}$ and the notation therein. $M$ is the maximum
	of $g_{\alpha_p}$, and suppose $M> \delta_{p,0}$; then $\gamma_k=\alpha_p$ with 
	$k \not \in J$, $\sgn(\alpha_p)\l{k}=M-1$, 
	and if $m \ne \infty$ then $\gamma_m=\alpha_p$ with $m \in J$. 
	The following result is the analogue of Proposition \ref{proposition:max-correspondence}, and its proof is identical. Indeed, the following key fact is still true: if $a_i \ne  0$ then $\sgn(a_i) = \sgn(\pi_i^{-1}(\alpha_p))$, where $\pi_i$ is given in Definition \ref{definition:fillTypeC} (simply note that $a_i=\langle\pi_i(\omega_k),\alpha_p^\vee\rangle=\langle\omega_k,\pi_i^{-1}(\alpha_p^\vee)\rangle$, where $k=\lambda_{\left\lceil \frac{i}{2}\right\rceil}'$). This will be needed in the proof of Proposition \ref{proposition:root_matchingTypeC} as well.
	
		\begin{proposition}
			\label{proposition:max-correspondenceTypeC}
			We have $M\geq M'$. If $M \geq \delta_{p,0}$, then $M=M'$.
		\end{proposition}
		
		The following result is the analogue of Proposition \ref{proposition:root_matching}.

		\begin{proposition}
			\label{proposition:root_matchingTypeC}
			Assume that $M>\delta_{p,0}$, so $M=M'$ (by Proposition {\rm \ref{proposition:max-correspondenceTypeC}}) and $f_p(J)\ne \Bzero$. 
			If $a_{m'}=1$ then $\gamma_k\in \Delta^{m'}$, otherwise
			$\gamma_k \in \Delta^{m'-1}$. If $m \ne \infty$, 
			so $\gamma_m \in \Delta^{m''}$, then $a_i=0$ for $i \in (m',m'')$; if $m= \infty$, then  $a_i=0$ for $i>m'$.
		\end{proposition}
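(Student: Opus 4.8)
The plan is to follow the proof of the type $A$ counterpart, Proposition \ref{proposition:root_matching}, replacing the integer-step height function of type $A$ by the half-integer-step version forced by the splitting of KN columns. Throughout, write $\gamma_k \in \Delta^j$ for the segment containing the predecessor root $\gamma_k = \alpha_p$ supplied by Proposition \ref{prop:rootF}, and recall that $k \notin J$ and $\sgn(\alpha_p)\l{k} = M-1$.

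First I would translate the height datum into the language of the signature graph $h_\bullet$. By Lemma \ref{lemma:height_countingTypeC} we have $\sgn(\alpha_p)\l{k} = \inner{\ct(\sigma[j-1])}{\alpha_p^\vee} = h_{j-1}$, so $h_{j-1} = M-1$. Since $k \notin J$, Lemma \ref{lemma:chain_fillingTypeC} pins down the increment at $j$: if $j$ is even then $a_j = 1$ and hence $h_j = M$; if $j$ is odd then either $a_j = 1$ (again $h_j = M$) or $a_j = a_{j+1} = \tfrac12$, in which case $h$ climbs to $M$ only at $j+1$. In every case the graph first attains the value $M$ at $j$ or at $j+1$.

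Next I would split according to the value $a_{m'} \in \{\tfrac12, 1\}$, which is positive because $M' > 0$ (recall $M = M'$ by Proposition \ref{proposition:max-correspondenceTypeC}). When $a_{m'} = 1$, I would argue exactly as in type $A$ that $j = m'$: minimality of $m'$ gives $m' \le j$, where the half-step branch of Lemma \ref{lemma:chain_fillingTypeC} is excluded because it would force $a_{m'} = \tfrac12$; and if $m' < j$ one takes the least $t \in (m', j]$ with $a_t \neq 0$ and derives a contradiction, since $a_t > 0$ would push $h_t$ above $M$, while $a_t < 0$ would, via Lemma \ref{lemma:admissible2}, produce an intermediate root equal to $\alpha_p$ at height $M$, contradicting the extremality defining the index $m$. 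When $a_{m'} = \tfrac12$, Proposition \ref{proposition:KN_splitting_two} forces $m' = 2i$ and $a_{m'-1} = \tfrac12$, with $C_{m'-1}$ and $C_{m'}$ sharing the same single element of $\P^+$; then $j = m'$ is impossible, as $j$ even would force $a_{m'} = 1$ by Lemma \ref{lemma:chain_fillingTypeC}, whereas $h_{m'-2} = M-1$ matches $h_{j-1}$ at $j = m'-1$, and the same least-nonzero-$t$ contradiction rules out $j > m'-1$. This yields $\gamma_k \in \Delta^{m'-1}$.

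Finally, the vanishing statement $a_i = 0$ for $i$ strictly between $m'$ and $m''$ (respectively for all $i > m'$ when $m = \infty$) is proved by the identical least-index argument: a positive $a_i$ in that range would raise $h$ above its maximum $M$, while a negative one would, through Lemma \ref{lemma:admissible2}, insert a copy of $\alpha_p$ in a segment strictly between those carrying $\gamma_k$ and $\gamma_m$, contradicting that $k$ is the predecessor of $m$ in $\widehat{I}_{\alpha_p}$. The main obstacle is the bookkeeping around the half-integer jumps: unlike type $A$, a single KN column corresponds to the two split columns $\Delta^{2i-1}$ and $\Delta^{2i}$, so an occurrence of $\alpha_p$ can be spread across the split and contribute two half-steps. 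Propositions \ref{proposition:KN_splitting_one} and \ref{proposition:KN_splitting_two} are precisely what guarantee that in the $a_{m'} = \tfrac12$ case the relevant element of $\P^+$ appears in both halves, placing $\gamma_k$ in the left half $\Delta^{m'-1}$ rather than in $\Delta^{m'}$.
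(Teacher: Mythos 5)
Your proof is correct and takes essentially the same approach as the paper: the paper gives no separate argument for type $C$, saying only that Proposition \ref{proposition:root_matchingTypeC} is ``proved similarly'' to its type $A$ counterpart (Proposition \ref{proposition:root_matching}), and your argument is exactly that adaptation --- the least-nonzero-$t$ argument via maximality of $M'$ and Lemma \ref{lemma:admissible2}, with the half-integer steps controlled by Lemma \ref{lemma:chain_fillingTypeC} and Propositions \ref{proposition:KN_splitting_one}--\ref{proposition:KN_splitting_two}. The compressed case analysis in the $a_{m'}=1$ and $a_{m'}=\tfrac{1}{2}$ branches does cover all possible locations of $\gamma_k$ relative to $m'$, so there is no gap.
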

	
	\begin{proof}
			Assuming  $\gamma_k \in \Delta^j$, 
			by Proposition \ref{prop:rootF} (2), Lemma \ref{lemma:chain_fillingTypeC}, and Lemma \ref{lemma:height_countingTypeC}, we have  $\sgn(\alpha_p)\l{k}=M-1=h_{j-1}$, and either $a_j=1$ or $a_j=a_{j+1}=\frac{1}{2}$. In the first case, the rest of the proof is essentially identical to that of Proposition \ref{proposition:root_matching}; in particular, we show that $j=m'$. 
		In the second case, we have $j=2i-1$, so it follows that  
			$h_{2i}=M=M'$. Once again, essentially the same proof as that of Proposition \ref{proposition:root_matching} applies; in particular, we show that $2i=m'$. Note that in both situations we implicitly used the cases in Proposition \ref{actfp2}. 
			\end{proof}

	\begin{proof}[Proof of Theorem {\rm \ref{theorem:crystal_isomorphismTypeC}}]
	The proof is similar to that of Theorem  \ref{theorem:crystal_isomorphism}, so we only point out the extra complexity in type $C$. This has to do with showing that, if $b=\sfill(J)$ and $f_p(J)\ne \Bzero$, then
$f_p^2(b) = \sfill(f_p(J))$. We continue to use the notation from the above setup, and we consider only the case $1\le p\le n-1$, as the others are simpler. 

Since $f_p(b) \ne \Bzero$, we have $M'>0$, and $f_p^2$ acts on $b$ in one of the ways indicated in Proposition \ref{actfp2}, cases (i)-(iii). 
		Now let us turn to $f_p(J)$, and use the same setup as in the proof of Theorem \ref{theorem:crystal_isomorphism}, to which we refer. 
	By the first part of Proposition \ref{proposition:root_matchingTypeC}, we conclude that 
	$\sfill(f_p(J))$ is obtained from $\sfill(J)$ by applying $s_p$ to columns $i$ for  $i \in [m',m'')$ in cases (i)-(ii), resp. $i \in [m'-1,m'')$ in case (iii).
	By the second part of Proposition \ref{proposition:root_matchingTypeC}, this amounts to applying $s_p$ to column $m'$, resp. to columns $m'-1$ and $m'$. 
	By Proposition \ref{actfp2}, this is the same as the action of $f_p^2$ on $\sfill(J)$, which concludes the proof. Note that in the above reasoning we implicitly used Remark \ref{allpcases} (1).
	\end{proof}

\bibliographystyle{alpha}

\begin{thebibliography}{LNS{\etalchar{+}}12b}


\bibitem[BL]{balcst}
\newblock C. Briggs and C.~Lenart.
\newblock A charge statistic in type $B$.
\newblock In preparation.


\bibitem[BFP99]{bfpmbo}
F.~Brenti, S.~Fomin, and A.~Postnikov.
\newblock Mixed bruhat operators and yang-baxter equations for {W}eyl groups.
\newblock {\em International Mathematics Research Notices}, 8:419--441, 1999.




\bibitem[FSS07]{demazure_arrows}
G.~Fourier, A.~Schilling, and M.~Shimozono.
\newblock {D}emazure structure inside {K}irillov-{R}eshetikhin crystals.
\newblock {\em J. Algebra}, 309:386--404, 2007.

\bibitem[Ful97]{fulyt}
W.~Fulton.
\newblock {\em Young Tableaux}.
\newblock Cambridge University Press, 1997.

\bibitem[FW04]{fawqps}
W.~Fulton and C.~Woodward.
\newblock On the quantum product of {S}chubert classes.
\newblock {\em J. Algebraic Geom.}, 13:641--661, 2004.

\bibitem[GL05]{gallsg}
S.~Gaussent and P.~Littelmann.
\newblock {LS}-galleries, the path model and {MV}-cycles.
\newblock {\em Duke Math. J.}, 127:35--88, 2005.


\bibitem[HKO{\etalchar{+}}99]{hkorff}
G.~Hatayama, A.~Kuniba, M.~Okado, T.~Takagi, and Y.~Yamada.
\newblock Remarks on fermionic formula.
\newblock In {\em Recent developments in quantum affine algebras and related
  topics ({R}aleigh, {NC}, 1998)}, volume 248 of {\em Contemp. Math.}, pages
  243--291. Amer. Math. Soc., Providence, RI, 1999.

\bibitem[HK00]{hkqgcb}
J.~Hong and S.J. Kang.
\newblock {\em Introduction to {Q}uantum {G}roups and {C}rystal {B}ases}, volume~42 of
  {\em Graduate Studies in Mathematics}.
\newblock Amer. Math. Soc., 2000.


\bibitem[Hum90]{humrgc}
J.~E. Humphreys.
\newblock {\em {R}eflection {G}roups and {C}oxeter {G}roups}, volume~29.
\newblock Cambridge University Press, Cambridge, 1990.

\bibitem[Kas95]{kasscb}
M.~Kashiwara.
\newblock Similarity of crystal bases, {L}ie algebras and their
  representations.
\newblock (Seoul, 1995) {\em Contemp. Math.}, vol. 194, {\em Amer. Math. Soc.},
  Providence, RI, 1996, pp. 177 -- 186.

\bibitem[Kas91]{kascbq}
M.~Kashiwara.
\newblock On crystal bases of the {$q$}-analogue of universal enveloping
  algebras.
\newblock {\em Duke Math. J.}, 63:465--516, 1991.

\bibitem[KN94]{kancgr}
M.~Kashiwara and T.~Nakashima.
\newblock Crystal graphs for representations of the {$q$}-analogue of classical
  {L}ie algebras.
\newblock {\em J. Algebra}, 165:295--345, 1994.

\bibitem[KR90]{karrym}
A.~Kirillov and N.~Reshetikhin.
\newblock {R}epresentations of {Y}angians and multiplicities of the inclusion of
  the irreducible components of the tensor product of representations of simple
  {L}ie algebras.
\newblock {\em J. Sov. Math.}, 52:3156--3164, 1990.

\bibitem[LS79]{lassuc}
A.~Lascoux and M.-P. Sch\mbox{\"{u}}tzenberger.
\newblock Sur une conjecture de {H}. {O}. {F}oulkes.
\newblock {\em C. R. Acad. Sci. Paris \mbox{S\'e}r. I Math.}, 288:95--98, 1979.

\bibitem[Len07]{lenccg}
C.~Lenart.
\newblock On the combinatorics of crystal graphs, {I}. {L}usztig's involution.
\newblock {\em Adv. Math.}, 211:324--340, 2007.

\bibitem[Len10]{LenartHHL_height_counting}
C.~Lenart.
\newblock {H}aglund-{H}aiman-{L}oehr type formulas for {H}all-{L}ittlewood
  polynomials of type {$B$} and {$C$}.
\newblock {\em Algebra and Number Theory}, 4:887--917, 2010.

\bibitem[Len11]{LenartHHL}
C.~Lenart.
\newblock {H}all-{L}ittlewood polynomials, alcove walks and fillings of {Y}oung
  diagrams.
\newblock {\em Discrete Math.}, 311:258--275, 2011.

\bibitem[Len12]{Lenart}
C.~Lenart.
\newblock From {M}acdonald polynomials to a charge statistic beyond type {$A$}.
\newblock {\em J. Combin. Theory Ser. A}, 119:683--712, 2012.

\bibitem[LL]{lalurc}
\newblock  C.~Lenart and A. Lubovsky.
\newblock A uniform realization of the combinatorial $R$-matrix.
\newblock In preparation.

\bibitem[LNS{\etalchar{+}}12]{unialcmod}
C.~Lenart, S.~Naito, D.~Sagaki, A.~Schilling, and M.~Shimozono.
\newblock A uniform model for {K}irillov-{R}eshetikhin crystals {I}: {L}ifting
  the parabolic quantum {B}ruhat graph, 2012.
\newblock {\ttfamily arXiv:1211.2042}.
\newblock {To appear in {\em Int. Math. Res. Not.}}

\bibitem[LNS{\etalchar{+}}13a]{lnseda}
C.~Lenart, S.~Naito, D.~Sagaki, A.~Schilling, and M.~Shimozono.
\newblock Explicit description of the action of root operators on quantum
  {L}akshmibai-{S}eshadri paths.
\newblock {\tt arXiv:1308.3529}, 2013.
\newblock To appear in {\em Proceedings of the 5th Mathematical Society of
  Japan Seasonal Institute. Schubert Calculus}, Osaka, Japan, 2012.


\bibitem[LNS{\etalchar{+}}13b]{unialcmod2}
C.~Lenart, S.~Naito, D.~Sagaki, A.~Schilling, and M.~Shimozono.
\newblock A uniform model for {K}irillov-{R}eshetikhin crystals {II}: {P}ath
  models and {$P=X$}, 2013.
\newblock In preparation.
\newblock Extended abstract in {\em 25th International Conference on Formal Power Series and
  Algebraic Combinatorics (FPSAC 2013)}, Discrete Math. Theor. Comput. Sci.
  Proc. AS, pages 57--68, Paris, France, 2013.
\newblock {\ttfamily arXiv:1211.6019}.

\bibitem[LP07]{lapawg}
C.~Lenart and A.~Postnikov.
\newblock Affine {W}eyl groups in {$K$}-theory and representation theory.
\newblock {\em Int. Math. Res. Not.}, pages 1--65, 2007.
\newblock Art. ID rnm038.

\bibitem[LP08]{lapcmc}
C.~Lenart and A.~Postnikov.
\newblock A combinatorial model for crystals of {K}ac-{M}oody algebras.
\newblock {\em Trans. Amer. Math. Soc.}, 360:4349--4381, 2008.

\bibitem[LS11]{energy_charge}
C.~Lenart and A.~Schilling.
\newblock Crystal energy functions via the charge in types {$A$} and {$C$}.
\newblock {\em Math. Z.}, 273:401--426, 2013.

\bibitem[Lit94]{litlrr}
P.~Littelmann.
\newblock {A Littlewood-Richardson rule for symmetrizable Kac-Moody algebras}.
\newblock {\em Invent. Math.}, 116:329--346, 1994.

\bibitem[Lit95]{litpro}
P.~Littelmann.
\newblock Paths and root operators in representation theory.
\newblock {\em Ann. of Math. {\rm (2)}}, 142:499--525, 1995.


  
  \bibitem[NS08]{NS08}
S. Naito and D. Sagaki. 
\newblock Lakshmibai-Seshadri paths of level-zero weight shape and 
one-dimensional sums associated to level-zero fundamental representations.
\newblock {\it Compos. Math.}, 144:1525--1556, 2008.

  
\bibitem[NY97]{naycharge}
A.~Nakayashiki and Y.~Yamada.
\newblock Kostka polynomials and energy functions in solvable lattice models.
\newblock {\em Selecta Math. (N.S.)}, 3:547--599, 1997. 






\bibitem[RY11]{raycfm}
A.~Ram and M.~Yip.
\newblock A combinatorial formula for {M}acdonald polynomials.
\newblock {\em Adv. Math.}, 226:309--331, 2011.


\bibitem[She99]{shsjdt}
J.~T. Sheats.
\newblock A symplectic jeu de taquin bijection between the tableaux of {K}ing and
  of {D}e {C}oncini.
\newblock {\em Trans. Amer. Math. Soc.}, 351:3569--3607, 1999.

\bibitem[ST12]{KRcrystals_energy}
A.~Schilling and P.~Tingley.
\newblock Demazure crystals, {K}irillov-{R}eshetikhin crystals, and the energy
  function.
\newblock {\em Electronic J. Combin.}, 2012.

\bibitem[Yam98]{yampfg}
Y.~Yamane.
\newblock Perfect crystals of $U_q(G_2^{(1)})$.
\newblock {\em J. Algebra}, 210:440--486, 1998.



\end{thebibliography}

\newcommand{\etalchar}[1]{$^{#1}$}

\end{document}